\theoremstyle{definition}
\newtheorem{exmp}{Example}[section]
\newcommand{\SC}{{\mathcal{C}}}
\newcommand{\SV}{{\mathcal{V}}}
\newcommand{\Z}{\mathbb{Z}}
\newcommand{\D}{\mathbb{D}}
\newcommand{\CP}{\mathbb{CP}}
\newcommand{\s}{\mathbb{S}}
\newtheorem{proposition}{Proposition}[section]
\newtheorem{theorem}[proposition]{Theorem}
\newtheorem{definition}[proposition]{Definition}
\newtheorem{lemma}[proposition]{Lemma}
\newtheorem{corollary}[proposition]{Corollary}
\newtheorem{remark}[proposition]{Remark}
\newtheorem{question}[proposition]{Question}
\begin{document}
	
	\title{Surfaces in $4$-manifolds and extendible mapping classes }
	
	\subjclass{Primary: 57R52, 57K20. Secondary: 57R15.}
	
	\keywords{Surface, mapping class, 4-manifold, open book}

	\author{Shital Lawande}
	\address{IAI TCG CREST Kolkata, and Ramkrishna Mission Vivekananda Education and Research Institute, Belur Math}
	\email{shital.lawande@tcgcrest.org}
	
	\author{Kuldeep Saha}
	\address{IAI TCG CREST Kolkata, and Academy of Scientific and Innovative Research, Ghaziabad}
	\email{kuldeep.saha@gmail.com, kuldeep.saha@tcgcrest.org}

	\begin{abstract}
		
		We study smooth proper embeddings of compact orientable surfaces in compact orientable $4$-manifolds and elements in the mapping class group of that surface which are induced by diffeomorphisms of the ambient $4$-manifolds. We call such mapping classes \emph{extendible}. An embedding for which all mapping classes are extendible is called \emph{flexible}. We show that for most of the surfaces there exists no flexible embedding in a $4$-manifold with homology type of a $4$-ball or of a $4$-sphere. As an application of our method, we address a question of Etnyre and Lekili and show that there exists no simple open book decomposition of $\s^5$ with a spin page where all $3$-dimensional open books admit open book embeddings. We also provide many constructions and criteria for extendible and non-extendible mapping classes, and discuss a connection between extendibility and sliceness of links in a homology $4$-ball with $\s^3$ boundary. Finally, we give a new generating set of the group of extendible mapping classes for the trivial embedding of a closed genus $g$ surface in $\s^4$, consisting of $3g$ generators. This improves a previous result of Hirose giving a generating set of size $6g-1$.   
		
	\end{abstract}
	
	\maketitle
	
	\section{Introduction}
	
	We study smooth embeddings of orientable surfaces in $4$-manifolds and elements in the mapping class group of that surface which are induced by diffeomorphisms of the ambient $4$-manifolds. We call such mapping classes \emph{extendible} with respect to the given embedding. Embeddings for which the whole mapping class group is extendible, are called \emph{flexible}. The problem of extendibility originated in the work of Montesinos \cite{Mont}, who considered the case of an unknotted torus in $\s^4$ to construct a diffeomorphism of $\s^4$ known as the  \emph{Montesinos twist}. The general question of extendibility/flexibility for closed oriented surfaces was further studied by Hirose \cite{hirose0} \cite{hirose1} \cite{hirose2} and Hirose--Yasuhara \cite{hy}. Later, a similar problem for surfaces with boundary came up in the context of codimension $2$ \emph{open book embedding}(or \emph{spun embedding}) of smooth and contact $3$-manifolds.
	
	\noindent An open book decomposition of a manifold $M^m$ is a pair $(V^{m-1},h)$, such that $M^m$ is diffeomorphic to the quotient space $\mathcal{MT}(V^{m-1}, h) \cup_{id} \partial V^{m-1} \times D^2$. Here, $V^{m-1}$, called \emph{page}, is a manifold with boundary. The map $h$, called \emph{monodromy}, is a diffeomorphism of $V^{m-1}$ that restricts to identity near the boundary $\partial V$, and $\mathcal{MT}(V^{m-1}, h)$ denotes the mapping torus of $h$. We denote such an open book
	by $\textrm{OB}(V,h)$. It is well known that every closed, orientable, odd dimensional manifold admits open book decomposition. An embedding $f$ of $\textrm{OB}(V_1,h_1)$ in $\textrm{OB}(V_2,h_2)$ is called an \emph{open book embedding}, if $f$ restricts to a proper embedding of $V_1$ in $V_2$ and $h_2 \circ f = f \circ h_1$ (up to isotopy).  
	
	\noindent Let $\Sigma_{g,b}$ denote a genus $g$ surface with $b$ boundary components and let $\tilde{f}$ be a proper embedding of $\Sigma_{g,b}$ in a compact $4$-manifold $(V^4, \partial V^4)$. If $\phi$ is a diffeomorphism of $\Sigma_{g,b}$ that can be induced via a boundary preserving isotopy of $V^4$, then $\textrm{OB}(\Sigma_{g,b}, \phi)$ open book embeds in $\textrm{OB}(V^4,id.)$. Thus, $\phi$ being extendible with respect to $\tilde{f}$ is a \emph{necessary} condition for the existence of such an open book embedding. The study of codimension $2$ open book embedding of closed $3$-manifolds has received much attention in recent times. In particular, many codimension $2$ open book embeddings have been constructed for $3$-manifolds, by the works of Etnyre--Lekili \cite{EL}, Pancholi--Pandit--Saha \cite{pps} and Saha \cite{Saha}. It is known that there are $5$-dimensional open books where every open book decomposition of a $3$-manifold admits an open book embedding. However, for topologically simpler $5$-manifolds (eg. integral homology $5$-sphere) such results are still unknown. Etnyre and Lekili have asked the following question.

	\begin{question}[Qn. $3.5$ in \cite{EL}]\label{qn1}
		Is there an open book decomposition of $S^5$ where all open book decompositions of a closed orientable $3$-manifold embed?
	\end{question}
	
	\noindent We call such an open book decomposition a \emph{universal open book} of $\s^5$. In particular, one may ask whether the trivial open book on $\s^5$, $\textrm{OB}(\D^4,id)$, is an universal open book. In terms of extendibility of mapping classes, this is equivalent to the following question.
	
	\begin{question}\label{qn2}
		Does there exist a flexible proper embedding of an orientable bounded surface in $(\D^4,\partial \D^4)$?
	\end{question}
	
	We provide a complete answer to question \ref{qn2} by showing the following.

	\begin{theorem} \label{thm1}
		
		 Let $W^4$ be an orientable compact $4$-manifold with boundary (possibly empty). Assume that either $W$ is an integral homology $4$-ball with boundary an integral homology $3$-sphere, or $W$ is an integral homology $4$-sphere. For $g \geq 0$ and $b \geq 0$, let $\Sigma_{g,b}$ denote a compact orientable surface of genus $g$ with $b$ boundary components. 
		
		\begin{enumerate}
			\item For $g \ge 1$ and $b \neq 1$, there exists no proper flexible embedding of $(\Sigma_{g,b}, \partial \Sigma_{g,b})$ in $(W,\partial W )$.
			
			\item For $b \geq 3$ there exists no proper flexible embedding of $(\Sigma_{0,b}, \partial \Sigma_{0,b})$ in $(W,\partial W )$.
		\end{enumerate}

	\end{theorem}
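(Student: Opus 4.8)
The plan is to attach to any proper embedding $\tilde f\colon \Sigma_{g,b}\hookrightarrow (W,\partial W)$ an algebraic invariant of the embedding that must be preserved by every extendible mapping class, and then, under the stated hypotheses on $g$ and $b$, to exhibit a mapping class violating it. The natural candidate is a $\Z/2$-valued quadratic refinement $q\colon H_1(\Sigma_{g,b};\Z/2)\to \Z/2$ of the mod-$2$ intersection pairing $x\cdot y$ on the surface. Once $q$ is in hand, the key algebraic fact is the transvection identity: if $t_c$ denotes the action on homology of the Dehn twist along a simple closed curve $c$, then $q(t_c x)=q(x)+(x\cdot c)\,\bigl(q(c)+1\bigr)$ in $\Z/2$. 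Thus for non-separating $c$ the twist $t_c$ preserves $q$ precisely when $q(c)=1$, so any Dehn twist about a curve $c$ with $q(c)=0$ moves $q$ and hence cannot be extendible. Reducing flexibility to a statement about the image of the mapping class group in the automorphism group of $(H_1;\,\cdot)$, I want to conclude that the stabilizer $O(q)$ of $q$ is a proper subgroup, which is impossible if every class is extendible.

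The construction of $q$ is where the homological hypotheses on $W$ enter, and I expect this to be one of the two main technical points. Because $W$ is an integral homology $4$-ball (or $4$-sphere), $H_1(W;\Z)=0$, so every simple closed curve $c\subset\Sigma$ is null-homologous in $W$ and bounds a surface $F_c\subset W$. I would define $q(c)$ as the mod-$2$ framing defect measuring the difference between the framing of $c$ induced by the surface $\Sigma$ and the framing induced by $F_c$ within the normal data of $c$ in $W$; the vanishing of the relevant homology of $W$ guarantees independence of the chosen $F_c$ and yields the refinement property $q(x+y)=q(x)+q(y)+x\cdot y$. An ambient isotopy $F$ of $W$ realizing $\phi$ carries bounding surfaces to bounding surfaces and respects the normal framings, so $q(\phi_* x)=q(x)$ for every extendible $\phi$; geometrically, $t_c$ extendible forces $c$ to bound an appropriately framed disk in $W$, which is exactly the condition $q(c)=0$ read off from the linking data. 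This is also the point of contact with sliceness of links in a homology $4$-ball.

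With $q$ available, the case $g\ge 1$, $b=0$ is immediate: $\mathrm{MCG}(\Sigma_g)$ surjects onto $\mathrm{Sp}(2g,\Z/2)$, whose orthogonal subgroup $O(q)$ is proper for $g\ge 1$, so some mapping class is non-extendible and no flexible embedding exists. For $b\ge 2$ the pairing $x\cdot y$ is degenerate with radical spanned by the boundary classes; here I would pass to a genuine symplectic pair $(a_1,b_1)$ supported away from the boundary, note that the mapping class group still realizes the corresponding transvections, and run the same argument on the nondegenerate part. The delicate cases are $g=1$ and the genus-zero case $b\ge 3$, and I expect this to be the principal obstacle. For an Arf-invariant-one form at $g=1$ the group $\mathrm{Sp}(2,\Z/2)$ already coincides with $O(q)$, so the mod-$2$ argument is blind and must be replaced by a finer, integral linking form whose automorphism group is substantially smaller than the symplectic image of the mapping class group. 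For $\Sigma_{0,b}$ the intersection pairing vanishes identically and all Dehn twists act trivially on $H_1$, so no homological invariant can detect non-extendibility; instead I would track the interior separating twists through their effect on the framing and linking of the nested boundary curves, reducing non-extendibility of such a twist to a linking obstruction (or non-sliceness) for the corresponding sublink of $\partial\Sigma\subset\partial W$, which is precisely where the hypothesis $b\ge 3$ becomes essential.
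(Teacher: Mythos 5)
Your core mechanism coincides with the paper's: the quadratic refinement you describe is exactly the Rokhlin quadratic form $q_f$ of a characteristic embedding (being characteristic is automatic here, since the homology hypotheses on $W$ kill $H_2(W;\mathbb{Z}_2)$), and the transvection identity $q(\tau_c x)=q(x)+(x\cdot c)\,(q(c)+1)$ together with invariance of $q_f$ under extendible classes is precisely part (a) of the paper's Lemma \ref{mainlemma}: if $\tau_c$ is extendible and $c$ is essential, then $q_f([c])=1$. (Incidentally, your later sentence asserting that extendibility of $t_c$ corresponds to ``$q(c)=0$'' contradicts your own transvection argument; the correct condition is $q(c)=1$, as for the core of a Hopf band.) This correctly disposes of every case with $g\geq 2$. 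But in the cases you yourself flag as delicate, your proposal stops being a proof, and the missing ideas are concrete; they constitute most of the actual content of the paper's argument.

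For the closed torus in an integral homology $4$-sphere, the resolution is not an unspecified ``finer integral linking form'' (you propose no construction, and none is needed); it is Rokhlin's theorem: since the embedding is characteristic with $[\Sigma]\cdot[\Sigma]=0$ and $\sigma(W)=0$, one gets $\mathrm{Arf}(q_f)=q_f([m])\,q_f([l])=0$, so $q_f$ vanishes on $m$ or on $l$ and the corresponding twist violates $q_f$-invariance --- the Arf-one case you worry about simply cannot occur. For surfaces with boundary your plan fails twice. First, for $g=1$, $b\geq 2$, restricting to a symplectic pair supported away from the boundary proves nothing, because the restricted form can have Arf invariant one: the plumbing of two Hopf bands realizes $q(a)=q(b)=1$, and $\Sigma_{1,1}$ is genuinely flexible in $\mathbb{D}^4$, so no argument that only sees a symplectic pair can succeed. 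Second, for $g=0$, $b\geq 3$, your claim that no homological invariant can detect non-extendibility is false, and your linking/sliceness alternative is a restatement of the problem rather than an argument. What is missing in both cases is the paper's doubling trick, part (b) of Lemma \ref{mainlemma}: double $W$ along its integral homology sphere boundary to get an integral homology $4$-sphere $DW$, and double $\Sigma_{g,b}$ to a closed surface; a boundary-parallel curve $\gamma$ becomes non-separating in the double, extendibility of $\tau_\gamma$ passes to the doubled embedding, and the essential-curve argument then forces $q_f([\gamma])=1$ as well. With boundary-parallel twists constrained in this way, the proof is finished by exhibiting three disjoint curves, each essential or boundary-parallel, with $[\gamma]=[\gamma_1]+[\gamma_2]$ in $H_1(\Sigma;\mathbb{Z}_2)$, whence $q_f([\gamma])=q_f([\gamma_1])+q_f([\gamma_2])$ and at least one of the three values vanishes, contradicting flexibility. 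Without Rokhlin's theorem and the doubling argument, your proposal establishes the theorem only for $g\geq 2$.
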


	\noindent In fact, only $\Sigma_{1,1}$ and $\Sigma_{0,2}$ admit proper flexible embedding in $\D^4$. Examples of such embeddings are given in section \ref{sec4}. See Proposition \ref{pr1} and example \ref{flex1}. The cases of $\s^2$ and $\D^2$ are trivial.
	
	\vspace{0.1cm}
	
	We call an open book decomposition of $\s^5$ \emph{simple} if it has a connected binding and a simply connected page. As an application of our method to prove Theorem \ref{thm1}, we answer question \ref{qn1} for all simple open book decompositions of $\s^5$ with a spin page.

	\begin{theorem}\label{thm1.5}
		
		There exists no universal simple open book decomposition of $\s^5$ with a spin page.	
		
	\end{theorem}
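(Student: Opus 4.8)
The plan is to suppose that some simple open book $\textrm{OB}(V,h)$ of $\s^5$ with spin page $V$ is universal and to contradict this using the spin‑theoretic mechanism behind Theorem \ref{thm1}. Unwinding the definition of open book embedding, universality means that for every surface $\Sigma_{g,b}$ and every mapping class $\psi$ rel $\partial\Sigma_{g,b}$ there is a proper embedding $f_\psi\colon\Sigma_{g,b}\hookrightarrow V$ with $h\circ f_\psi\simeq f_\psi\circ\psi$. I would restrict to the pairs $(g,b)$ of Theorem \ref{thm1}. One cannot simply quote that theorem: the page of such an open book need not be a homology ball -- for instance $(\s^2\times\s^2)\setminus\D^4$ can arise as a spin page -- so its homological hypotheses fail and only the spin argument survives.

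Since $V$ is simply connected, $H^1(V;\Z/2)=0$, so $V$ carries a \emph{unique} spin structure. Each proper embedding $f$ of $\Sigma_{g,b}$ then yields, via the normal framing measured against this spin structure, a $\Z/2$‑valued quadratic refinement $q_f$ of the mod‑$2$ intersection form on $H_1(\Sigma_{g,b};\Z/2)$. As $h$ is a diffeomorphism of $V$ it fixes the unique spin structure, hence preserves the quadratic form attached to any embedded surface; combining $q_{h\circ f}=q_f$ with $h\circ f_\psi\simeq f_\psi\circ\psi$ and $q_{f_\psi\circ\psi}=q_{f_\psi}\circ\psi_{*}$ gives $q_{f_\psi}\circ\psi_{*}=q_{f_\psi}$. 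Thus every realizable $\psi$ lies in the stabilizer of the form of \emph{its own} embedding.

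The crux is to decouple this from the embedding, and this is exactly the imported content of Theorem \ref{thm1}. Two invariants of $q_f$ are pinned by the fixed data. First, because the embedding is an open book embedding, $\partial\Sigma_{g,b}$ sits in the binding $\partial V$ along the canonical page framing, so the values of $q_f$ on the boundary classes are prescribed. Second, because $V$ is spin its intersection form is even, so tubing $f$ along a class in $H_2(V)$ changes $q_f$ only by an even self‑intersection and leaves the Arf invariant of the genus part unchanged. Hence all admissible $q_f$ share the same Arf invariant and boundary framings, i.e.\ they fill a single orbit of the finite image of the mapping class group in the symplectic group of $H_1(\Sigma_{g,b};\Z/2)$. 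For $g\ge1$, $b\neq1$ this orbit has proper stabilizer, so the realizable classes lie in a union of conjugates of one proper subgroup; by Jordan's theorem a finite group is never such a union, producing a mapping class $\psi_0$ realized by no $f$ -- the contradiction.

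For the planar case $g=0$, $b\ge3$ the mapping class group acts trivially on $H_1$, so the symplectic obstruction is vacuous; here I would instead invoke the self‑linking/Arf‑type obstruction from the proof of Theorem \ref{thm1} (compare the link‑sliceness connection noted in the introduction), again pinned by the open book and the spin structure, which a suitable product of Dehn twists about curves separating three boundary components violates. The main obstacle throughout is precisely the decoupling of the third paragraph -- forcing the Arf invariant and boundary framings of $q_f$ to depend on the fixed spin page rather than on $f$ -- and this is where both hypotheses are indispensable: ``spin page'' makes the form even (so Arf is tubing‑invariant and canonical), while ``simple'' supplies the unique spin structure and the single connected binding along which the boundary framings are controlled. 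Consistently, the surfaces $\Sigma_{1,1}$ and $\Sigma_{0,2}$ excluded in Theorem \ref{thm1} are exactly those for which the pinned stabilizer already exhausts the mapping class group, so no contradiction arises and flexible embeddings exist.
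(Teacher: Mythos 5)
Your instinct to run the proof through the spin/Rokhlin quadratic form is indeed the engine of the paper's argument, and your observation that Theorem \ref{thm1} cannot simply be quoted (the page need not be a homology ball) is correct. But the proposal has two genuine gaps. First, $q_f$ does not exist for every proper embedding: well-definedness of the membrane construction (independence of the choice of $D$ in $q_f(x)=D\cdot f(\Sigma)+\mathcal{O}(D)$) requires $f$ to be \emph{characteristic}, which for spin $V$ means $f_*[\Sigma,\partial\Sigma]=0$ in $H_2(V,\partial V;\Z_2)$. A spin simple page can have large second homology (e.g. $(\s^2\times\s^2\#\s^2\times\s^2)\setminus \D^4$), so embeddings with $f_*[\Sigma,\partial\Sigma]\neq 0$ genuinely occur, and your argument never touches them. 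The paper handles exactly these by a completely different tool, Kauffman's Lemma \ref{kauf}: for a simple open book of $\s^5$ the variation map $\Delta_\phi=(id_V)_*-\phi_*$ is an isomorphism, so $\phi_*$ fixes no nonzero class $f_*[\Sigma,\partial\Sigma]$, hence such an embedding realizes no monodromy at all. This entire branch, and its key lemma, is absent from your proposal.

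Second, the step you yourself flag as the crux --- that all admissible $q_f$ share one Arf invariant, so the realizable classes lie in conjugates of a single proper subgroup of $Sp(2g,\Z_2)$ --- is unproved and is in fact false. Two embeddings are not in general related by tubing along $H_2(V)$. Concretely, take $V=\D^4$, the page of the trivial simple spin open book $\textrm{OB}(\D^4,id)$ of $\s^5$: the Seifert-type embeddings of $\Sigma_{g,1}$ coming from the standard genus-$g$ Seifert surface of the unknot and from a stabilized genus-$g$ Seifert surface of the trefoil are both characteristic, both realize the identity monodromy (so both are ``admissible'' in your sense), yet have $\mathrm{Arf}(q_f)=0$ and $\mathrm{Arf}(q_f)=1$ respectively, since the trefoil's band cores carry odd framings. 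Once both Arf invariants occur, the Jordan-theorem step yields nothing: every element $\sigma$ of $Sp(2g,\Z_2)$ preserves \emph{some} quadratic refinement (the defect $x\mapsto q_0(\sigma x)+q_0(x)$ is linear and vanishes on $\ker(\sigma-\mathrm{id})$, so a $\sigma$-fixed form always exists), and hence no mapping class is excluded and the contradiction evaporates; your planar case is likewise only gestured at. By contrast, the paper never attempts such a decoupling: in its Case 1 it fixes a characteristic embedding $f$, produces via Lemma \ref{mainlemma} a curve $\gamma$ with $q_f([\gamma])=0$, and concludes that $\tau_\gamma$ is not $f$-extendible, combining this with the Case 2 argument above.
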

	
	\noindent The open books $\textrm{OB}(\D^4,id)$, $\textrm{OB}(DT^*S^2, \tau_2)$ and $\textrm{OB}((\s^2 \times \s^2  \#  \s^2 \times \s^2) \setminus \D^4, h_0)$ of $\s^5$ appear often in the study of low dimensional manifolds and contact topology. Here, $DT^*S^2$ is diffeomorphic to the unit $\D^2$-bundle over $\s^2$ with euler number $-2$ and $\tau_2$ denotes the Dehn-Seidel twist. For more details on this open book we refer to \cite{vanko} and \cite{Saha}. For the open book $\textrm{OB}(\s^2 \times \s^2 \# \s^2 \times \s^2 \setminus \D^4, h_0)$ we refer to \cite{Sk1} and \cite{Sk2}. By Theorem \ref{thm1.5}, none of these three open books are universal. In fact, the following two theorems of Saeki show the vastness of examples for simple open book decompositions of $\s^5$ with spin pages.
    
    \begin{theorem}(Theorem $6.1$ in \cite{Sk1}) Let $M$ be a closed orientable connected $3$-manifold. There exists a simple open book decomposition of $\s^5$ with a spin page bounding $M$. 
    
    \end{theorem}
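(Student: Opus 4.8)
The plan is to reduce the whole statement to a homology computation and then invoke the vanishing of the group of homotopy $5$-spheres. Since $\Theta_5=0$, a closed simply connected $5$-manifold with the homology of $\s^5$ is diffeomorphic to $\s^5$, so it suffices to find a simply connected spin page $V$ with $\bd V=M$ together with a boundary-fixing monodromy $h$ such that $\ob(V,h)$ is a homology sphere. The key preliminary observation is that when $V$ is simply connected the total space is automatically simply connected: the mapping torus has $\pi_1\iso\pi_1(V)\rtimes\Z\iso\Z$, and gluing the binding neighborhood $\bd V\x D^2$ kills this $\Z$ since a meridian of the binding bounds a disk. Hence, by Poincar\'e duality on the closed oriented $5$-manifold $N=\ob(V,h)$ one has $H_4(N)\iso H^1(N)=0$ and $H_3(N)\iso H^2(N)\iso\Hom(H_2(N),\Z)$, so the entire problem collapses to arranging $H_2(N)=0$. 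A Wang/Mayer--Vietoris computation for the open book identifies $H_2(N)$ as a quotient of $\coker\!\left(h_*-\Id\colon H_2(V)\to H_2(V)\right)$ by the images of $H_2(\bd V)$ and of the ``torus classes'' coming from $H_1(\bd V)$; thus I only need a monodromy whose induced map is ``unimodular enough''.

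For the existence of the page, I would use spin bordism. Every closed orientable $3$-manifold is spin, and $\Omega^{\mathrm{Spin}}_3=0$, so $M$ bounds a compact spin $4$-manifold $V_0$. Performing surgery in the interior of $V_0$ along a finite generating set of $\pi_1(V_0)$, using spin framings so that the spin structure extends, yields a simply connected spin $4$-manifold — still with $\bd V_0=M$ — whose second homology is free. Its intersection form $Q_{V_0}$ is in general degenerate, with radical equal to the image $\im\!\left(H_2(M)\to H_2(V_0)\right)$.

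The construction of the monodromy is the heart of the argument, and I would phrase it through Lefschetz fibrations. Viewing $V\x D^2$ as a trivial fibration over $D^2$ and attaching $6$-dimensional $3$-handles along a collection of framed vanishing $2$-spheres $\Sigma_1,\dots,\Sigma_m\subset V$, each with tubular neighborhood modelled on $DT^{*}\s^{2}$, the resulting monodromy $h$ is the product of the corresponding Dehn--Seidel twists, and by the Picard--Lefschetz formula $h_*$ is the product of the reflections $x\mapsto x+\la x,[\Sigma_i]\ra[\Sigma_i]$ on $H_2(V)$. The open book is then a homology sphere precisely when the intersection form is realized as a symmetrized unimodular ``Seifert form'' $L+L^{T}$ with $\det L=\pm1$, equivalently when $h_*-\Id$ is an isomorphism on the nondegenerate quotient $H_2(V)/\mathrm{rad}\,Q_V$. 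A fixed $V$ need not admit such a monodromy — for instance the rank-two hyperbolic form has too small an isometry group — so I would first stabilize. Taking the interior connected sum $V=V_0\#\,k(\s^2\x\s^2)$ leaves $\bd V=M$ unchanged, preserves spin and simple connectivity, and enlarges the form by $k$ hyperbolic summands; for $k$ large enough one realizes the stabilized intersection form as $L+L^{T}$ with $L$ unimodular, and the associated configuration of framed $2$-spheres produces the desired monodromy, forcing $\coker(h_*-\Id)$, and hence $H_2(N)$, to vanish.

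The main obstacle I expect is precisely this stabilized realization step: one must show that after adding sufficiently many $\s^2\x\s^2$ summands the (possibly degenerate) intersection form admits a unimodular Seifert splitting that is geometrically realized by an embedded sphere configuration fixing $\bd V=M$, and that the degenerate directions — carrying the linking form of $M$ through the radical $\im(H_2(M)\to H_2(V))$ together with the torus classes from $H_1(M)$ — are absorbed in the quotient so that $H_2(N)$ genuinely dies. Granting this, $N=\ob(V,h)$ is a simply connected closed $5$-manifold with the homology of a sphere, hence a homotopy $5$-sphere, hence $N\iso\s^5$ because $\Theta_5=0$. Since the page $V=V_0\#\,k(\s^2\x\s^2)$ is simply connected and spin and the binding $\bd V=M$ is connected, $\ob(V,h)$ is a simple open book decomposition of $\s^5$ with spin page bounding $M$, as required.
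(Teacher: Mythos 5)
First, a point of reference: the paper does not prove this statement at all --- it is quoted verbatim as Theorem $6.1$ of Saeki \cite{Sk1} and used only as a source of examples to which Theorem \ref{thm1.5} applies. So your proposal can only be measured against Saeki's argument and against internal correctness. Your scaffolding (spin bordism $\Omega_3^{\mathrm{Spin}}=0$ plus surgery to get a simply connected spin page with boundary $M$, stabilization by $\s^2\times\s^2$, monodromy built from Dehn--Seidel twists, the homology-sphere criterion for open books, and $\Theta_5=0$ to upgrade a homotopy sphere to $\s^5$) is indeed the standard framework and is close in spirit to Saeki's. But the write-up has a genuine gap, in two linked places, and both sit exactly where the theorem is hard, namely when $M$ is not a homology sphere.

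First, your homological criterion is stated incorrectly. The total space $\ob(V,h)$ is a homotopy sphere if and only if the \emph{variation map} $\Delta_h\colon H_2(V,\bd V;\Z)\to H_2(V;\Z)$ is an isomorphism (this is Kauffman's lemma, the same Lemma \ref{kauf} the paper invokes for Theorem \ref{thm1.5}); this is \emph{not} equivalent to $h_*-\Id$ being an isomorphism on $H_2(V)/\mathrm{rad}\,Q_V$. The two lattices $H_2(V)/\mathrm{rad}\,Q_V$ and $H_2(V,\bd V)$ differ by exactly $H_1(M)$, and that difference cannot be ``absorbed'': for $V=\s^2\times\D^2$ your quotient condition is vacuous for every $h$, yet no monodromy makes $\ob(V,h)$ a sphere (e.g.\ $\ob(V,\Id)=\s^2\times\s^3$), because the rank-one radical admits no unimodular Seifert splitting $L+L^{T}$. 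Relatedly, your Mayer--Vietoris description of $H_2(N)$ omits the $H_2(\bd V\times\D^2)$ summand and the subgroup coming from $H_1(\bd V\times\s^1)$ --- precisely the terms carrying $H_*(M)$. Second, the step you explicitly ``grant'' --- that after sufficiently many $\s^2\times\s^2$ stabilizations there is a configuration of embedded $\pm2$-spheres whose product of twists has invertible variation, with the radical and the linking form of $M$ handled --- is not a technical loose end; it \emph{is} the theorem. Note that even the algebra is delicate: the rank-two hyperbolic form $H$ admits no unimodular splitting $L+L^{T}$ (one checks $\det L=b^2-b\neq\pm1$), while $H\oplus H$ does, so stabilization is genuinely necessary and the existence of suitable splittings compatible with the degenerate part must be proved, then realized geometrically. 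As it stands, your argument reduces Saeki's theorem to an unproven assertion essentially equivalent to it, so it is not a complete proof.
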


    \begin{theorem}(Theorem $4.1$ in \cite{Sk2}) Let $V$ be a smooth compact simply connected $4$-manifold with nonempty connected boundary. If $V$ is spin, then there exists a simple open book decomposition of $\s^5$ with page diffeomorphic to $V \# k(\s^2 \times \s^2)$ for some nonnegative integer $k$.
    	
    \end{theorem}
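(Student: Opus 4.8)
The plan is to realize the desired open book as the boundary of an (achiral) Lefschetz fibration over $\D^2$ whose total space is a homotopy $6$-ball. Recall that a Lefschetz fibration $\pi : E^6 \to \D^2$ with regular fiber a compact $4$-manifold $F$ (with boundary) has boundary $\partial E = (\partial F \times \D^2) \cup \mathcal{MT}(F,h)$, where $h$ is the product of the Dehn twists about the vanishing cycles; that is, $\partial E = \ob(F,h)$. Thus it suffices to build such an $E$ with fiber $F = V \# k(\s^2 \times \s^2)$ and $\partial E \cong \s^5$. Since connected sum in the interior changes neither $\partial V$ nor simple connectivity, $F$ is simply connected with connected boundary $\partial F = \partial V$, so the resulting open book is automatically simple. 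To force $\partial E \cong \s^5$ I would arrange $E$ to be contractible; then $\partial E$ is a homology $5$-sphere, and because it is simply connected (a van Kampen computation shows that an open book with connected binding and simply connected page is simply connected) it is a homotopy $5$-sphere, hence $\s^5$ by the $5$-dimensional Poincar\'e conjecture. No contact or symplectic condition is imposed, so I may use an \emph{achiral} fibration and allow vanishing cycles of either sign.

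Next I would reduce contractibility of $E$ to a statement about the intersection form of $F$. A compact simply connected $4$-manifold with connected boundary is homotopy equivalent to a wedge of $2$-spheres (it admits a handle decomposition with a single $0$-handle and only $2$-handles), so $F \times \D^2 \simeq F \simeq \bigvee_{r+2k} \s^2$ with $H_2(F) \cong \Z^{\,r+2k}$, where $r = \rk H_2(V)$. Each Lefschetz critical point is modeled on $z_1^2 + z_2^2 + z_3^2$ (or its orientation-reversed version in the achiral case), so every vanishing cycle is an embedded sphere of self-intersection $\pm 2$ in the fiber, and passing its critical value attaches a $3$-handle to $F \times \D^2$ along that sphere. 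Building $E$ from $F \times \D^2$ by such $3$-handles, the cellular boundary map $\partial_3 : \Z^{\#} \to H_2(F)$ records the homology classes of the vanishing cycles; $E$ is contractible precisely when $\partial_3$ is an isomorphism, i.e. when the vanishing cycles represent an integral basis of $H_2(F)$ by embedded $(\pm 2)$-spheres.

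The heart of the argument is therefore to find, after stabilizing, an integral basis of $H_2(V \# k(\s^2 \times \s^2))$ represented by embedded spheres of self-intersection $\pm 2$. This is where the spin hypothesis enters: $V$ spin forces the intersection form $Q_V$ to be even, and the stabilized form $Q_V \oplus kH$ (with $H$ the hyperbolic plane) is an even form carrying many isotropic vectors. I would use those hyperbolic summands to convert the even classes of $Q_V$ into classes of square $\pm 2$ and to produce an entire basis of such classes once $k$ is large enough (for instance, in a single copy of $H = \langle e,f\rangle$ the class $e-f$ has square $-2$, and with several copies one can reach every basis vector by differences of $(\pm 2)$-classes). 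I would then invoke Wall-type stable realization results: after connect-summing with finitely many copies of $\s^2 \times \s^2$, a prescribed isometry of the intersection form of a simply connected $4$-manifold is induced by a diffeomorphism, and a class of square $\pm 2$ is represented by an embedded sphere. Choosing $k$ to absorb all of these stabilizations yields embedded $(\pm 2)$-spheres whose classes form a basis; declaring them the vanishing cycles produces a Lefschetz fibration with contractible total space and $\partial E \cong \s^5$.

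The main obstacle is exactly this last step: controlling the number $k$ of $\s^2 \times \s^2$ summands so that a \emph{full} integral basis of the even, possibly degenerate, intersection form is simultaneously represented by embedded $(\pm 2)$-spheres realizable as vanishing cycles, and checking that the associated $3$-handle attachments kill $\pi_2$ without creating $\pi_3$, so that $E$ is genuinely contractible rather than merely a homology ball. The even-form (spin) hypothesis is essential here, since an odd class of square $\pm 1$ cannot serve as a Lefschetz vanishing cycle, and it is precisely the spin condition together with the freedom to stabilize that guarantees a basis of admissible $(\pm 2)$-classes.
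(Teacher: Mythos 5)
First, a point of reference: the paper itself gives no proof of this statement --- it is quoted verbatim from Saeki [Sk2, Theorem 4.1]. Saeki's actual argument runs through Seifert forms rather than Lefschetz fibrations: one stabilizes the even intersection form $Q_V$ until it admits a unimodular integral matrix $A$ with $A + A^{T} = Q_V \oplus kH$, realizes $A$ geometrically as the Seifert form of a simple open book (using Wall-type stable realization of isometries by diffeomorphisms), and concludes via the criterion the present paper quotes as Lemma \ref{kauf} (the open book is a homotopy sphere iff the variation map is unimodular) together with the triviality of $\Theta_5$. Your plan --- an achiral Lefschetz fibration over $\D^2$ with contractible total space, whose vanishing cycles form a basis of $H_2$ by embedded $(\pm 2)$-spheres --- is a genuinely different and in-principle workable route, and the outer shell of it is sound. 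Indeed your worry about $\pi_3$ is unnecessary: $E$ is simply connected, so once $\partial_3$ is an isomorphism, acyclicity plus Hurewicz--Whitehead gives contractibility outright. (Also, $F\simeq \bigvee \s^2$ is true, but not because of a handle decomposition without $1$- and $3$-handles, which is not known in general; it follows from $H_3(V)=0$ and freeness of $H_2(V)$ via Lefschetz duality.)

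The genuine gap is exactly the step you flag, and the one concrete mechanism you propose for it fails. A single hyperbolic summand $H=\langle e,f\rangle$ admits \emph{no} basis of square-$(\pm 2)$ vectors: if $v=ae+bf$ has $Q(v)=2ab=\pm2$ then $a,b\in\{\pm1\}$, so for any two such vectors the determinant $ad-bc$ is even; in particular $\{e+f,\,e-f\}$ spans only an index-$2$ sublattice, so the summand-by-summand ``conversion'' you sketch cannot produce a basis. The same parity computation obstructs $\Z z\oplus H$ when $Q_V$ is degenerate (which happens for bona fide spin examples such as $V=\s^2\times\D^2$), so the degenerate case needs explicit care as well. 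The repair requires mixing across many summands --- for instance $8H\cong E_8\oplus(-E_8)$, whose root bases consist of square-$(\pm2)$ vectors, or cross-summand patterns like $\{e+f,\ r+e\}$ with $r$ an already-realized square-$2$ sphere class, whose coordinate block $\begin{pmatrix} 1 & 1\\ 1 & 0\end{pmatrix}$ is unimodular and whose classes are realized by tubing disjoint embedded spheres. The latter pattern also resolves the bootstrapping problem you name but do not solve: each Norman-trick stabilization used to convert a surface representative of a basis class into an embedded sphere adds a fresh hyperbolic pair, which must itself be absorbed into the square-$(\pm2)$ basis, and $\{e+f,\ r+e\}$ does this without disturbing unimodularity (note that vanishing cycles sit in distinct fibers over $\partial\D^2$, so no disjointness of the spheres inside $F$ is needed). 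With these ingredients your construction closes, but as written the decisive realization step is asserted rather than proved, and its only proposed implementation is obstructed by the parity phenomenon above.
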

    
     \noindent Here, $V \# k(\s^2 \times \s^2)$ denotes the $k$-fold connected sum of $V$ with $\s^2 \times \s^2$. Since $\s^2 \times \s^2$ is spin, $V \# k(\s^2 \times \s^2)$ is spin too.
     
     \vspace{0.25cm}

	In the remaining of this section, we discuss the organisation of this article and the rest of our results related to extendible mapping classes and their applications. 
	
	\vspace{0.1cm}
	
	\noindent In section \ref{sec2}, we discuss some preliminary notions and results about surface embeddings, extendiblity of mapping classes, the Rokhlin quadratic form and open books. Section \ref{sec3} then discusses the proofs of Theorem \ref{thm1} and Theorem \ref{thm1.5}.
	
	\vspace{0.1cm}
	
	\noindent Continuing our study of extendible mapping classes, in section \ref{sec4}, we explore various examples and constructions of extendible and non-extendible mapping classes of embedded surfaces. In particular, we provide many such examples of properly embedded surfaces constructed from seifert surfaces of knots and links in $\s^3$. See Theorem \ref{thm2}, Theorem \ref{thm3} and Theorem \ref{thmham}.
	
	\vspace{0.1cm}
	
	\noindent In section \ref{sec5}, we discuss a relation between Theorem \ref{thm1} and sliceness (smooth) of links in homology $4$-balls with boundary $\s^3$. We show how Theorem \ref{thm1} can be used to obstruct sliceness in such cases. See Theorem \ref{thm4}. 
	
	\vspace{0.1cm}
	
	\noindent Finally, in section \ref{sec6}, we introduce a \emph{fibered Dehn twist type} diffeomorphism of $\s^4$, denoted by $\textrm{T}_{\s^3}$, using the flow of an open book decomposition on $\s^3$. We show that for the trivial embedding of a closed orintable genus $g$ surface in $\s^4$, the map $\textrm{T}_{\s^3}$ is enough (up to isotopy of $\s^4$) to induce all elements in the subgroup of extendible mapping classes. Moreover, using the description of $\textrm{T}_{\s^3}$, we give a generating set of the extendible mapping class group with $3g$ generators (see Theorem \ref{thm5}). This improves a previous result of Hirose \cite{hirose1} giving a generating set with $6g-1$ generators.

	\vspace{0.1cm}

	Unless stated otherwise, we always work in the category of smooth, connected and orientable manifolds. 
	
	\vspace{0.1cm}
		
	\subsection*{Acknowledgement} The authors thank Sukumar Das Adhikari for his support and encouragement during this work. We also thank Maggie Miller for a clarification regarding Rokhlin quadratic forms.

	\section{Preliminaries}	\label{sec2}
	
	\subsection{Open books and embeddings} An open book decomposition of a closed $(2n+1)$-manifold $M$ consists of a codimension $2$ closed submanifold $B$ and a fibration map $\pi : M \setminus B \rightarrow \s^1$, such that in a tubular neighborhood of $B \subset M$, the restriction map $\pi : B \times (\D^2 \setminus \{0\}) \rightarrow \s^1$ is given by $(b,r,\theta) \mapsto \theta$. The fibration $\pi$ determines a unique fiber manifold $N^{2n}$ whose boundary is $B$. The closure $\bar{N}$ is called the \emph{page} and $B$ is called the \emph{binding}. The monodromy of the fibration map $\pi$ determines a diffeomorphism $\phi$ of $\bar{N}$ such that $\phi$ is identity near the boundary $\partial \bar{N}$. In particular, $M = \mathcal{MT}(\bar{N}, \phi) \cup _{id, \partial} \partial \bar{N} \times \D^2$, where $\mathcal{MT}(\bar{N}, \phi)$ denotes the mapping torus of $\phi$. We denote such an open book decomposition of $M$ by $\textrm{OB}(\bar{N},\phi)$. The map $\phi$ is called the \emph{monodromy} of the open book. It is a well-known fact that every $M^{2n+1}$ admits an open book decomposition. 
	
	\begin{exmp} 
		The \emph{trivial} open book on $\s^n$ is given by $\textrm{OB}(\D^{n-1}, id)$. 
	\end{exmp} 
	
	\noindent Given an open book $\textrm{OB}(V^{2n},h)$, the \emph{variation map} of the monodromy $h$ is defined as the homomorphism $\Delta_h : H_n(V, \partial V; \mathbb{Z}) \rightarrow H_n(V; \mathbb{Z})$ induced by $(id_V)_* - h_*$.

	\begin{definition}[Simple open book]
		An open book $\textrm{OB}(V^{2n},h)$ ($n \geq 2$) is called simple if $V$ is $(n-1)$-connected and $\partial V$ is $(n-2)$-connected. 
	\end{definition}

	\begin{definition}[Open book embedding]
		
		We say, $\textrm{OB}(\Sigma_1,\phi_1)$ \emph{open book embeds} in $\textrm{OB}(\Sigma_2,\phi_2)$, if there exists a proper embedding $g : (\Sigma_1,\partial \Sigma_1) \rightarrow (\Sigma_2,\partial \Sigma_2)$ such that $g \circ \phi_1$ is isotopic to $\phi_2 \circ g$, relative to the boundary. 
		
	\end{definition}

	\subsection{Extendible mapping classes of embedded surfaces}
	
	Let $f : (\Sigma, \partial \Sigma) \rightarrow (V, \partial V)$ be a proper embedding of a compact orientable surface $\Sigma$ in a compact orientable $4$-manifold $V$. Let $\mathcal{MCG}(\Sigma, \partial \Sigma)$ denote the relative mapping class group of $\Sigma$. We denote the Dehn twist along a simple closed curve $\gamma \subset \Sigma$ by $\tau_\gamma$.
	
	\begin{definition}
   		A mapping class $\phi \in \mathcal{MCG}(\Sigma, \partial \Sigma)$ is called \emph{$f$-extendible} if there exists a diffeomorphism $H$ of $V$ (that restricts to identity near $\partial V$) such that $H \circ f = f \circ \phi$, up to isotopy.  
	\end{definition}
	
	\noindent It is not hard to see that extendibility is rather a property of a mapping class of the surface. In particular, we note the following.
	
	\begin{lemma} \label{Leic}
		Let $f$ be a proper embedding of a surface $F$ in a $4$-manifold $V$. Let $\alpha$ and $\beta$ be  two isotopic simple closed curves on $F$. If $\tau_\alpha$ is $f$-extendible, then $\tau_\beta$ is also $f$-extendible. 
	\end{lemma}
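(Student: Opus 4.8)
The plan is to reduce the statement to two facts, each essentially formal: first, that isotopic simple closed curves on $F$ represent the same element of $\mathcal{MCG}(F,\partial F)$ via their Dehn twists; and second, that $f$-extendibility depends only on the class of a diffeomorphism in $\mathcal{MCG}(F,\partial F)$, not on the chosen representative. Granting these, the conclusion is immediate: $\tau_\alpha$ and $\tau_\beta$ define the same mapping class, so one is $f$-extendible precisely when the other is. This is exactly the principle anticipated in the remark preceding the lemma, that extendibility is a property of the mapping class.

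For the first fact I would use the isotopy extension theorem. Since $\alpha$ and $\beta$ are isotopic simple closed curves lying in the interior of $F$, there is an ambient isotopy $\{G_t\}_{t\in[0,1]}$ of $F$, supported in the interior and hence fixing $\partial F$, with $G_0=\mathrm{id}_F$ and $G_1(\alpha)=\beta$. The naturality (conjugation) formula for Dehn twists then gives $\tau_\beta=\tau_{G_1(\alpha)}=G_1\circ\tau_\alpha\circ G_1^{-1}$. As $G_1$ is isotopic to the identity rel $\partial F$ through $\{G_t\}$, conjugation by $G_1$ acts trivially on $\mathcal{MCG}(F,\partial F)$, so $[\tau_\beta]=[\tau_\alpha]$; equivalently, $\tau_\alpha$ and $\tau_\beta$ are isotopic rel $\partial F$.

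For the second fact I would argue directly from the definition. Suppose $\phi$ and $\phi'$ are diffeomorphisms of $F$ isotopic rel $\partial F$, say through $\{\psi_t\}$ with $\psi_0=\phi$ and $\psi_1=\phi'$, and suppose $\phi$ is $f$-extendible, witnessed by a diffeomorphism $H$ of $V$ (identity near $\partial V$) with $H\circ f$ isotopic to $f\circ\phi$ as proper embeddings. Post-composing $f$ with the isotopy $\{\psi_t\}$ yields an isotopy from $f\circ\phi$ to $f\circ\phi'$ through proper embeddings, and concatenating this with the isotopy $H\circ f\simeq f\circ\phi$ shows $H\circ f$ is isotopic to $f\circ\phi'$. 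Hence the \emph{same} $H$ witnesses the $f$-extendibility of $\phi'$. Taking $\phi=\tau_\alpha$ and $\phi'=\tau_\beta$ finishes the proof.

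I expect no serious obstacle, since the content is entirely formal; the only points needing care are the rel-$\partial$ conditions. One must check that the ambient isotopy realizing $\alpha\simeq\beta$ can be chosen to fix $\partial F$ (which follows because the curves and their isotopy can be taken inside a compact subset of the interior), and that the isotopies of proper embeddings appearing in the definition of extendibility concatenate correctly, which is routine.
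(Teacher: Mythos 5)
Your proof is correct; note that the paper actually gives no proof of this lemma at all, stating it as an immediate consequence of the preceding remark that extendibility is a property of the mapping class. Your two-step argument --- isotopic simple closed curves yield Dehn twists representing the same element of $\mathcal{MCG}(F,\partial F)$ (via isotopy extension and the conjugation formula $\tau_{G_1(\alpha)}=G_1\circ\tau_\alpha\circ G_1^{-1}$), and extendibility depends only on the mapping class because of the ``up to isotopy'' in the definition --- is precisely the formalization of that remark, so it matches the paper's intended reasoning.
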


	\noindent Note that the set of all $f$-extendible mapping classes form a subgroup of $\mathcal{MCG}(\Sigma,\partial \Sigma)$. Moreover, extendibility is also preserved under isotopy of an embedding.
	
	\begin{lemma}\label{4.4}
	Let $f_0, f_1 : F \rightarrow M$ be proper embeddings such that  $f_0$ is isotopic to $f_1$ (relative to boundary), and let $\gamma$ be a simple closed curve on $F$. If $\tau_\gamma$ is $f_1$-extendible then $\tau_\gamma$ is $f_0$-extendible. 
	\end{lemma}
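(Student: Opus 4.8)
The plan is to transport the diffeomorphism witnessing $f_1$-extendibility of $\tau_\gamma$ over to $f_0$ by conjugating it with an ambient isotopy that realizes the given isotopy between the two embeddings. The only geometric input beyond formal manipulation is the isotopy extension theorem, applied in the category of proper embeddings of manifolds with boundary.

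First I would upgrade the isotopy between $f_0$ and $f_1$ to an ambient isotopy. By hypothesis there is a smooth family of proper embeddings $f_t : (F,\partial F) \to (M,\partial M)$, $t \in [0,1]$, joining $f_0$ to $f_1$ with $f_t|_{\partial F}$ independent of $t$; after a standard collar normalization we may assume $f_t$ is constant in a collar of $\partial F$. The isotopy extension theorem then yields an ambient isotopy $\{G_t\}_{t \in [0,1]}$ of $M$ with $G_0 = \mathrm{id}_M$, with each $G_t$ equal to the identity near $\partial M$, and with $G_t \circ f_0 = f_t$ for all $t$. Writing $G := G_1$, we obtain a diffeomorphism of $M$, identity near $\partial M$, satisfying $G \circ f_0 = f_1$.

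Next I would define the candidate extension for $f_0$ by conjugation. Let $H_1$ be a diffeomorphism of $M$, identity near $\partial M$, with $H_1 \circ f_1 \simeq f_1 \circ \tau_\gamma$ relative to the boundary, as guaranteed by $f_1$-extendibility. Set $H_0 := G^{-1} \circ H_1 \circ G$. Since $G$ and $H_1$ both restrict to the identity near $\partial M$, so does $H_0$, so $H_0$ is a diffeomorphism of $M$ of the required type. It then remains to verify the extendibility relation: using $G \circ f_0 = f_1$,
\[
H_0 \circ f_0 = G^{-1} \circ H_1 \circ G \circ f_0 = G^{-1} \circ H_1 \circ f_1 .
\]
Post-composing the isotopy $H_1 \circ f_1 \simeq f_1 \circ \tau_\gamma$ with the diffeomorphism $G^{-1}$ preserves it, and keeps it fixed on $\partial F$ because $G^{-1}$ is the identity near $\partial M$; hence $G^{-1} \circ H_1 \circ f_1 \simeq G^{-1} \circ f_1 \circ \tau_\gamma = f_0 \circ \tau_\gamma$. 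Therefore $H_0 \circ f_0 \simeq f_0 \circ \tau_\gamma$ relative to the boundary, which is exactly $f_0$-extendibility of $\tau_\gamma$.

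I expect the only delicate point to be the invocation of the isotopy extension theorem in the bounded, proper setting: one must ensure the ambient isotopy can be chosen to fix a neighborhood of $\partial M$, which is where properness together with the fact that $f_t$ is constant in a collar of $\partial F$ is used. Granting this standard fact, the argument is purely formal and symmetric in $f_0$ and $f_1$, so it equally proves the converse implication. I would also note that nothing in the argument uses that the mapping class is a Dehn twist: the same conjugation shows that $f$-extendibility of an \emph{arbitrary} mapping class of $F$ is invariant under isotopy of the embedding.
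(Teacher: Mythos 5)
Your proof is correct and follows essentially the same route as the paper's: extend the isotopy of embeddings to an ambient isotopy of $M$ (the paper's $H_t$, your $G_t$), then conjugate the diffeomorphism witnessing $f_1$-extendibility by its time-one map to obtain the witness for $f_0$. The paper's argument is exactly your $H_0 = G^{-1}\circ H_1 \circ G$ computation, written as $(H_1^{-1}\circ \Psi \circ H_1)\circ f_0 = f_0\circ\tau_\gamma$; your extra care with collars and boundary behavior, and your remark that the argument applies to arbitrary mapping classes, are refinements of details the paper leaves implicit.
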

	
	\begin{proof}
	Given that the embeddings $f_0, f_1 : F \rightarrow M$ are isotopic, there exists an isotopy $h_t: F \rightarrow M$ ($t\in [0,1]$) such that $h_0=f_0$ and $h_1=f_1.$  We extend this to an isotopy $H_t : M \rightarrow M$ ($t \in [0,1]$) such that $H_0 = id_M$ and $H_1 \circ f_0 = f_1.$ Since $\tau_\gamma$ is $f_1$-extendible, there exists a diffeomorphism  $\Psi : M \rightarrow M$ such that $\Psi \circ f_1 = f_1 \circ \tau_\gamma.$ Therefore, $({H_1 }^{-1}\circ\Psi \circ H_1 )\circ f_0 =  f_0 \circ \tau_\gamma$ (up to isotopy). Hence, $\tau_\gamma$ is $f_0$-extendible. 
	\end{proof}
	
	\begin{definition}[Flexible embedding]
		A proper embedding $f$ of a surface $\Sigma$ in a $4$-manifold $V$ is called flexible if every element of $\mathcal{MCG}(\Sigma,\partial \Sigma)$ is $f$-extendible.
	\end{definition}
		
	\subsection{Characteristic embedding and Rokhlin quadratic form}
	
	Let $\Sigma$ be a compact orientable surface and let $V$ be a compact orientable $4$-manifold such that $H_1 (V; \Z_2) = 0$. A proper embedding $f : (\Sigma, \partial \Sigma) \rightarrow (V, \partial V)$ is called \emph{chraracteristic} if $[f(\Sigma)] \cdot X \equiv X\cdot X \pmod 2$  for all $X \in H_2(M^4, \mathbb{Z}_2)$. Equivalently, $[f(\Sigma)] \in H_2(V, \partial V; \mathbb{Z}_2)$ is the Poincare dual of the second Stiefel–Whitney class of $V$. Given such a characteristic embedding $f$, one can define a quadratic form $q_f:
	H_1 (F,\mathbb{Z}_2) \rightarrow \Z_2$ in the following way.  
	
	\noindent For $x \in H_1(\Sigma, \mathbb{Z}_2)$ we choose a simple closed curve $C$ on $\Sigma$ representing $x$. As $H_1(V; \mathbb{Z}_2)=0$, $f(C)$ bounds a connected, orientable surface $D$ embedded in $V^4$. We may assume that $D$ meets $\Sigma$ transversely. Since $D$ is homotopy equivalent to an wedge of circles, the normal bundle ${\nu}_D$ of $D$ is trivial. A trivialization on $D$ induces a unique trivialization on the restriction of $\nu_D$ over $\partial D = f(C)$ (see section $2$ in \cite{kl}). The normal bundle of $f(C)$ in $f(\Sigma)$ then determines a $1$-dimensional subbundle of this trivialized $2$-disk bundle over $C$. Let $\mathcal{O}(D)$ be the mod $2$ number of full twists made by this $1$-dimensional subbundle as we go around $C$. Now define, $$ q_f(x)= D \cdot f(\Sigma) + \mathcal{O}(D)\mathrm{mod~2},$$ where $D \cdot f(\Sigma)$ is the number of intersection points between the interior of $(D)$ and $f(\Sigma)$. The quadratic form $q_f$ is called the \emph{Rokhlin quadratic form} of the embedding $f$. For $x, y \in H_1(\Sigma; \mathbb{Z}_2)$, $q_f$ satisfies the relation: $q_f(x + y) = q_f(x) + q_f(y) + x \cdot y$, where $x \cdot y$ is the $\pmod2$ intersection number of $x$ and $y$. 
	
    \noindent Recall that $\phi \in \mathcal{MCG}(\Sigma, \partial \Sigma)$ is $f$-extendible, if there is an orientation preserving diﬀeomorphism $\Phi$ of $V$ (up to isotopy) such that $f \circ \phi = \Phi \circ f$. If $\phi$ is $f$-extendible then, by definition of the Rokhlin quadratic form, $q_f([\phi(\alpha)]) = q_f([\alpha])$ for every simple closed curve $\alpha$ on $\Sigma$ (see remark $3.2$ in \cite{hy}).

	\subsection{Rokhlin quadratic form for some surfaces in $\mathbb{D}^4$} \label{ssec2}
	
	Let $L \subset \mathbb{S}^3 $ be a link, and let $F$ be a Seifert surface of $L$ diffeomorphic to $\Sigma_{g,b}$. Consider a collar neighborhood $\s^3 \times [0,1]$ of $\partial \mathbb{D}^4 = \mathbb{S}^3 $ in $ \mathbb{D}^4$ such that $\s^3 \times \{1\} = \partial \D^4$. We construct a proper embedding $f$ of $\Sigma_{g,b}$, by isotoping the interior of $F$ into the interior of the collar (fixing the boundary link $L$ pointwise) such that $f(\Sigma_{g,b}) \cap \mathbb{S}^3 \times \{0\}$ is isotopic to $F$ in $\s^3 \times \{0\}$, and $f(\Sigma_{g,b}) \cap \s^3 \times \{t\} = L \times \{t\}$ for $t \in (0,1]$. We call $f$ is a \emph{Seifert type embedding} of $\Sigma_{g,b}$ corresponding to the Seifert surface $F$. It is easy to see that Seifert type embeddings can be defined for any $4$-manifold with boundary $\s^3$. Moreover, we can isotope the embedding (relative to boundary) such that $f(\Sigma_{g,b}) \cap \mathbb{S}^3 \times \{c\} = F$ for any $c \in [0,1)]$, $f(\Sigma_{g,b}) \cap \mathbb{S}^3 \times [0,c)] = \emptyset$, and $f(\Sigma_{g,b}) \cap \s^3 \times \{t\} = L \times \{t\}$ for $t \in (c,1].$
	
	\vspace{0.1cm}
	
	\noindent Given a knot $K \subset \s^3$ with framing $n \in \mathbb{Z}$, we get a $2$-component link $L_K$ with linking number $n$ and a ribbon surface bounding $L_K$. Let $\mathcal{R}(K,n)$ denote the corresponding Seifert type embedding of an annulus in $\D^4$. 
	
	\noindent Let $F$ be a Seifert surface of $K$ and let $\gamma \subset F$ be an unknot. Let $f$ be a Seifert type embedding corresponding $F$. Let $D_2 = \gamma \times [0,\frac{1}{2}] \cup D_1$, where $D_1$ is a $2$-disk in $\s^3 \times \{0\}$ and $\partial D_2 = \gamma \times \{0\}$. Then, $D_2$ is a $2$-disk such that $int(D_2) \cdot Im(f) = 0$. Since $\gamma$ bounds a two disk isotopic to the standard $2$-disk in $\s^3 \times \{\frac{1}{2}\}$, the trivialization of the normal bundle $\nu_{D_2}$ over $\gamma$ is the zero trivialization. Therefore, by definition of the Rokhlin quadratic form, $q_f([\gamma]) = \mathcal{O}(D_2)$ is given by the mod $2$ surface framing of $\gamma$ on $F.$ In particular, if a tubular neighborhood of $\gamma$ on $F$ is isotopic to $\mathcal{R}(K,n)$, then $q_f([\gamma]) = n \pmod 2$.

	\begin{exmp}[Quadratic form of trivially embedded $\Sigma_{g,1}$ inside $\mathbb{D}^4$.]
		
		We embed $\Sigma_{g,1}$ in $\mathbb{S}^3\times \{\frac{1}{2}\} \subset \s^3 \times [0,1]$ by removing a $2$-disk from the boundary of a standard genus $g$ handlebody embedded in $\mathbb{S}^3\times \{\frac{1}{2}\}$. Let $C$ denote the boundary of the embedded surface. We attach the cylinder $C \times [\frac{1}{2},1]$ to get a proper embedding $f$ of $\Sigma_{g,1}$, as shown in figure \ref{sig71}. Note that $f$ is a Seifert type embedding corresponding to the standard genus $g$ Seifert surface of the unknot.
		
		\begin{figure}[!htb]
			\centering
			\def\svgwidth{11cm}
\begingroup%
  \makeatletter%
  \providecommand\color[2][]{%
    \errmessage{(Inkscape) Color is used for the text in Inkscape, but the package 'color.sty' is not loaded}%
    \renewcommand\color[2][]{}%
  }%
  \providecommand\transparent[1]{%
    \errmessage{(Inkscape) Transparency is used (non-zero) for the text in Inkscape, but the package 'transparent.sty' is not loaded}%
    \renewcommand\transparent[1]{}%
  }%
  \providecommand\rotatebox[2]{#2}%
  \newcommand*\fsize{\dimexpr\f@size pt\relax}%
  \newcommand*\lineheight[1]{\fontsize{\fsize}{#1\fsize}\selectfont}%
  \ifx\svgwidth\undefined%
    \setlength{\unitlength}{483.3456174bp}%
    \ifx\svgscale\undefined%
      \relax%
    \else%
      \setlength{\unitlength}{\unitlength * \real{\svgscale}}%
    \fi%
  \else%
    \setlength{\unitlength}{\svgwidth}%
  \fi%
  \global\let\svgwidth\undefined%
  \global\let\svgscale\undefined%
  \makeatother%
  \begin{picture}(1,0.6365465)%
    \lineheight{1}%
    \setlength\tabcolsep{0pt}%
    \put(0,0){\includegraphics[width=\unitlength,page=1]{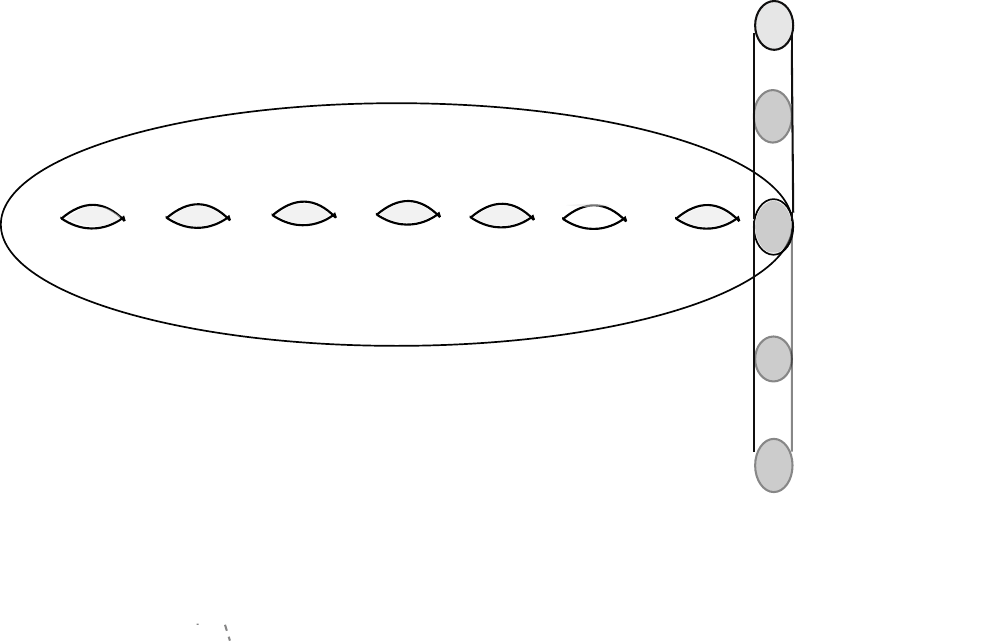}}%
    \put(0.81653412,0.61496285){\color[rgb]{0,0,0}\makebox(0,0)[lt]{\lineheight{1.25}\smash{\begin{tabular}[t]{l}$\mathbb{S}^3\times 1$\end{tabular}}}}%
    \put(0.81748659,0.52733445){\color[rgb]{0,0,0}\makebox(0,0)[lt]{\lineheight{1.25}\smash{\begin{tabular}[t]{l}$\mathbb{S}^3\times 3/4$\end{tabular}}}}%
    \put(0.81367671,0.42160893){\color[rgb]{0,0,0}\makebox(0,0)[lt]{\lineheight{1.25}\smash{\begin{tabular}[t]{l}$\mathbb{S}^3\times 1/2$\end{tabular}}}}%
    \put(0.81081929,0.28254653){\color[rgb]{0,0,0}\makebox(0,0)[lt]{\lineheight{1.25}\smash{\begin{tabular}[t]{l}$\mathbb{S}^3\times 1/4$\end{tabular}}}}%
    \put(0.84034619,0.17015366){\color[rgb]{0,0,0}\makebox(0,0)[lt]{\lineheight{1.25}\smash{\begin{tabular}[t]{l}$\mathbb{S}^3\times 0$\end{tabular}}}}%
    \put(0.75652775,0.39112951){\color[rgb]{0,0,0}\makebox(0,0)[lt]{\lineheight{1.25}\smash{\begin{tabular}[t]{l}$\gamma$\end{tabular}}}}%
    \put(0,0){\includegraphics[width=\unitlength,page=2]{Proper_embedding_of_F.pdf}}%
  \end{picture}%
\endgroup%

			\vspace{-1.5cm}
			\caption{Standardly embedded $\Sigma_{7,1}$ inside $\mathbb{D}^4$ }
			\label{sig71}
		\end{figure}

		Since $H_2(\mathbb{D}^4 ; \mathbb{Z})=0$, $f$ is characteristic. Let $c \subset \Sigma_{g,1}$ be an unknot. As seen before, we can find a 2-disk $D$ in $\mathbb{D}^4$ (transverse to $f$) such that $\partial D = c$, $int(D) \cdot Im(f) = 0$, and $D$ is isotopic to the standard $2$-disk in $\s^3$. Therefore, $q_f([c])$ is given by the surface framing of $f(c)$ on $f(\Sigma_{g,1})$. Since $\Sigma_{g,1}$ is trivially embedded in $\D^4$, $q_f$ of a meridinal curve $m$ and a longitudinal curve $l$ is zero. If $\gamma$ is a curve on $\Sigma_{g,1}$ such that $[\gamma] = [m] + [l]$ and $m \cdot l = 1$, then $q_f([\gamma]) = 1.$ 
	\end{exmp}

	\subsection{Generalized Dehn twists along $3$-manifolds} Let $Y$ be a closed oriented $3$-manifold, and $\rho : \s^1 \rightarrow Diff^+(Y )$ be a loop based at the identity map. Given any $\s^1$-action on $Y$ one can find such a loop. We can define a diffeomorphism $ \textrm{T}_Y : Y \times [0, 1] \rightarrow Y \times [0, 1]$ by $(y, t) \mapsto (\rho(t)(y), t)$ for $(y, t) \in Y \times [0, 1]$. Here, we think of $\s^1$ as the quoitent $[0,1]/\{0\sim1\}$. 
	
	\noindent Whenever there is an embedding of $Y$ into a $4$-manifold $V^4$, we can extend $\textrm{T}_Y$ to a diffeomorphism of $V^4$ supported in a neighborhood of the embedded $Y$. We call $\textrm{T}_Y$ a  \emph{fibered Dehn twist} on $V^4$ along $Y$. Although $\textrm{T}_Y$ depends on $\rho$, whenever $\rho$ is clear from the context, we shall use $\textrm{T}_Y$ to denote this Dehn twist.

	\section{Existence problem of flexible codimension 2 embedding of surface}\label{sec3}

	Let $W$ be a compact orientable $4$-manifold with $H_1(W; \mathbb{Z}) = 0$. Our main ingredient to prove Theorem \ref{thm1} and Theorem \ref{thm1.5} is the following lemma. As before, $\Sigma_{g,b}$ will denote an orientable surface of genus $g$ with $b$ boundary components.
	
	\begin{lemma} \label{mainlemma}

    Let $f : (\Sigma_{g,b}, \partial \Sigma_{g,b}) \rightarrow (W^4, \partial W^4)$ be a characteristic proper embedding.
    
    \begin{enumerate}
    	
    	\item[(a)] Let $\gamma_1$ be an essential simple closed curve in the interior of $\Sigma_{g,b}$. If $\tau_{\gamma_1}$ is $f$-extendible then $q_f([\gamma]) = 1$.
    	
    	\item[(b)] Assume that $\partial W$ is an integral homology sphere. Let $\gamma_2$ be a non-nullhomologous simple closed curve in the interior of $\Sigma_{g,b}$ which is parallel to a boundary component of $\Sigma_{g,b}$. If $\tau_{\gamma}$ is $f$-extendible, then $q_f([\gamma_2]) = 1$.  
    	
    	\item[(c)] If $(g,b) \notin \{(1,1), (0,2),(1,0)\}$, then there exists a simple closed curve $\gamma$ in the interior of $\Sigma_{g,b}$ such that $q_f(\gamma) = 0$ and $\gamma$ is either essential or parallel to a boundary component.

    \end{enumerate}

    \end{lemma}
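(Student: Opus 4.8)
All homology and intersection numbers below are mod $2$; write $H=H_1(\Sigma_{g,b};\Z_2)$ and let $R\subset H$ be the radical of the intersection pairing, spanned by the boundary classes $\langle[\partial_1],\dots,[\partial_b]\rangle$ (so $\dim R=\max\{b-1,0\}$). Two facts established above drive everything: the quadratic relation $q_f(x+y)=q_f(x)+q_f(y)+x\cdot y$, and the invariance $q_f([\phi(\alpha)])=q_f([\alpha])$ for every simple closed curve $\alpha$ whenever $\phi$ is $f$-extendible. For \emph{Part (a)} the operative content is that $\gamma_1$ is non-separating, so $[\gamma_1]\notin R$ and there is a simple closed curve $\alpha$ with $\alpha\cdot[\gamma_1]=1$. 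The Picard--Lefschetz formula gives $(\tau_{\gamma_1})_*[\alpha]=[\alpha]+(\alpha\cdot[\gamma_1])[\gamma_1]=[\alpha]+[\gamma_1]$, so applying the invariance to $\phi=\tau_{\gamma_1}$ and then the quadratic relation yields $q_f([\alpha])=q_f([\alpha]+[\gamma_1])=q_f([\alpha])+q_f([\gamma_1])+\alpha\cdot[\gamma_1]$, whence $q_f([\gamma_1])=\alpha\cdot[\gamma_1]=1$. This is the routine case.

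\emph{Part (b)} is the step I expect to be the main obstacle, and it is exactly where the hypothesis that $\partial W$ is an integral homology sphere must be used. The difficulty is that now $[\gamma_2]=[\partial_i]\in R$, so $\alpha\cdot[\gamma_2]=0$ for every closed curve $\alpha$, and in fact $\tau_{\gamma_2}$ fixes every interior simple closed curve up to isotopy; thus the invariance of $q_f$ is vacuous and the argument of (a) collapses. The plan is to replace the missing homological dual by a \emph{relative} one. Choose a properly embedded arc $\delta$ in $\Sigma_{g,b}$ running between boundary components and meeting $\gamma_2$ transversely once, so that $\tau_{\gamma_2}(\delta)=\delta\cup\gamma_2$ rel $\partial$. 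Because $\partial W$ is a homology sphere, each boundary knot $f(\partial_j)$ is nullhomologous and bounds a Seifert surface there, which lets one join the endpoints of $f(\delta)$ inside $\partial W$ to close it into a loop $\widehat{\delta}$; any $f$-extension $H$ of $\tau_{\gamma_2}$ is the identity near $\partial W$, hence carries $\widehat{\delta}$ to a loop homologous to $\widehat{\delta}+f(\gamma_2)$. Feeding this into $q_f([\gamma_2])$, computed by bounding $f(\gamma_2)$ with the annulus between $\gamma_2$ and $\partial_i$ capped by the Seifert surface of $f(\partial_i)$ — so that the surface framing of $\gamma_2$ becomes visible relative to the Seifert framing — should force the mod $2$ framing, and hence $q_f([\gamma_2])$, to equal the intersection $\widehat{\delta}\cdot\gamma_2=1$. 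The delicate points are the well-definedness of this closing-up and the bookkeeping of framings, both of which rely essentially on $\partial W$ being a homology sphere.

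\emph{Part (c)} is a counting statement about the $\Z_2$-quadratic form $q_f$, which I would prove by cases on $(g,b)$, using that $q_f|_R$ is \emph{linear} (since $x\cdot y=0$ on $R$) and that a nonzero class of $H$ is carried by a non-separating simple closed curve exactly when it lies outside $R$. If $g\ge 2$, restrict $q_f$ to a symplectic summand $P$ of dimension $2g\ge 4$: a nondegenerate $\Z_2$-quadratic form on $P$ has at least $2^{2g-1}-2^{g-1}-1\ge 5$ nonzero isotropic vectors, each represented by an essential (non-separating) curve with $q_f=0$. If $g=1$ and $b\ge 2$, then either $\operatorname{Arf}(q_f|_P)=0$, giving an essential isotropic curve, or $\operatorname{Arf}=1$ and we pass to boundary classes: if some $q_f([\partial_i])=0$ take that boundary-parallel curve, and otherwise all $q_f([\partial_i])=1$, so for a symplectic generator $a$ we get $q_f([a]+[\partial_i])=1+1+0=0$ with $[a]+[\partial_i]\notin R$, realized by an essential curve. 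If $g=0$ and $b\ge 3$, then $H=R$ and $q_f$ is a linear functional on a space of dimension $b-1\ge 2$, so its kernel is nonzero and contains a class $\sum_{i\in S}[\partial_i]$; the curve surrounding the holes indexed by $S$ is boundary-parallel when $|S|\in\{1,b-1\}$ and essential otherwise, and in either case has $q_f=0$. Finally, the excluded triples are precisely the configurations where this search must fail: for $(1,0)$ and $(1,1)$ the only available classes lie in a $2$-dimensional symplectic space that may carry $\operatorname{Arf}=1$, leaving no nonzero isotropic vector and no usable boundary class, while for $(0,2)$ the unique nonzero class is the boundary-parallel core, whose $q_f$ may equal $1$.
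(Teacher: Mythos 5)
Your part (a) is exactly the paper's argument (dual curve meeting $\gamma_1$ once, the transvection formula, invariance of $q_f$ under extendible classes), so there is nothing to compare there.

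Part (b) is where your proposal has a genuine gap, and it is precisely the step you flag as the main obstacle. Your diagnosis of why (a) collapses is correct, but the repair you sketch is not a proof: the only invariance you are entitled to use is $q_f([\phi(\alpha)])=q_f([\alpha])$ for simple closed curves $\alpha$ \emph{on} $\Sigma_{g,b}$, whereas your loop $\widehat{\delta}$ is manufactured off the surface by closing an arc up inside $\partial W$, and no property of $q_f$ established in the paper applies to it. The decisive sentence of your plan (the framing bookkeeping ``should force'' $q_f([\gamma_2])=\widehat{\delta}\cdot\gamma_2=1$) is an expectation, not an argument; you never connect the extendibility hypothesis to the membrane computation defining $q_f([\gamma_2])$. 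The paper's proof uses a different, and much cleaner, idea that is absent from your plan: doubling. Form $DW=W\cup_{\partial W}W$; the hypothesis that $\partial W$ is an integral homology sphere gives $H_1(DW;\Z_2)=0$ and that the doubled embedding $f'$ of the doubled surface (closed, of genus $2g+b-1$) is still characteristic. The curve $\gamma_2$, being non-nullhomologous, becomes non-separating in the double (a dual curve is the double of a properly embedded arc meeting $\gamma_2$ once); an extension $H$ of $\tau_{\gamma_2}$ that is the identity near $\partial W$ glues with the identity on the mirror copy to an extension over $DW$; part (a) applied to $f'$ gives $q_{f'}([\gamma_2])=1$, and $q_f([\gamma_2])=q_{f'}([\gamma_2])$ since $f$ and $f'$ agree near $\gamma_2$. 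Without this (or some other completed mechanism) your (b) remains a plan.

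Part (c) is a genuinely different route from the paper's and, for $g\geq 1$, it is correct and arguably cleaner: the paper exhibits in each case three explicit disjoint curves whose $\Z_2$-classes satisfy a linear relation (so not all three $q_f$-values can be $1$), while you count isotropic vectors of $q_f$ on a symplectic complement for $g\geq 2$ and run an Arf case analysis for $g=1$, $b\geq 2$, using linearity of $q_f$ on the radical $R$; this is uniform and avoids drawing curves. However, your planar case has a real defect: for $g=0$ and $b\geq 4$, the curve surrounding the holes indexed by $S$ with $2\leq |S|\leq b-2$ is \emph{separating} (every simple closed curve on a planar surface is), so it is not ``essential'' in the sense your own part (a) uses --- you declared the operative content of essentiality to be non-separating, and (a)'s dual-curve argument genuinely needs that. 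Under that reading the claim is even false: for the Seifert type embedding in $\D^4$ of the boundary connected sum of three $+1$-framed annuli, all four boundary classes of $\Sigma_{0,4}$ have $q_f=1$, and a planar surface carries no non-separating curves at all. To be fair, the paper's own Case 2 stumbles on exactly the same point (its third curve is neither boundary-parallel nor non-separating once $b\geq 4$); the consistent fix, which the doubling proof of (b) delivers for free since doubling makes any curve with nonzero $\Z_2$-class non-separating, is to weaken the conclusion of (c) to ``$[\gamma]\neq 0$ in $H_1(\Sigma_{g,b};\Z_2)$'' and strengthen (b) to all such curves. So I would not weigh this against you as heavily as (b), but as written your ``essential otherwise'' is incorrect.
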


	\begin{proof}
		
			Recall that for $x,y \in H_1(\Sigma_{g,b};\mathbb{Z}_2)$, $q_f(x+y) = q_f(x) + q_f(y) + x \cdot y$.
		
		\begin{enumerate}

			\item[(a)] Since $\gamma_1$ is essential, there exists another simple closed curve $\tilde{\gamma}_1$ in the interior of $\Sigma_{g,b}$ such that $\gamma_1$ intersects $\tilde{\gamma_1}$ at a single point. Thus, $q_f([\tau_{\gamma_1}(\tilde{\gamma}_1)]) = q_f([\gamma_1] + [\tilde{\gamma}_1]) = q_f([\gamma_1]) + q_f([\tilde{\gamma}_1]) +1$.  If $\tau_{\gamma_1}$ is $f$-extendable, then $q_f([\tau_{\gamma_1}(\tilde{\gamma}_1)]) = q_f([\tilde{\gamma}_1])$. Therefore, $q_f([\gamma_1]) = 1$.

			\item[(b)] Let $DW$ denote the double of $W$, obtained by gluing two copies of $W$ along their boundaries. We can assume (by smoothing corners) that $DW$ is smooth. Since $\partial W$ is an integral homology sphere, $H_1(DW;\mathbb{Z}) = 0$. Moreover, under doubling, $f$ induces a characteristic embedding, $f'$, of the double of $\Sigma_{g,b}$ (a closed surface of genus $2g+b-1$) into $DW$. Figure \ref{fig-106} demonstrates the double for $\Sigma_{0,5}$. The curve $(\gamma_2)$ then becomes essential in the double of $\Sigma_{g,b}$. Since $\tau_{\gamma_2}$ is $f$-extendible, $\tau_{\gamma_2}$ is also $f'$-extendible. By the previous case of $(a)$, we get $q_{f'}([\gamma_2]) = 1$. Since $f$ and $f'$ coincide in a neighborhood of $\gamma_2$, $q_f([\gamma_2])=q_{f'}([\gamma_2])=1$. 
			
			\begin{figure}[!htb]  
				\centering  
				\def\svgwidth{8cm}  
\begingroup%
  \makeatletter%
  \providecommand\color[2][]{%
    \errmessage{(Inkscape) Color is used for the text in Inkscape, but the package 'color.sty' is not loaded}%
    \renewcommand\color[2][]{}%
  }%
  \providecommand\transparent[1]{%
    \errmessage{(Inkscape) Transparency is used (non-zero) for the text in Inkscape, but the package 'transparent.sty' is not loaded}%
    \renewcommand\transparent[1]{}%
  }%
  \providecommand\rotatebox[2]{#2}%
  \newcommand*\fsize{\dimexpr\f@size pt\relax}%
  \newcommand*\lineheight[1]{\fontsize{\fsize}{#1\fsize}\selectfont}%
  \ifx\svgwidth\undefined%
    \setlength{\unitlength}{637.74931394bp}%
    \ifx\svgscale\undefined%
      \relax%
    \else%
      \setlength{\unitlength}{\unitlength * \real{\svgscale}}%
    \fi%
  \else%
    \setlength{\unitlength}{\svgwidth}%
  \fi%
  \global\let\svgwidth\undefined%
  \global\let\svgscale\undefined%
  \makeatother%
  \begin{picture}(1,0.82620724)%
    \lineheight{1}%
    \setlength\tabcolsep{0pt}%
    \put(0,0){\includegraphics[width=\unitlength,page=1]{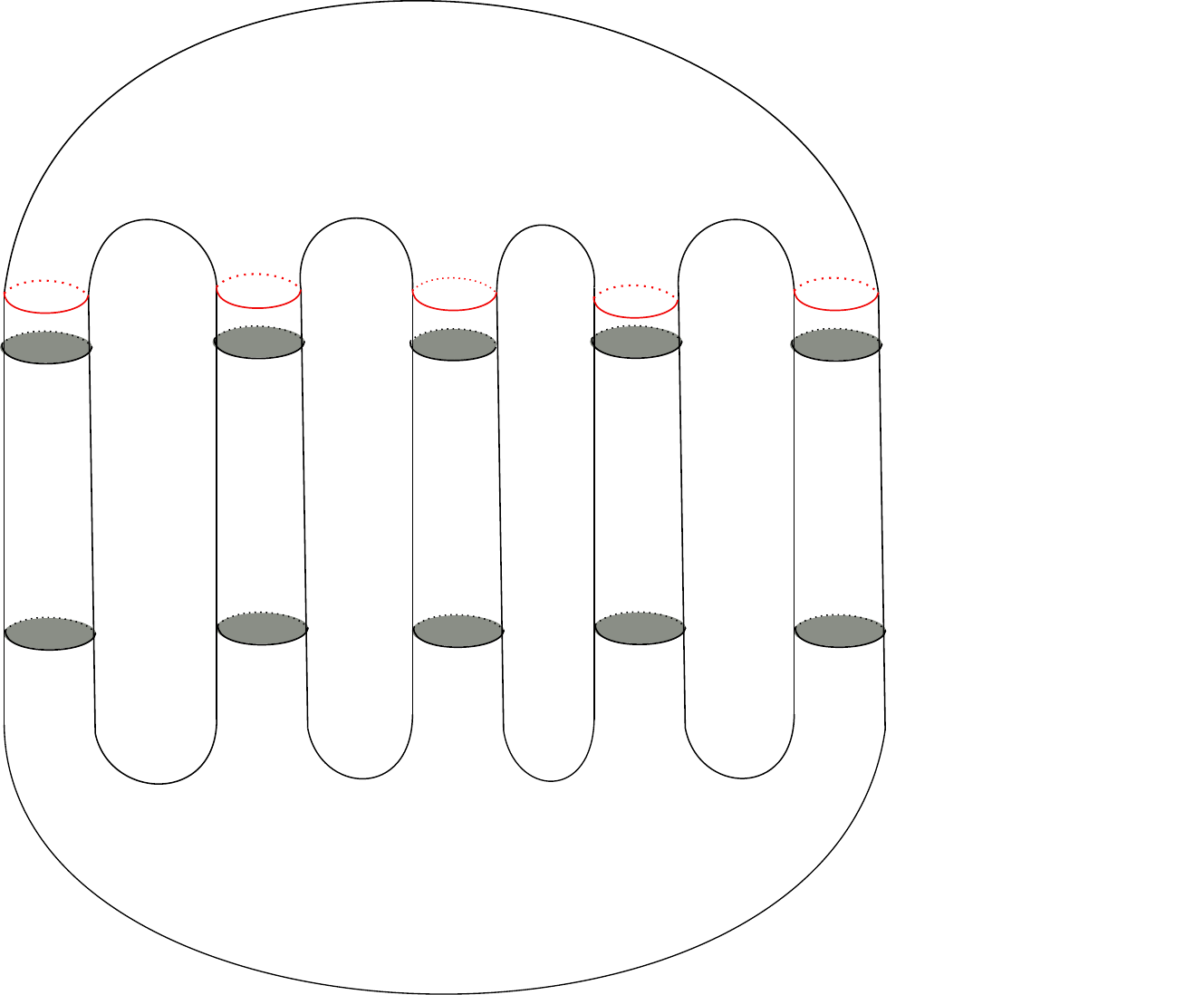}}%
    \put(0.72982074,0.58340785){\color[rgb]{0,0,0}\makebox(0,0)[lt]{\lineheight{1.25}\smash{\begin{tabular}[t]{l}$\Sigma_{0,5} \times \{0\}$\end{tabular}}}}%
    \put(0.72559436,0.24103194){\color[rgb]{0,0,0}\makebox(0,0)[lt]{\lineheight{1.25}\smash{\begin{tabular}[t]{l}$\Sigma_{0,5} \times \{1\}$\end{tabular}}}}%
  \end{picture}%
\endgroup%
  
				\caption{}  
				\label{fig-106}  
			\end{figure}
			
			\item[(c)] 	There are three possible cases.
			
			\textbf{Case 1} ($g \geq 2$ and $b \geq 0$) : Let us take a set of simple closed curves $\{a_1,a_2,..., a_g, c_1, \linebreak c_2,..., c_{g-1}, b_1, b_2\}$ on $\Sigma_{g,b}$, as in figure \ref{fig-15}. Consider the disjoint essential curves $b_1, c_1, b_2$. Note that $[b_1] = [c_1] + [b_2] \in H_2(\Sigma_{g,b};\mathbb{Z}_2)$. Thus, $q_f([b_1]) = q_f([c_1]) + q_f([b_2])$ and therefore at least one of $b_1, c_1, b_2$ has $q_f = 0$.

	    	\begin{figure}[!htb]
				\centering
				\def\svgwidth{12cm}
				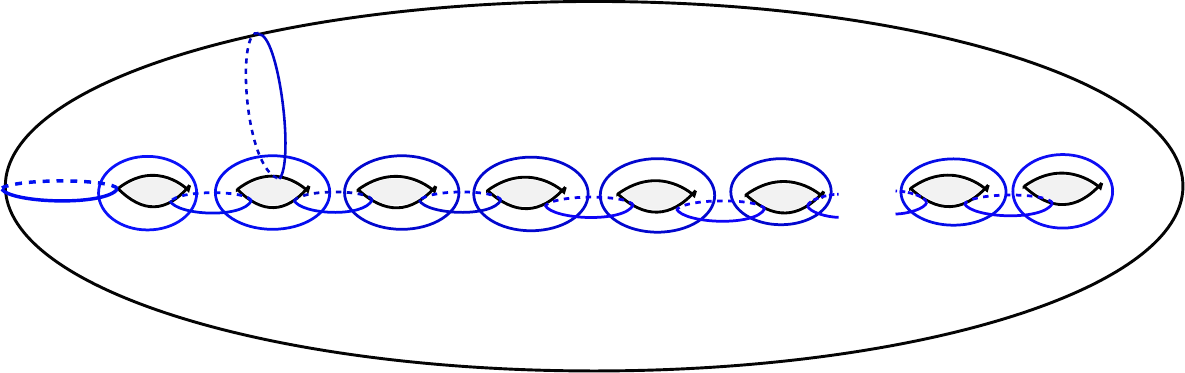
				\caption{}
				\label{fig-15}
			\end{figure}
			
			\textbf{Case 2} ($g = 0$ and $b \geq 3$) : Consider the disjoint simple closed curves $\gamma_1$, $\gamma_2$, and $\gamma$ on $\Sigma_{0,b}$, as in figure \ref{fig-13}. Since $[\gamma]=[\gamma_1] + \gamma_2] \in H_2(\Sigma_{0,b}; \mathbb{Z}_2)$, we have $q_f([\gamma]) = q_f([\gamma_1]) + q_f([\gamma_2])$. Therefore, $q_f([\alpha]) = 0$ for some $\alpha \in \{\gamma, \gamma_1, \gamma_2\}$.  
			
			\begin{figure}[!htb]  
				\centering  
				\def\svgwidth{8cm}  
\begingroup%
  \makeatletter%
  \providecommand\color[2][]{%
    \errmessage{(Inkscape) Color is used for the text in Inkscape, but the package 'color.sty' is not loaded}%
    \renewcommand\color[2][]{}%
  }%
  \providecommand\transparent[1]{%
    \errmessage{(Inkscape) Transparency is used (non-zero) for the text in Inkscape, but the package 'transparent.sty' is not loaded}%
    \renewcommand\transparent[1]{}%
  }%
  \providecommand\rotatebox[2]{#2}%
  \newcommand*\fsize{\dimexpr\f@size pt\relax}%
  \newcommand*\lineheight[1]{\fontsize{\fsize}{#1\fsize}\selectfont}%
  \ifx\svgwidth\undefined%
    \setlength{\unitlength}{368.00002461bp}%
    \ifx\svgscale\undefined%
      \relax%
    \else%
      \setlength{\unitlength}{\unitlength * \real{\svgscale}}%
    \fi%
  \else%
    \setlength{\unitlength}{\svgwidth}%
  \fi%
  \global\let\svgwidth\undefined%
  \global\let\svgscale\undefined%
  \makeatother%
  \begin{picture}(1,0.43815899)%
    \lineheight{1}%
    \setlength\tabcolsep{0pt}%
    \put(0,0){\includegraphics[width=\unitlength,page=1]{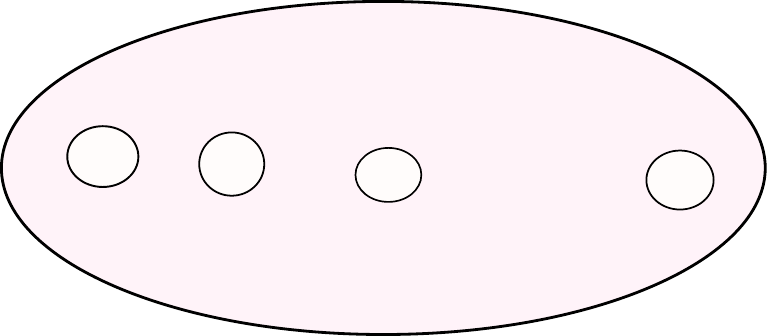}}%
    \put(0.62310429,0.18969373){\color[rgb]{0,0,0}\makebox(0,0)[lt]{\lineheight{1.25}\smash{\begin{tabular}[t]{l}$\cdots$\end{tabular}}}}%
    \put(0.07222315,0.1575609){\color[rgb]{0,0,0}\makebox(0,0)[lt]{\lineheight{1.25}\smash{\begin{tabular}[t]{l}$\gamma_1$\end{tabular}}}}%
    \put(0.24785021,0.13610398){\color[rgb]{0,0,0}\makebox(0,0)[lt]{\lineheight{1.25}\smash{\begin{tabular}[t]{l}$\gamma_2$\end{tabular}}}}%
    \put(0.45389761,0.12138493){\color[rgb]{0,0,0}\makebox(0,0)[lt]{\lineheight{1.25}\smash{\begin{tabular}[t]{l}$\gamma_3$\end{tabular}}}}%
    \put(0.82969312,0.12317247){\color[rgb]{0,0,0}\makebox(0,0)[lt]{\lineheight{1.25}\smash{\begin{tabular}[t]{l}$\gamma_b$\end{tabular}}}}%
    \put(0,0){\includegraphics[width=\unitlength,page=2]{planar.pdf}}%
    \put(0.11773456,0.3313693){\color[rgb]{0,0,0}\makebox(0,0)[lt]{\lineheight{1.25}\smash{\begin{tabular}[t]{l}$\gamma$\end{tabular}}}}%
    \put(0,0){\includegraphics[width=\unitlength,page=3]{planar.pdf}}%
  \end{picture}%
\endgroup%
  
				\caption{}  
				\label{fig-13}  
			\end{figure}  
			
			\textbf{Case 3} ($g=1$ and $b \geq 2$) : Consider the disjoint simple closed curves $a, b$ and $c$ on $\Sigma_{1,g}$, as shown in figure \ref{fig-16}. It is then enough to note that $[a] = [b] + [c] \in H_2(\Sigma_{0,b};\mathbb{Z}_2)$. 
			
				\begin{figure}[!htb]
				\centering
				\def\svgwidth{7cm}
\begingroup%
  \makeatletter%
  \providecommand\color[2][]{%
    \errmessage{(Inkscape) Color is used for the text in Inkscape, but the package 'color.sty' is not loaded}%
    \renewcommand\color[2][]{}%
  }%
  \providecommand\transparent[1]{%
    \errmessage{(Inkscape) Transparency is used (non-zero) for the text in Inkscape, but the package 'transparent.sty' is not loaded}%
    \renewcommand\transparent[1]{}%
  }%
  \providecommand\rotatebox[2]{#2}%
  \newcommand*\fsize{\dimexpr\f@size pt\relax}%
  \newcommand*\lineheight[1]{\fontsize{\fsize}{#1\fsize}\selectfont}%
  \ifx\svgwidth\undefined%
    \setlength{\unitlength}{441.94555793bp}%
    \ifx\svgscale\undefined%
      \relax%
    \else%
      \setlength{\unitlength}{\unitlength * \real{\svgscale}}%
    \fi%
  \else%
    \setlength{\unitlength}{\svgwidth}%
  \fi%
  \global\let\svgwidth\undefined%
  \global\let\svgscale\undefined%
  \makeatother%
  \begin{picture}(1,0.38448898)%
    \lineheight{1}%
    \setlength\tabcolsep{0pt}%
    \put(0,0){\includegraphics[width=\unitlength,page=1]{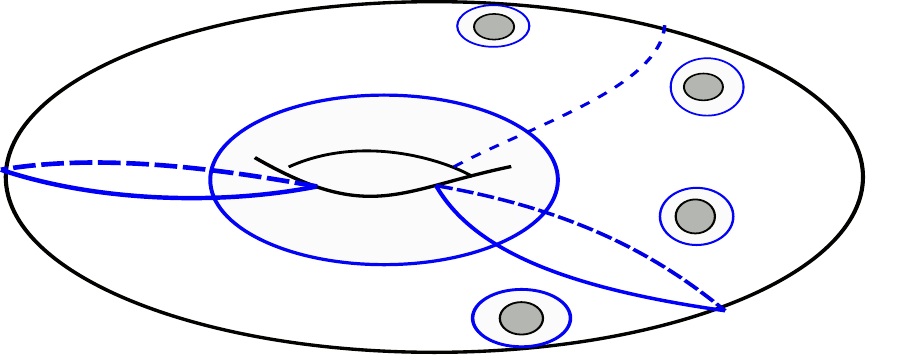}}%
    \put(0.75185722,0.19462249){\color[rgb]{0,0,0}\makebox(0,0)[lt]{\lineheight{1.25}\smash{\begin{tabular}[t]{l}$\vdots$\end{tabular}}}}%
    \put(0.05653523,0.13715881){\color[rgb]{0,0,0}\makebox(0,0)[lt]{\lineheight{1.25}\smash{\begin{tabular}[t]{l}$b$\end{tabular}}}}%
    \put(0.45855961,0.32741298){\color[rgb]{0,0,0}\makebox(0,0)[lt]{\lineheight{1.25}\smash{\begin{tabular}[t]{l}$a$\end{tabular}}}}%
    \put(0.56866373,0.30448538){\color[rgb]{0,0,0}\makebox(0,0)[lt]{\lineheight{1.25}\smash{\begin{tabular}[t]{l}$c$\end{tabular}}}}%
    \put(0,0){\includegraphics[width=\unitlength,page=2]{drawing.pdf}}%
  \end{picture}%
\endgroup%

				\caption{}
				\label{fig-16}
			\end{figure}
		
		\end{enumerate}

	\end{proof}

	\begin{proof}[Proof of Theorem \ref{thm1}] Let us assume that $f : (\Sigma_{g,b}, \partial \Sigma_{g,b}) \rightarrow (W, \partial W)$ is a proper embedding.
		
		\begin{enumerate}
			\item[(1)] We first consider the case with nonempty boundary.  Since $W^4$ is an integral homology $4$-ball with boundary that is an integral homology $3$-sphere, $H_2(W,\partial W; \Z) = 0$. Thus, $f$ is characteristic.  By statement $(c)$ of Lemma \ref{mainlemma}, for $g \geq 1$ and $b \geq2$, there exists an essential simple closed curve $\alpha_1$ in the interior of $W$ such that $q_f([\alpha_1]) = 0$. Therefore, $\tau_{\alpha_1}$ is not $f$-extendible.
			
			\noindent 
			
			The only remaining case is when $g=1$ and $b=0$. Since $W$ is closed and an integral homology $4$-sphere, again $f$ is characteristic. By Rokhlin's theorem, $Arf([\Sigma_{1,0}]) = \frac{\sigma(W) - [\Sigma_{1,0}] \cdot [\Sigma_{1,0}]}{8} = 0 \pmod 2$. If $l$ and $m$ denote the longitudinal and meridinal curves on $\Sigma_{1,0}$ respectively, then $q_f([m])q_f([l]) = Arf([\Sigma_{1,0}]) = 0$. Thus, $q_f$ vanishes for at least one of $l$ and $m$. Say, $q_f([m]) = 0$. Therefore, $q_f([\tau_m(l)]) = q_f([m] + [l]) = q_f([l]) + 1$. Hence, $\tau_m$ is not $f$-extendible.
			
			\vspace{0.1cm}
			
			\item[(2)] For $b \geq 3$, by statement $(c)$ of Lemma \ref{mainlemma}, we can again find a simple closed curve $\beta$ that is parallel to a boundary component of $\Sigma_{g,b}$ and such that $q_f([\beta]) = 0$. By statement $(b)$ of Lemma \ref{mainlemma}, $\tau_\beta$ is not $f$-extendible.
			
		\end{enumerate}
		
	\end{proof}

	\subsection{Proof of Theorem \ref{thm1.5}}
	
	\begin{proof}[Proof of theorem \ref{thm1.5}]
		
		Let $\textrm{OB}(V^4,\phi)$ be a simple open book for $\s^5$ such that $V^4$ is spin. Let $ \Sigma$ be a connected orientable surface of genus $g$ with connected boundary, and let $f : (\Sigma, \partial \Sigma) \rightarrow (V,\partial V)$ be a proper embedding. Without loss of generality, we assume that $(g,b) \neq (1,1)$. We consider two cases.
		
		\begin{enumerate}

			\item[\textbf{Case 1}]  Let $[\Sigma,\partial \Sigma]$ belong to the kernel of the induced homomorphism $f_* : H_2(\Sigma, \partial \Sigma ; \mathbb{Z}) \rightarrow H_2(V, \partial V; \mathbb{Z})$. Since $V$ is spin, $f$ is characteristic. By Lemma \ref{mainlemma}, there exists a simple closed curve $\gamma$ in the interior of $\Sigma$ such that $\tau_\gamma$ is not $f$-extendible. Thus, there is no open book embedding of $\textrm{OB}(\Sigma, \tau_\gamma)$ in $\textrm{OB}(V,\phi)$.

			\item[\textbf{Case 2}] Say $[\Sigma, \partial \Sigma]$ does not lie in the kernel of $f_*$. Let $x \in H_2(V,\partial V ; \mathbb{Z})$ denote the image $f_*([\Sigma,\partial \Sigma])$. Recall that the the variation of the map $\phi$ is given by $\Delta_\phi = (id_V)_* - \phi_*$. Kauffman \cite{Kau} proved the following lemma for simple open books.
			
			\begin{lemma} \label{kauf}
				
				$\textrm{OB}(V,h)$ is a homotopy sphere if and only if $\Delta_h$ is an isomorphism.
				
			\end{lemma}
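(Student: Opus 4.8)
The plan is to reduce the statement about homotopy spheres to a purely homological criterion and then to compute the homology of $M := \textrm{OB}(V,h)$ in terms of the variation map. First I would record that $M$ is automatically simply connected: writing $M = \mathcal{MT}(V,h) \cup_{\partial V \times \s^1} (\partial V \times \D^2)$ and applying van Kampen's theorem, the simple-connectivity of the page $V$ forces the image of $\pi_1(\partial V)$ to die in $\pi_1(\mathcal{MT}(V,h))$, while the disk $\D^2$ kills the $\s^1$-factor; hence $\pi_1(M) = 1$ for every simple open book with $n \ge 2$. Consequently, by the Hurewicz and Whitehead theorems, $M$ is a homotopy $(2n+1)$-sphere if and only if it is an integral homology sphere, i.e. $\tilde{H}_*(M;\Z) = 0$ below the top dimension.

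The core step is to build the \emph{variation long exact sequence}
$$\cdots \to H_k(V,\partial V) \xrightarrow{\Delta_h} H_k(V) \to H_k(M) \to H_{k-1}(V,\partial V) \xrightarrow{\Delta_h} H_{k-1}(V) \to \cdots,$$
whose connecting-type map in the appropriate slot is exactly the variation $\Delta_h$. I would obtain this by splicing together two standard tools: the Wang sequence of the fibration $V \hookrightarrow \mathcal{MT}(V,h) \to \s^1$, which expresses $H_*(\mathcal{MT}(V,h))$ through $(id_V)_* - h_*$ on $H_*(V)$, and the long exact sequence of the pair $(M, \mathcal{MT}(V,h))$, whose relative groups reduce by excision and the Thom isomorphism for $(\D^2,\s^1)$ to $H_{*}(\partial V \times \D^2, \partial V \times \s^1) \cong H_{*-2}(\partial V)$. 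The effect of filling in the binding neighborhood $\partial V \times \D^2$ is precisely to replace the absolute map $(id_V)_* - h_*$ on $H_n(V)$ by the variation $\Delta_h$ on $H_n(V,\partial V)$, via the identity $\Delta_h \circ j = (id_V)_* - h_*$, where $j : H_n(V) \to H_n(V,\partial V)$ is the natural map. The bookkeeping that turns this comparison into the displayed exact sequence is the technical heart of the argument.

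Granting the variation sequence, the conclusion is immediate from the hypotheses of a simple open book. Since $V$ is $(n-1)$-connected of dimension $2n$, Lefschetz duality gives $H_k(V) = 0$ for $0 < k < n$ and $H_k(V,\partial V) = 0$ for $k < n$ and for $n < k < 2n$, so the sequence forces $\tilde{H}_k(M) = 0$ for all $k \ne n, n+1, 2n+1$, together with
$$H_{n+1}(M) \cong \ker \Delta_h, \qquad H_n(M) \cong \coker \Delta_h.$$
Finally, Lefschetz duality and universal coefficients identify $H_n(V,\partial V) \cong \Hom(H_n(V),\Z)$, so (after the canonical identification of the ranks of source and target) $\Delta_h$ is an endomorphism of a finitely generated free abelian group. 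For such a map, $\coker \Delta_h = 0$ is equivalent to $\Delta_h$ being an isomorphism, and this in turn forces $\ker \Delta_h = 0$. Hence $M$ is a homology sphere precisely when $\ker \Delta_h = \coker \Delta_h = 0$, i.e. precisely when $\Delta_h$ is an isomorphism; combined with the simple-connectivity observed above, this is equivalent to $M$ being a homotopy sphere.

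The main obstacle is the middle step: producing the variation exact sequence and, in particular, verifying by a diagram chase that the map appearing there is genuinely $\Delta_h$ (and not merely $(id_V)_* - h_*$), correctly accounting for the contributions of $H_*(\partial V)$ that arise from the binding filling. Everything else is formal once this sequence is in hand.
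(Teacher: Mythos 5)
You should first be aware that the paper contains no proof of this lemma to compare against: it is stated as a quotation of Kauffman's result \cite{Kau} and used as a black box. Judged on its own terms, your argument is a correct reconstruction of the standard proof, and it is essentially Kauffman's. The van Kampen step is right (even for $n=2$, where $\partial V$ is merely connected: $\pi_1(\partial V)$ is amalgamated with its image in $\pi_1(\mathcal{MT}(V,h))$, which is trivial because loops in $\partial V$ bound in the simply connected page, while the $\D^2$ kills the circle direction). The variation sequence you display is a true exact sequence; the connectivity hypotheses force $H_k(V)=0$ for $k\neq 0,n$ and $H_k(V,\partial V)=0$ for $k\neq n,2n$, whence $H_n(M)\cong \coker \Delta_h$, $H_{n+1}(M)\cong \ker \Delta_h$, and vanishing elsewhere below the top; and since $H_n(V,\partial V)\cong \Hom(H_n(V),\Z)$ is free of the same finite rank as $H_n(V)$, surjectivity of $\Delta_h$ alone forces injectivity. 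So ``homology sphere'' is equivalent to ``$\Delta_h$ is an isomorphism,'' and Hurewicz--Whitehead upgrades this to the homotopy sphere statement in both directions.

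The one place where your write-up is a plan rather than a proof is exactly the step you flag: producing the variation exact sequence. Your proposed route (Wang sequence of $\mathcal{MT}(V,h)\to \s^1$ spliced with the long exact sequence of the pair $(M,\mathcal{MT}(V,h))$, whose relative groups are $H_{*-2}(\partial V)$) can be pushed through, but the $H_*(\partial V)$ contributions make the bookkeeping genuinely unpleasant. A cleaner derivation, closer to the original: take a single closed page $V_0\subset M$ and use the long exact sequence of the pair $(M,V_0)$. Cutting $M$ along $V_0$ identifies the complement of an open neighborhood of $V_0$ with $V\times [0,1]$, so excision plus the K\"unneth isomorphism for the pair $(V,\partial V)\times([0,1],\partial[0,1])$ gives $H_k(M,V_0)\cong H_{k-1}(V,\partial V)$, the inverse isomorphism being $\xi\mapsto \xi\times[0,1]$. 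Under this identification the connecting homomorphism $H_k(M,V_0)\to H_{k-1}(V_0)$ sends $\xi$ to $\pm\bigl(\xi-h_*(\xi)\bigr)$: the two ends of the cylinder $\xi\times[0,1]$ are glued to $V_0$ by the identity and by $h$ respectively, and $h|_{\partial V}=id$ makes this difference an absolute cycle. That is, the connecting map \emph{is} $\pm\Delta_h$, with no further correction terms. This single computation is the diagram chase you postponed; with it in place, the rest of your argument stands as written.
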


			\noindent By Lemma \ref{kauf}, $\Delta_\phi = (id_V)_* - \phi_* : H_2(V,\partial V; \mathbb{Z}) \rightarrow H_2(V; \mathbb{Z})$ is an isomorphism. Thus, $\phi_*$ does not fix the homology class $x$. Hence, there can be no open book embedding of $\textrm{OB}(\Sigma,h)$ in $\textrm{OB}(V, \phi)$ for any $h \in \mathcal{MCG}(\Sigma,\partial \Sigma)$.

		\end{enumerate}

	\end{proof}
	
	\noindent

	 The proof of Theorem \ref{thm1.5} also implies the following result about the universality of a $5$-dimensional simple open book with a spin page, in terms of its variation map.

	 \begin{theorem}
	 	
	 	Let $W$ be compact spin $4$-manifold with boundary. A simple open book $\textrm{OB}(W,\psi)$ can not be universal, unless $\Delta_\psi$ has nontrivial kernel.
	 	
	 \end{theorem}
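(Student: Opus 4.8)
The plan is to observe that the proof of Theorem \ref{thm1.5} never used the full force of Lemma \ref{kauf} (that the variation map is an \emph{isomorphism}); it used only that the variation map is \emph{injective}, i.e. has trivial kernel. So I would run that argument verbatim with $\psi$ in place of $\phi$, replacing the appeal to "$\Delta_\phi$ is an isomorphism" by the standing hypothesis "$\Delta_\psi$ has trivial kernel," and then argue by contraposition. Thus, assuming $\ker\Delta_\psi=0$, I will exhibit a closed orientable $3$-manifold together with one of its open book decompositions that admits no open book embedding into $\textrm{OB}(W,\psi)$, whence $\textrm{OB}(W,\psi)$ is not universal.

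Concretely, I would fix a page $\Sigma=\Sigma_{g,1}$ with $g\ge 2$, so that $(g,1)$ avoids the exceptional pairs of Lemma \ref{mainlemma}, and suppose toward a contradiction that some open book $\textrm{OB}(\Sigma,h)$ admits an open book embedding $f:(\Sigma,\partial\Sigma)\to(W,\partial W)$, i.e. $\psi\circ f\simeq f\circ h$ rel boundary. The first step is the homological one, which is exactly Case $2$ of Theorem \ref{thm1.5}: applying the induced maps on $H_2(-,\partial;\Z)$ and using that the relative fundamental class is $h$-invariant gives $\psi_*\big(f_*[\Sigma,\partial\Sigma]\big)=f_*\big(h_*[\Sigma,\partial\Sigma]\big)=f_*[\Sigma,\partial\Sigma]$, hence $\Delta_\psi\big(f_*[\Sigma,\partial\Sigma]\big)=0$. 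Since $\Delta_\psi$ is injective, this forces $f_*[\Sigma,\partial\Sigma]=0$. Because $W$ is spin we have $w_2(W)=0$, so a homologically trivial page is automatically characteristic and $q_f$ is defined; thus every candidate page embedding is pinned to the characteristic situation of Case $1$.

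The second step is then Case $1$ verbatim: for the characteristic embedding $f$, Lemma \ref{mainlemma}(c) (in the range $g\ge 2$) produces an essential simple closed curve $\gamma$ in the interior of $\Sigma$ with $q_f([\gamma])=0$, and Lemma \ref{mainlemma}(a) shows $\tau_\gamma$ is not $f$-extendible; taking the monodromy $h=\tau_\gamma$ obstructs the open book embedding, since $\psi$ is an ambient diffeomorphism and hence preserves the Rokhlin form, $q_{\psi\circ f}=q_f$, forcing $q_f$ to be $h_*$-invariant, which for an essential curve with $q_f([\gamma])=0$ is impossible. I expect the main obstacle to be precisely the packaging of the two mechanisms so that they reinforce rather than miss each other: the variation-map condition only constrains the class $f_*[\Sigma,\partial\Sigma]$ while the Rokhlin condition only sees the quadratic form, so the argument must be arranged (as in Theorem \ref{thm1.5}) so that injectivity of $\Delta_\psi$ first rules out every nonzero page class and thereby reduces \emph{all} possible page embeddings to the null-homologous, characteristic case, after which the surface-topological input of Lemma \ref{mainlemma} applies uniformly and delivers the non-extendible Dehn twist obstructing universality. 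The one point I would double-check with care is that only injectivity of $\Delta_\psi$, and not surjectivity, is needed anywhere in the Case $2$ step, since it is exactly this weakening from "isomorphism" to "trivial kernel" that separates this statement from Theorem \ref{thm1.5}.
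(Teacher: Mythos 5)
Your proposal is correct and is essentially identical to the paper's own justification: the paper obtains this theorem by observing that its proof of Theorem \ref{thm1.5} uses only injectivity of the variation map in Case 2 (to force every page embedding into the null-homologous, hence characteristic, situation) together with the spin/characteristic argument via Lemma \ref{mainlemma} in Case 1, which is exactly the two-step argument you give. Your key point --- that trivial kernel of $\Delta_\psi$, not surjectivity, is all that is needed --- is precisely the observation that lets the paper state this result as a corollary of that proof.
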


	\section{Examples of extendible/non-extendible mapping classes}\label{sec4}
	
	\subsection{Mapping classes induced by an isotopy of $\s^3 \times [0,1]$} 
	
	A Hopf annulus is an embedding of an annulus in $\mathbb{S}^3$ with boundary a Hopf link. We call a Hopf annulus positive (or negative) if the linking number of its boundary is $1$ (or $-1$).  
	
	\begin{figure}[!htb]
		\centering
		\def\svgwidth{6cm}
\begingroup%
  \makeatletter%
  \providecommand\color[2][]{%
    \errmessage{(Inkscape) Color is used for the text in Inkscape, but the package 'color.sty' is not loaded}%
    \renewcommand\color[2][]{}%
  }%
  \providecommand\transparent[1]{%
    \errmessage{(Inkscape) Transparency is used (non-zero) for the text in Inkscape, but the package 'transparent.sty' is not loaded}%
    \renewcommand\transparent[1]{}%
  }%
  \providecommand\rotatebox[2]{#2}%
  \newcommand*\fsize{\dimexpr\f@size pt\relax}%
  \newcommand*\lineheight[1]{\fontsize{\fsize}{#1\fsize}\selectfont}%
  \ifx\svgwidth\undefined%
    \setlength{\unitlength}{335.27064575bp}%
    \ifx\svgscale\undefined%
      \relax%
    \else%
      \setlength{\unitlength}{\unitlength * \real{\svgscale}}%
    \fi%
  \else%
    \setlength{\unitlength}{\svgwidth}%
  \fi%
  \global\let\svgwidth\undefined%
  \global\let\svgscale\undefined%
  \makeatother%
  \begin{picture}(1,0.3300828)%
    \lineheight{1}%
    \setlength\tabcolsep{0pt}%
    \put(0,0){\includegraphics[width=\unitlength,page=1]{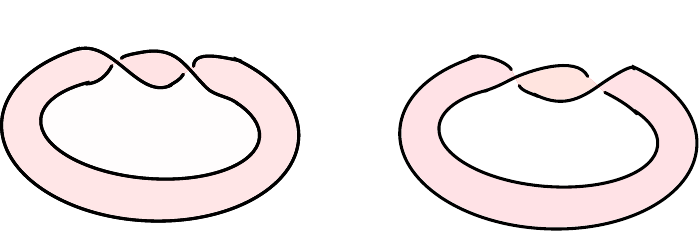}}%
    \put(0.20602709,0.28022802){\color[rgb]{0,0,0}\makebox(0,0)[lt]{\lineheight{1.25}\smash{\begin{tabular}[t]{l}$+$\end{tabular}}}}%
    \put(0.7680343,0.24995874){\color[rgb]{0,0,0}\makebox(0,0)[lt]{\lineheight{1.25}\smash{\begin{tabular}[t]{l}$-$\end{tabular}}}}%
    \put(-0.16097568,0.47801636){\color[rgb]{0,0,0}\makebox(0,0)[lt]{\begin{minipage}{1.16189242\unitlength}\raggedright \end{minipage}}}%
    \put(0.03036771,0.36044391){\color[rgb]{0,0,0}\makebox(0,0)[lt]{\begin{minipage}{0.97285442\unitlength}\raggedright \end{minipage}}}%
    \put(0,0){\includegraphics[width=\unitlength,page=2]{Hopf_band.pdf}}%
    \put(0.1517657,0.02355307){\color[rgb]{0,0,0}\makebox(0,0)[lt]{\lineheight{1.25}\smash{\begin{tabular}[t]{l}$c$\end{tabular}}}}%
    \put(0.72719494,0.01511156){\color[rgb]{0,0,0}\makebox(0,0)[lt]{\lineheight{1.25}\smash{\begin{tabular}[t]{l}$c$\end{tabular}}}}%
  \end{picture}%
\endgroup%

		\caption{Hopf annuli}
		\label{fig-01}
	\end{figure}

	\begin{proposition}[The Hopf annulus trick \cite{hy},\cite{pps}]\label{pr1}
		There exists a flexible proper embedding of an annulus $A$ in $\s^3 \times [0,1]$ such that the boundary of $A$ is embedded in $\s^3 \times \{1\}$. 
	\end{proposition}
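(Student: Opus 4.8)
The plan is to collapse the statement to extending a single Dehn twist, and then to realize that twist ambiently using the Hopf fibration. First I would record that the relative mapping class group of an annulus is infinite cyclic, $\mathcal{MCG}(A,\partial A)\cong\Z$, generated by the Dehn twist $\tau_c$ along the core curve $c$. Since the $f$-extendible classes form a subgroup of $\mathcal{MCG}(A,\partial A)$, and $\tau_c$ generates the whole group, the embedding $f$ is flexible if and only if $\tau_c$ itself is $f$-extendible. Thus the entire problem reduces to producing, for a well chosen embedding $f$, a diffeomorphism $H$ of $\s^3\times[0,1]$ that is the identity near $\s^3\times\{0,1\}$ and satisfies $H\circ f = f\circ\tau_c$ up to isotopy.

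Next I would fix the embedding. I would take $A$ to be a \emph{positive} Hopf annulus and let $f$ be the associated Seifert type embedding into the collar $\s^3\times[0,1]$, with $\partial A$ the positive Hopf link sitting in $\s^3\times\{1\}$ and the core $c$ pushed down to an interior level. Since $\s^3\times[0,1]\simeq\s^3$ we have $H_1(\s^3\times[0,1];\Z_2)=0$ and $H_2(\s^3\times[0,1],\partial;\Z_2)=0$, so $f$ is characteristic and $q_f$ is defined; the framing computation of subsection \ref{ssec2} then gives $q_f([c])=1$, because the core of a positive Hopf band has surface framing $+1$. I would emphasize that the obstruction of Lemma \ref{mainlemma} does \emph{not} apply here (the core of $\Sigma_{0,2}$ is boundary parallel, and $(0,2)$ is exactly one of the excluded cases), which is consistent with flexibility being attainable; the remaining task is the positive one of building an actual ambient extension, and the unit framing is what will make that construction close up.

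For the extension I would work in a tubular neighborhood $N\cong S^1\times D^3$ of the core $c$, and take $H$ to be the identity outside $N$. This is legitimate because $\tau_c$ is supported near $c$, so $f$ and $f\circ\tau_c$ already agree away from a neighborhood of $c$, and in particular near the boundary cylinders lying in $\s^3\times\{1\}$. Inside $N$ I would build $H$ as a generalized (fibered) Dehn twist $\textrm{T}_{\s^3}$ of the type introduced above, using the $S^1$-action coming from the Hopf fibration (equivalently, a longitudinal rotation of one Heegaard solid torus of $\s^3$), tapered to the identity near $\partial N$. The conceptual source of this positive Hopf band, together with its framing $+1$ vanishing cycle $c$, is the Milnor fibration of $z_1z_2$ on $\D^4\subset\C^2$, whose boundary open book is the Hopf open book $\textrm{OB}(A,\tau_c)=\s^3$.

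The hard part will be the framing bookkeeping in this last step: verifying that the rotation supplied by the Hopf action induces on the band \emph{exactly one} Dehn twist $\tau_c$, rather than none (which is what one gets if the twisting cancels across a symmetric height profile, e.g. from a page preserving rotation) or $\tau_c^{2}$ (which is what a framing $\pm2$ band, such as one coming from a naive $SO(4)$ rotation, would yield). This is precisely where the unit framing $q_f([c])=1$ of the positive Hopf band is essential: the $+1$ framing identifies the normal rotation of $c$ over $N$ with a single full twist of the band, so that $\textrm{T}_{\s^3}$ restricts to $\tau_c$ and not to a proper power or to the identity. Once this identification is checked, $\tau_c$ is $f$-extendible, hence every element of $\mathcal{MCG}(A,\partial A)$ is, and $f$ is flexible.
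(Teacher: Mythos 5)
Your reduction to the single Dehn twist $\tau_c$ (via $\mathcal{MCG}(A,\partial A)\cong\Z$ and the subgroup property) and your choice of embedding (the Seifert type embedding of a Hopf annulus) agree with the paper. But the construction you propose for the ambient extension cannot work, and the difficulty is not ``framing bookkeeping'': no diffeomorphism $H$ of $\s^3\times[0,1]$ supported in a tubular neighborhood $N\cong S^1\times D^3$ of the core circle $c$ can satisfy $H\circ f=f\circ\tau_c$, no matter what the framing of the band is. Indeed, for $N$ small the surface meets $N$ in a properly embedded annulus $A_0$ with $\partial A_0\subset\partial N$ whose core generates $\pi_1(N)\cong\Z$. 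Pass to the universal cover $\widetilde N\cong\R\times D^3$. Since $H$ is the identity on $\partial N$ and the preimage of $\partial N$ is connected, one can choose a lift $\widetilde H$ that is the identity on the entire preimage of $\partial N$. The preimage of $A_0$ is a single strip $\widetilde A_0\cong\R\times[-1,1]$, and $\widetilde H|_{\widetilde A_0}$ is a lift of the Dehn twist, hence of the form $(x,s)\mapsto(x+\beta(s)+m,s)$ in deck coordinates, with $\beta(-1)=0$, $\beta(1)=1$, and $m\in\Z$ fixed. Being the identity near the boundary line $s=-1$ forces $m=0$, while being the identity near $s=+1$ forces $m=-1$: a contradiction. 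So $\tau_c$ (indeed any nontrivial power of it) can only be realized by a diffeomorphism whose support is a region in which $c$ is null-homotopic; a neighborhood of the circle is exactly the wrong kind of support. Relatedly, the ingredients you name do not fit inside $N$: the fibered Dehn twist $\textrm{T}_{\s^3}$ is by definition supported in a collar of an embedded $\s^3$, and the Hopf $S^1$-action moves all of $\s^3$, not just a solid torus.

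The paper's proof takes the support to be the whole collar, which is simply connected, so the obstruction above evaporates. It observes that the Hopf annulus $\mathcal A$ is a page of an open book decomposition of $\s^3$ with monodromy $\tau_\alpha$, lets $\Psi_s$ be the monodromy flow of that open book (an isotopy of $\s^3$ fixing the binding pointwise, with $\Psi_1(\mathcal A)=\mathcal A$ and $\Psi_1|_{\mathcal A}=\tau_\alpha$), and tapers it in the collar direction, $\Gamma_1(x,t)=\Psi_{1-|t|}(x)$, so that $\Gamma_1$ is the identity at the two boundary levels and equals $\Psi_1$ at the level carrying the annulus; the vertical part of the Seifert type embedding is fixed pointwise because it consists of binding circles times an interval. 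This is also where the $\pm1$ framing genuinely enters: it is what makes the band a page of an open book of $\s^3$ whose monodromy is a single Dehn twist, rather than a localizing device. Your proposal is correct up to the point of constructing $H$, but that construction must be replaced by this (or an equivalent) global one.
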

	
	\noindent We briefly describe the embedding in proposition \ref{pr1}. The main idea is to observe that $\s^3$ admits an open book decomposition with page a Hopf annulus $\mathcal{A}$ and monodromy a Dehn twist along the core circle $\alpha$ of $\mathcal{A}$. Consider a Seifert type embedding (see subsection \ref{ssec2}) $f_{\mathcal{A}}$ for $\mathcal{A}$ in $\s^3\times [0,1]$ as follows. We smoothly push $\mathcal A$ from $\partial \D^4= \s^3\times \{1\}$ to the level $\s^3\times \{0\}$, keeping its boundary fixed, such that $\s^3\times \{t\} \cap \mathcal A$ is a Hopf link for each $t\in (0,1]$. Let $\Psi_t$ be the isotopy of $\s^3$ such that $\Psi_1$ realizes the Dehn twist monodromy on $\mathcal{A}$ via the flow of the open book $\textrm{OB}(\mathcal{A},\tau_\alpha)$. One can then define an isotopy $\Gamma_s$ ($s\in[0,1]$) of $\s^3 \times  [-1,1]$ such $\Gamma_0 = id_{\s^3 \times [-1,1]}$ and $\Gamma_1|_{\s^3 \times \{0\}} = \Psi_1.$

	$$
	\Gamma_s (x,t) 
	=
	\left\{
	\begin{array}{ll}
		\Psi_{s(1-t)}(x)  & \mbox{if } t \geq 0 \\
		\Psi_{s(t+1)}(x) & \mbox{if } t \leq 0
	\end{array}
	\right . 
	$$
	
	\vspace{0.1cm}
		
	\noindent We then extend $\Gamma_1$ to a diffeomorphism $\Gamma$ of $\s^3 \times [-1,1]$ by the 
	identity map. It is clear that $\Gamma$ is isotopic (relative to boundary) to the identity map on $\s^3 \times [-1,1]$ and $\Gamma$ makes $\tau_{\alpha}$ $f_\mathcal{A}$-extendible. Since $\mathcal{MCG}(\mathcal{A}, \partial \mathcal{A})$ is genrated by $\tau_{\alpha}$, $f_\mathcal{A}$ is also flexible. This argument also shows that the Seifert type embedding for $\mathcal{A}$ is flexible in $\D^4$.
	
	\noindent In general, consider an embedding $f$ of a surface $\Sigma$ in $V^4$ and a simple closed curve $\gamma \subset \Sigma$. Say there is a copy of $\D^2 \times \D^2$ embedded in $V^4$ such that $f(\mathcal{N}_\Sigma(\gamma)) \cap (\D^2 \times \D^2) = f(\mathcal{N}_\Sigma(\gamma)) \cap \partial(\D^2 \times \D^2) = \mathcal{A}$, where $\mathcal{N}_\Sigma(\gamma)$ is a neighborhood of $\gamma$ in $\Sigma$. Then, the Hopf annulus trick implies that $\tau_\gamma$ is $f$-extendible.
	
	\vspace{0.1cm}
	
	\begin{remark}[Convention on Dehn twist and its inverse]
		Here, by $\tau_\alpha$ ($\tau_\alpha^{-1}$) we mean the \emph{left handed} (\emph{right handed}) Dehn twist along the curve $\alpha$. In particular, if $\beta$ is another curve that intersects $\alpha$, then to obtain the curve $\tau_\alpha(\beta)$ one starts from a non-intersecting point on $\beta$ and for every intersection point one takes a left turn to go around $\alpha$ and then follow $\beta$ again. By this convention, we can observe the following change in framing under Dehn twists. Consider the ribbon surfaces $R(\beta, n_\beta)$ and $R(\alpha, n_\alpha)$ and $\alpha$ be an unknot. Then $\tau_\alpha(\beta)$ has framing $n_\beta + n_\alpha + 1$ in the plumbed surface $R(\alpha, n_\alpha) \natural R(\beta, n_\beta).$ Similarly, $\tau_\alpha^{-1}(\beta)$ has surface framing $n_\beta + n_\alpha - 1$. \end{remark}
	
	There is another isotopy of $\s^3 \times [0,1]$ known as the \emph{tube trick}. Let $\mathcal{A}_0$ denote a planar annulus in $\s^3$, i.e., the linking number of the boundary link of $\mathcal{A}_0$ is zero. Let $f_{\mathcal{A}_0}$ denote the Seifert type embedding for $\mathcal{A}_0$ and let $\alpha_0$ be the core curve of $\mathcal{A}_0$.

		\begin{exmp}[Flexible embedding of $\Sigma_{1,1}$ in $\D^4$] \label{flex1}
		The plumbing of two Hopf annuli in $\mathbb{S}^3$ gives a flexible embedding $f$ of $\Sigma_{1,1}$ inside $\mathbb{D}^4$ as shown in figure \ref{fig-1}. This is because $\mathcal{MCG}(\Sigma_{1,1}, \partial \Sigma_{1,1})$ is generated by Dehn twists along the core curves of the Hopf annuli.
		
	 \begin{figure}[!htb]
		\centering
		\def\svgwidth{8cm}
\begingroup%
  \makeatletter%
  \providecommand\color[2][]{%
    \errmessage{(Inkscape) Color is used for the text in Inkscape, but the package 'color.sty' is not loaded}%
    \renewcommand\color[2][]{}%
  }%
  \providecommand\transparent[1]{%
    \errmessage{(Inkscape) Transparency is used (non-zero) for the text in Inkscape, but the package 'transparent.sty' is not loaded}%
    \renewcommand\transparent[1]{}%
  }%
  \providecommand\rotatebox[2]{#2}%
  \newcommand*\fsize{\dimexpr\f@size pt\relax}%
  \newcommand*\lineheight[1]{\fontsize{\fsize}{#1\fsize}\selectfont}%
  \ifx\svgwidth\undefined%
    \setlength{\unitlength}{675.81414056bp}%
    \ifx\svgscale\undefined%
      \relax%
    \else%
      \setlength{\unitlength}{\unitlength * \real{\svgscale}}%
    \fi%
  \else%
    \setlength{\unitlength}{\svgwidth}%
  \fi%
  \global\let\svgwidth\undefined%
  \global\let\svgscale\undefined%
  \makeatother%
  \begin{picture}(1,0.3435983)%
    \lineheight{1}%
    \setlength\tabcolsep{0pt}%
    \put(0,0){\includegraphics[width=\unitlength,page=1]{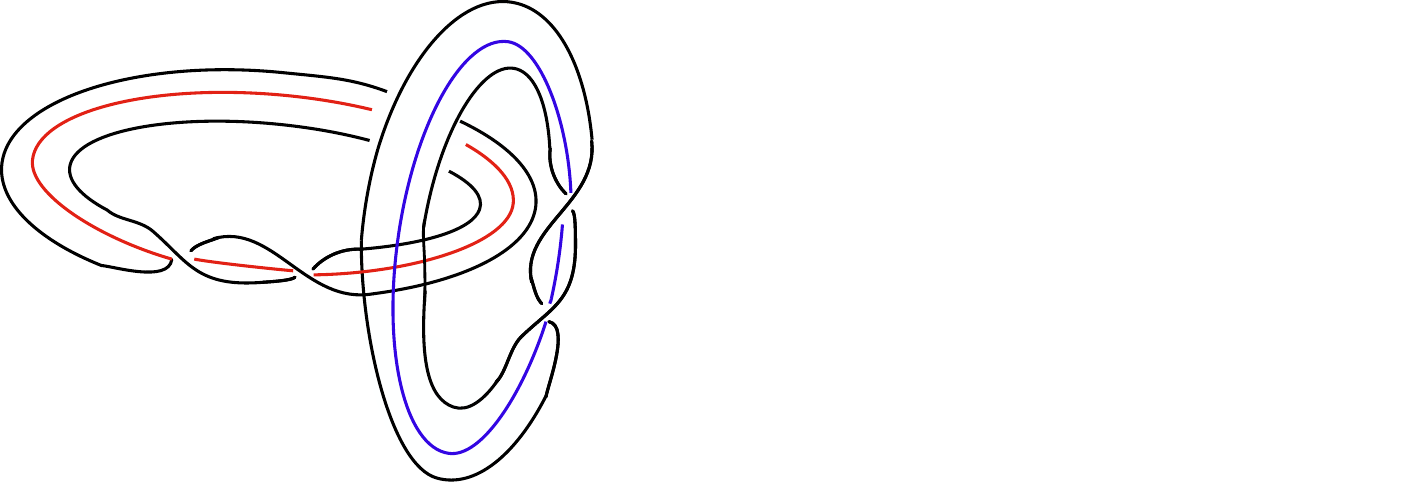}}%
    \put(0.13836673,0.09557565){\color[rgb]{0,0,0}\makebox(0,0)[lt]{\lineheight{1.25}\smash{\begin{tabular}[t]{l}$1$\end{tabular}}}}%
    \put(0.42319075,0.13570346){\color[rgb]{0,0,0}\makebox(0,0)[lt]{\lineheight{1.25}\smash{\begin{tabular}[t]{l}$1$\end{tabular}}}}%
    \put(0,0){\includegraphics[width=\unitlength,page=2]{Trefoil.pdf}}%
    \put(0.71231379,0.08750041){\color[rgb]{0,0,0}\makebox(0,0)[lt]{\lineheight{1.25}\smash{\begin{tabular}[t]{l}$1$\end{tabular}}}}%
    \put(0.96939343,0.1521085){\color[rgb]{0,0,0}\makebox(0,0)[lt]{\lineheight{1.25}\smash{\begin{tabular}[t]{l}$-1$\end{tabular}}}}%
    \put(0,0){\includegraphics[width=\unitlength,page=3]{Trefoil.pdf}}%
  \end{picture}%
\endgroup%

		\vspace{0.3cm}
		\caption{Embeddings of $\Sigma_{1,1}$  in $\mathbb{D}^4$ with boubdaries Trefoil knot and Fig-eight knot}
		\label{fig-1}
	\end{figure}
		
	\end{exmp}
	
	More generally, plumbing of two $\pm 1$-framed ribbons of slice knots in $\mathbb{S}^3$  gives a flexible embedding of $\Sigma_{1,1}$ inside $\mathbb{D}^4$. Note that by statement $(a)$ of Theorem \ref{thm1}, the boundary knots for these embeddings can not be slice in any integral homology $4$-ball with boundary $\s^3$.

	\begin{proposition}[The tube trick \cite{hirose2}] \label{tubetrick}
	
	There is a diffeomorphism $H_1$ of $\s^3 \times [0,1]$, relative to boundary, such that $H_1$ is isotopic to the identity map and $H_1\circ f_{\mathcal{A}_0} = f_{\mathcal{A}_0} \circ \tau^2_{\alpha_0}$.    
		
	\end{proposition}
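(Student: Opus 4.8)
The plan is to realize $\tau^2_{\alpha_0}$ by rotating the tubular neighborhood of an unknotted $2$-sphere built from the core $\alpha_0$, and to locate the factor of two in $\pi_1(SO(3)) = \Z/2$. First I would set up the local model. Since $\mathcal{A}_0$ is planar, its core $\alpha_0$ is an unknot in $\s^3$ with zero surface framing, so it bounds an embedded disk $\Delta$ meeting $\mathcal{A}_0$ only along $\alpha_0$. Pushing two parallel copies of $\Delta$ into the two sides of the collar $\s^3 \times [-1,1]$ and capping along $\alpha_0$ produces an unknotted $2$-sphere $S$ in the interior with trivial normal bundle, so that a neighborhood is $\nu(S) \cong S^2 \times D^2$. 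I would arrange the Seifert type embedding so that $f_{\mathcal{A}_0}(\mathcal{A}_0) \cap \nu(S) = E \times I$, where $E \subset S^2$ is the equator (identified with $\alpha_0$) and $I$ is a diameter of the $D^2$-fibre, with the support of the model Dehn twist along $\alpha_0$ lying in the interior of this piece. This $2$-sphere is the ``tube'' of the tube trick.

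Next I would write down the ambient diffeomorphism as a fibrewise rotation of $\nu(S)$ about the polar axis of $S^2$. Let $\rho_\phi \in SO(3)$ denote the rotation of $S^2$ about the axis orthogonal to $E$, and set $G(x,z) = (\rho_{A(z)}(x), z)$ for a smooth function $A : D^2 \to \R$. Restricted to $E \times I$ this is $(\psi, s) \mapsto (\psi + A(s), s)$, a power $\tau^k_{\alpha_0}$ of the Dehn twist with $k = (A(+1) - A(-1))/2\pi$. For $G$ to be supported in the interior of $\nu(S)$ I need $\rho_{A(z)} = \mathrm{id}$, i.e. $A(z) \in 2\pi\Z$, near $S^2 \times \partial D^2$. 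Extending $G$ by the identity then gives a diffeomorphism $H_1$ of $\s^3 \times [-1,1]$ relative to the boundary with $H_1 \circ f_{\mathcal{A}_0} = f_{\mathcal{A}_0}\circ \tau^k_{\alpha_0}$.

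The crux, and the reason for the square, is the existence of $A$ together with the correct value of $k$. The diameter $I$ cuts $D^2$ into two half-disks, and the map $z \mapsto \rho_{A(z)}$ sends $\partial D^2$ to the identity while sending $I$ to the loop in $SO(3)$ given by $k$ times the polar-rotation loop, i.e. to $k$ times the generator of $\pi_1(SO(3)) = \Z/2$. Extending $z \mapsto \rho_{A(z)}$ over each half-disk forces this loop to be null-homotopic, which happens precisely when $k$ is even; for $k = 2$ the map extends and yields exactly $\tau^2_{\alpha_0}$, while $k = 1$ is obstructed. This is consistent with statement $(a)$ of Lemma \ref{mainlemma}: the planar core has $q_{f}([\alpha_0]) = 0$, so $\tau_{\alpha_0}$ itself is not $f_{\mathcal{A}_0}$-extendible.

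Finally, to see that $H_1$ is isotopic to the identity relative to the boundary, I would note that $z \mapsto \rho_{A(z)}$ is a map $(D^2, \partial D^2) \to (SO(3), \mathrm{id})$, hence represents a class in $\pi_2(SO(3)) = 0$; a null-homotopy rel boundary yields an isotopy of $G$ to the identity supported in $\nu(S)$, and therefore an isotopy of $H_1$ to the identity rel $\partial(\s^3 \times [-1,1])$. The main obstacle I anticipate is the careful bookkeeping in the local model: pinning down the embedding of $\mathcal{A}_0$ into $\nu(S)$ and verifying, with orientation and framing conventions fixed by the zero framing of the planar annulus, that the induced twist number is exactly $k = 2$ rather than $\pm 2$ or $0$.
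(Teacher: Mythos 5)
The paper gives no proof of this proposition at all: it is stated as a quotation from Hirose \cite{hirose2}, so there is no internal argument to compare against. Your proof is, in substance, the standard (cited) one: realize the $0$-framed annulus locally as $E\times I$ inside the trivialized normal bundle $\s^2\times \D^2$ of a sphere obtained by doubling the disk $\Delta$ bounded by $\alpha_0$, twist the $\s^2$-fibres, extract the exponent $2$ from $\pi_1(SO(3))=\Z/2$, and get the isotopy of $H_1$ to the identity from $\pi_2(SO(3))=0$. This is the same mechanism as the Gluck twist, and your consistency check against Lemma \ref{mainlemma}(a) (the obstruction for $k=1$) is exactly right.

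One point needs repair in the write-up, though your own later argument already contains the fix. As literally written, $G(x,z)=(\rho_{A(z)}(x),z)$ for a smooth real-valued $A:\D^2\to\R$ cannot do the job: if $A$ is continuous and $A(z)\in 2\pi\Z$ near the connected circle $\partial \D^2$, then $A$ is constant near $\partial \D^2$, and since both endpoints of the diameter $I$ lie there, $k=(A(+1)-A(-1))/2\pi=0$. The same vanishing occurs if one only allows values in the polar-rotation circle $SO(2)\subset SO(3)$, because $\pi_1(SO(2))=\Z$ detects the winding. The correct datum, which your half-disk discussion implicitly uses, is a map $\Phi:\D^2\to SO(3)$ that equals the identity near $\partial \D^2$, equals the $k$-fold polar-rotation path along $I$, and is allowed to wander through all of $SO(3)$ on the interiors of the two half-disks; the boundary value on each half-disk is then $k$ times the generator of $\pi_1(SO(3))=\Z/2$, so $\Phi$ exists precisely when $k$ is even, and $k=2$ gives the proposition. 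With that restatement, plus the routine checks you flag (the zero surface framing of $\alpha_0$ agrees with the disk framing of $\Delta$, so the intersection with $\nu(S)$ really is an untwisted $E\times I$; and the two push-offs of $\Delta$ must be arranged to miss the vertical boundary cylinders of the Seifert type embedding, which is possible exactly because $\Delta$ is disjoint from $\partial\mathcal{A}_0$), the proof is complete and correct.
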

	
	In general, consider an embedding $f'$ of a surface $\Sigma$ in $V^4$ and a simple closed curve $\beta \subset \Sigma$ such that there exists  a copy of $\D^2 \times \D^2$ embedded in $V^4$ such that $f'(\mathcal{N}_\Sigma(\gamma)) \cap (\D^2 \times \D^2) = f'(\mathcal{N}_\Sigma(\beta)) \cap \partial(\D^2 \times \D^2) = \mathcal{A}_0$. Here $\mathcal{N}_\Sigma(\beta)$ is a neighborhood of $\beta$ in $\Sigma$. The tube trick then implies that $\tau^2_\beta$ is $f'$-extendible.

	\vspace{0.1cm}
	
	Recall the definition of a ribbon embedding $\mathcal{R}(K,n)$ for a knot $K$ with framing $n$ (subsection\ref{ssec2}). The following theorem gives some more examples of flexible and non-flexible embeddings. Let $int(X)$ denote the interior of $X$.
	
		\begin{theorem}\label{thm2} 
		
		Let $W^4$ be a $4$-manifold with boundary $\s^3$.
		
		\begin{enumerate}
			\item[(a)]  For a knot $K \subset \s^3$ and for $m \in \Z$, $\mathcal{R}(K,2m)$ is not flexible in $W^4$.
			\item[(b)]  If $K$ is slice in $W^4$, then $\mathcal{R}(K,\pm1)$ is flexible in $W^4$.
			\item[(c)]  There exist non-characteristic proper flexible embeddings of $\Sigma_{g,1}$ in $\CP^2 \setminus int(\D^4)$ and disk bundles over $\s^2$ with euler number $\pm3$. 
		\end{enumerate}
		
	\end{theorem}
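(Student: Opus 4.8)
The plan is to exploit the fact that the mapping class group of an annulus $A=\Sigma_{0,2}$ is infinite cyclic, generated by the Dehn twist $\tau_{\alpha_0}$ about the core curve $\alpha_0$. Hence an embedding of $A$ is flexible if and only if the single class $\tau_{\alpha_0}$ is extendible, and parts (a) and (b) both reduce to deciding extendibility of one Dehn twist. The deciding quantity will be the parity of the surface framing of $\alpha_0$, which for $\mathcal{R}(K,n)$ is exactly the framing $n$.

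For part (a), I would observe that $\alpha_0$ is, up to isotopy in $A$, a non-nullhomologous simple closed curve parallel to a boundary component, and that by the computation of subsection \ref{ssec2} its Rokhlin value is $q_f([\alpha_0]) = 2m \equiv 0 \pmod 2$. Since $\partial W = \s^3$ is an integral homology sphere, statement (b) of Lemma \ref{mainlemma} applies (its proof passes to the double $DW$, where $\alpha_0$ becomes the essential curve on the doubled torus): were $\tau_{\alpha_0}$ extendible we would be forced to have $q_f([\alpha_0]) = 1$. This contradiction shows $\tau_{\alpha_0}$ is not $f$-extendible, so $\mathcal{R}(K,2m)$ is not flexible.

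For part (b), I would convert the sliceness of $K$ into the Hopf-annulus model. Choose a properly embedded slice disk $\Delta$ with $\partial\Delta=K$ and a tubular neighborhood $\nu(\Delta)\cong \D^2\times\D^2$, whose boundary is a copy of $\s^3$. Because $\Delta$ is unknotted inside its own neighborhood, the core $\alpha_0=K$ is the unknotted core curve on $\partial\nu(\Delta)=\s^3$, and the $\pm1$ framing of $\mathcal{R}(K,\pm1)$ makes the annulus meet $\partial\nu(\Delta)$ in precisely a positive (resp.\ negative) Hopf annulus. The general form of the Hopf annulus trick following Proposition \ref{pr1} then shows $\tau_{\alpha_0}$ is $f$-extendible, whence $\mathcal{R}(K,\pm1)$ is flexible.

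For part (c), the aim is to realize a flexible $\Sigma_{g,1}$ in a manifold where the quadratic-form obstruction of Lemma \ref{mainlemma} is defeated by arranging the embedding to be non-characteristic. I would build $\Sigma_{g,1}$ as a plumbing of Hopf annuli yielding the standard genus-$g$ surface with one boundary, and embed it into the $e$-disk bundle over $\s^2$ (with $e\in\{+1,\pm3\}$, where $\CP^2\setminus\mathrm{int}(\D^4)$ is the case $e=+1$) so that every Humphries generator of $\mathcal{MCG}(\Sigma_{g,1},\partial)$ sits in one of two admissible local models: either a Hopf annulus bounding a $\D^2\times\D^2$ (its twist extending by the Hopf annulus trick) or the boundary annulus of a fiber disk, whose twist is realized by rotating the $\s^1$-action on the fibers, i.e.\ by a fibered Dehn twist $\textrm{T}_Y$ of the bundle, which exists because $e$ is odd. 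Simultaneously I would keep $[f(\Sigma_{g,1})]$ an even multiple of the zero-section $[\s^2]$; since $e$ is odd the characteristic class is an odd multiple of $[\s^2]$, so an even class makes $f$ non-characteristic, as claimed. All generators being extendible then yields flexibility.

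The hard part will be (c). Making one Dehn twist extend is routine once the correct Hopf-annulus or fiber model is in place; the difficulty is producing a single embedding for which the \emph{entire} generating set of $\mathcal{MCG}(\Sigma_{g,1},\partial)$ is simultaneously extendible while keeping the homology class non-characteristic. This requires a careful plumbing pattern placing each standard generator in one of the two models, together with a verification — via $\textrm{T}_Y$ and the change-of-framing rule under plumbing — that the odd euler number supplies exactly the parity correction that $\D^4$ cannot, namely the odd framing forbidden by Lemma \ref{mainlemma}. By contrast, parts (a) and (b) are short applications of the Rokhlin quadratic form obstruction and of the Hopf annulus trick, respectively.
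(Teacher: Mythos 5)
Your parts (a) and (b) are essentially correct and close in spirit to the paper. For (a) the paper takes a slightly different route: it first observes that plumbing two flexible annulus embeddings yields a flexible embedding of $\Sigma_{1,1}$, plumbs two copies of $\mathcal{R}(K,2m)$, and then applies part (a) of Lemma \ref{mainlemma} to the core curve, which has become essential in $\Sigma_{1,1}$; you instead apply part (b) of Lemma \ref{mainlemma} directly to the boundary-parallel core of the annulus. Your version is more direct, and the doubling argument inside Lemma \ref{mainlemma}(b) does exactly what you say. Two caveats, both of which your argument shares with the paper's rather than introduces: Lemma \ref{mainlemma} requires $f$ to be characteristic and (from the standing hypothesis of section \ref{sec3}) $H_1(W;\Z)=0$, neither of which follows from $\partial W=\s^3$ alone — the paper simply asserts that its plumbed embedding is characteristic; and the framing computation of subsection \ref{ssec2} that you quote is stated there for unknotted curves, while the core of $\mathcal{R}(K,2m)$ is the knot $K$. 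Your part (b) is in effect the paper's proof: it is Lemma \ref{lemmai3} with $n=0$ (slice disk $=$ core of a $0$-framed $2$-handle, the $\pm1$-framed annulus meets the boundary $\s^3$ of the handle's $\D^2\times\D^2$ in a Hopf annulus, then Proposition \ref{pr1}).

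Part (c) is where your proposal breaks. Your second local model — realizing the remaining twist by a fibered Dehn twist $\textrm{T}_Y$ that rotates the disk-bundle fibers — cannot produce a single Dehn twist along a non-separating curve, and the problematic Humphreys generator ($b_2$ in the paper's notation, the only one whose surface framing comes out even) is non-separating. Concretely: for $\textrm{T}_Y$, supported on an embedded $Y\times[0,1]$, to carry $f(\Sigma_{g,1})$ to itself and induce a twist, the surface must meet the support in orbit cylinders of the circle action. Since $H_3$ of the ambient manifold vanishes, $Y$ separates it, and the complementary piece not containing $\partial\Sigma_{g,1}$ meets the surface in a compact subsurface whose boundary consists exactly of the circles where those cylinders cross. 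If the only cylinder were the one around $b_2$, then $b_2$ would bound a subsurface of $\Sigma_{g,1}$, i.e. be separating — a contradiction. This is not a technicality: such flow tricks only ever induce products of twists along null-homologous multicurves, which is exactly what one sees in the paper ($\textrm{T}_{\s^3}$ in section \ref{sec6} induces products $\tau_{U_1}\circ\cdots\circ\tau_{U_g}$, and the tube trick, Proposition \ref{tubetrick}, induces only $\tau^2$). Relatedly, your assertion that the fibered Dehn twist ``exists because $e$ is odd'' is off: $\textrm{T}_Y$ exists for every euler number, so parity plays no role in its existence.

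What the odd euler number actually buys — and what the paper does instead — is a $2$-handle of odd framing. In the paper's construction all Humphreys generators except $b_2$ lie on $-1$-framed plumbed Hopf bands (your model (i)), while $b_2$ has surface framing $-2$; the manifold is then built by attaching a $2$-handle along an unknot isotopic to $b_2$ with framing $-1$ (resp.\ $\mp3$), so $b_2$ bounds the core disk with framing difference $\pm1$, and Lemma \ref{lemmai3} — the Hopf annulus trick applied inside that handle, where the surface only touches $\partial(\D^2\times\D^2)$ and does not cross it, so no separation constraint arises — gives extendibility of $\tau_{b_2}$. Oddness of $e$ also makes the intersection form odd, so the null-homologous surface is non-characteristic and the obstruction of Lemma \ref{mainlemma} disappears; that is the part of the mechanism you did identify correctly. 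In summary, (a) and (b) stand (with the same implicit hypotheses as the paper), but (c) as proposed contains a step that would fail, and the handle-plus-Hopf-annulus argument of the paper, not a fibered Dehn twist, is what completes the proof.
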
  
	
	\noindent Statement $(c)$ in Theorem \ref{thm2} was already known due to the works of Hirose--Yasuhara \cite{hy} and Etnyre--Lekili \cite{EL}.
	
	\vspace{0.25cm}
	
	\noindent Since every knot in $\s^3$ is slice in $\s^2 \times \s^2 \setminus int(\D^4)$, Theorem \ref{thm2} implies the following.
	
	\begin{corollary} \label{cor2}
		For a knot $K \subset \s^3$, $\mathcal{R}(K,\pm1)$ is flexible in $\s^2 \times \s^2 \setminus int(\D^4)$.
	\end{corollary}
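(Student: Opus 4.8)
The plan is to read off Corollary \ref{cor2} directly from statement $(b)$ of Theorem \ref{thm2}. I would take $W^4 = \s^2 \times \s^2 \setminus int(\D^4)$; removing an open ball from the closed manifold $\s^2 \times \s^2$ leaves boundary $\partial W^4 = \s^3$, so $W^4$ satisfies the hypothesis of Theorem \ref{thm2}. That theorem's part $(b)$ asserts that whenever $K$ is smoothly slice in $W^4$, the ribbon embedding $\mathcal{R}(K,\pm 1)$ is flexible in $W^4$. Consequently the entire statement collapses to the single input: \emph{every knot $K \subset \s^3$ is smoothly slice in $\s^2 \times \s^2 \setminus int(\D^4)$}.

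To supply this input I would recall the standard stabilized-slicing construction. Any knot $K$ can be unknotted by finitely many crossing changes, and realizing each crossing change by a clasp exhibits $K$ and the unknot as the boundary of a properly immersed annulus in $\s^3 \times [0,1]$ whose only singularities are finitely many transverse double points. Capping the unknot end with a trivial disk then yields a properly immersed disk $\Delta$ in $\D^4 \subset W^4$ with $\partial \Delta = K$, whose double points are the sole obstruction to embeddedness.

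Next I would remove these double points using the two essential $2$-spheres $A = \s^2 \times \{pt\}$ and $B = \{pt\} \times \s^2$ of the $\s^2 \times \s^2$ factor. They satisfy $A \cdot A = B \cdot B = 0$ and meet transversally in a single point, so they constitute a geometrically dual pair of square-zero spheres. This dual pair is precisely the configuration that lets one trade self-intersections of an immersed disk for embeddedness: by tubing into parallel copies of $A$ and $B$ (the Norman trick) one makes the relevant Whitney disks embedded and correctly framed without introducing new obstructions, and the subsequent Whitney moves eliminate the double points. This is the familiar phenomenon that a single $\s^2 \times \s^2$-stabilization suffices to convert an immersed disk bounding a knot into an embedded one, so $K$ is slice in $\s^2 \times \s^2 \setminus int(\D^4)$.

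Since this sliceness statement is classical, the genuine work of the corollary is already carried out in the proof of Theorem \ref{thm2}, and I do not anticipate a substantive obstacle. The only care required is the bookkeeping of the tubing construction: arranging the Whitney disks, their tubes, and the parallel copies of $A$ and $B$ to be pairwise disjoint and disjoint from the boundary collar carrying $K$, so that the output is a genuinely embedded proper disk. Feeding $W^4 = \s^2 \times \s^2 \setminus int(\D^4)$ into statement $(b)$ of Theorem \ref{thm2} then gives the flexibility of $\mathcal{R}(K,\pm 1)$ asserted in Corollary \ref{cor2}.
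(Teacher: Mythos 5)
Your proposal matches the paper's proof: the paper derives Corollary \ref{cor2} in one line by combining statement $(b)$ of Theorem \ref{thm2} (with $W^4 = \s^2 \times \s^2 \setminus int(\D^4)$, $\partial W^4 = \s^3$) with the classical fact that every knot in $\s^3$ is smoothly slice in $\s^2 \times \s^2 \setminus int(\D^4)$ --- exactly the reduction you make. Your extra sketch of that classical fact is substantively fine (the paper simply cites it without proof), with the minor caveat that the double points of the immersed disk are eliminated directly by the Norman trick of tubing into disjoint parallel copies of the square-zero dual spheres, so no Whitney disks or Whitney moves are actually needed.
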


	We need the following lemma from \cite{hy}.

	\begin{lemma} \label{lemmai3}
		
		Let $V^4$ be a smooth $4$-manifold with boundary $\s^3$. Let $K$ be a knot in $\s^3$ such that $K$ bounds the core disk of an $n$-framed $2$-handle embedded in $V^4$.  Then $\mathcal{R}(K,n\pm1)$ is flexible in $V^4$.
	\end{lemma}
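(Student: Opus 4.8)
The plan is to use the Hopf annulus trick, together with the fact that the mapping class group of an annulus rel boundary is cyclic. Write $A$ for the annulus underlying $\mathcal{R}(K,n\pm1)$ and let $\alpha$ be its core circle; since $\mathcal{MCG}(A,\partial A)$ is generated by $\tau_\alpha$, it suffices to prove that $\tau_\alpha$ is $\mathcal{R}(K,n\pm1)$-extendible.

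First I would exhibit the embedded $4$-ball required by the Hopf annulus trick. By hypothesis the $n$-framed $2$-handle gives an embedded copy of $\D^2\x\D^2$ in $V^4$ meeting $\partial V=\s^3$ exactly in the attaching region $\partial\D^2\x\D^2=\nu(K)$, with core disk $\D^2\x\{0\}$ bounding $K=\partial\D^2\x\{0\}$. Its boundary $\partial(\D^2\x\D^2)=\s^3$ carries the standard Hopf link $\partial\D^2\x\{0\}\cup\{0\}\x\partial\D^2$, and hence a Hopf annulus $\mathcal{A}$ whose core is $K$. I would take this $\D^2\x\D^2$ to be the $4$-ball in the Hopf annulus trick, noting that its belt part $\D^2\x\partial\D^2$ lies in the interior of $V$ while the attaching part lies on $\partial V$.

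The crux is the framing bookkeeping. A radial product annulus $\partial\D^2\x\ell\subset\nu(K)$, where $\ell$ is a radial arc from $0$ to a nonzero point of $\D^2$, has core $K$ and, since it lies in $\partial V$, its surface framing there equals the handle framing $n$; its second boundary is the $n$-framed pushoff $K_n$. A positive (resp. negative) Hopf annulus $\mathcal{A}$ on $\partial(\D^2\x\D^2)$ differs from this product annulus by exactly one positive (resp. negative) full twist pushed into the belt region $\D^2\x\partial\D^2$, so the second boundary of $\mathcal{A}$ becomes the $(n\pm1)$-framed pushoff of $K$. Thus the surface framing of $K$ determined by $\mathcal{A}$ is $n\pm1$, matching the framing of the core of $\mathcal{R}(K,n\pm1)$. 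I expect this step---verifying that the Hopf annulus on $\partial(\D^2\x\D^2)$ realizes precisely the $(n\pm1)$-framing of $K$ measured in $V$, with the correct sign attached to each of the two Hopf annuli---to be the main obstacle.

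With the framings matched, I would isotope $\mathcal{R}(K,n\pm1)$ rel boundary so that a neighborhood $\mathcal{N}_A(\alpha)$ of its core coincides with $\mathcal{A}$ and lies on $\partial(\D^2\x\D^2)$ without entering the interior of the $4$-ball; the framing computation guarantees such an isotopy exists, and Lemma \ref{4.4} shows that extendibility is unaffected by it. The general form of the Hopf annulus trick stated after Proposition \ref{pr1} then applies verbatim and yields that $\tau_\alpha$ is $\mathcal{R}(K,n\pm1)$-extendible. Since $\tau_\alpha$ generates $\mathcal{MCG}(A,\partial A)$, the embedding $\mathcal{R}(K,n\pm1)$ is flexible in $V^4$, with the two signs handled by the positive and negative Hopf annuli respectively.
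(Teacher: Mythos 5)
Your framing computation is correct, and it is the same computation the paper uses: the surface framing $n\pm1$ of the core differs from the handle framing $n$ by $\pm1$, which is exactly what produces a Hopf annulus on the boundary of a suitable $\D^2\times\D^2$. The gap is in \emph{where} you place that $\D^2\times\D^2$. You take it to be the $2$-handle itself, whose attaching region $\partial\D^2\times\D^2=\nu(K)$ lies on $\partial V$, and you take the Hopf annulus $\mathcal{A}$ to have core $K$ and one boundary component equal to $K\subset\partial V$. This breaks the argument in two places. First, the isotopy you invoke cannot exist: $\mathcal{R}(K,n\pm1)$ is a \emph{proper} embedding, so its core $\alpha$ lies in the interior of $V$, and an isotopy rel boundary (through proper embeddings) keeps it there; it can never carry $\alpha$, or a neighborhood $\mathcal{N}_A(\alpha)$ of it, onto an annulus whose interior runs through $\nu(K)\subset\partial V$ --- let alone onto an annulus containing $K$, which is one of the pointwise-fixed boundary components of the surface, so the resulting map would not even be injective. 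Hence Lemma \ref{4.4} never gets to be applied. Second, even setting properness aside, the diffeomorphism produced by the Hopf annulus trick is supported in a bicollar of the $3$-sphere $\partial(\D^2\times\D^2)$; since that $3$-sphere contains $\nu(K)\subset\partial V$, the diffeomorphism is not the identity near $\partial V$, so it does not witness extendibility in the sense of the paper's definition (which requires the extending diffeomorphism to restrict to the identity near $\partial V$). Indeed, no diffeomorphism that is the identity near $\partial V$ can induce a Dehn twist along a curve meeting $\partial V$.

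The missing idea is a collar maneuver, and it is precisely how the paper's proof proceeds. Take a collar $\s^3\times[0,1]$ of $\partial V$ with $\s^3\times\{1\}=\partial V$, and arrange the Seifert type embedding so that the ribbon sits at the interior level $\s^3\times\{0\}$, with core circle $c$ (a copy of $K$) at that level. The hypothesis then provides an embedded $2$-disk $D$ in the interior of $V$ with $\partial D=c$, $D\cap\mathcal{R}(K,n\pm1)=c$, and normal framing $n$ along $c$ (push the core disk of the handle down the collar). Now the correct $\D^2\times\D^2$ is the normal bundle $\nu_D$ of $D$, which lies entirely in the interior of $V$. Since the surface framing of $c$ is $n\pm1$ and the disk framing is $n$, the surface meets $\partial\nu_D$ in a Hopf annulus with core $c$, and the Hopf annulus trick applies with all of its support away from $\partial V$; flexibility then follows, as you say, because $\tau_c$ generates $\mathcal{MCG}(A,\partial A)$.
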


	\begin{proof}
		Consider a collar neighborhood of the boundary $\s^3$ in $V$, diffeomorphic to $\s^3 \times [0,1]$ such that $\s^3 \times \{1\} = \partial V$. Let $c$ be the core circle of the $(n \pm 1)$--framed ribbon ($\subset \mathcal{R}(K,n\pm1)$) at the level $\s^3 \times \{0\}$. Since $K$ bounds the core disk of an $n$-framed $2$-handle, there is a $2$-disk $D^2$ embedded in $V^4$ such that $c = \partial D^2$ and $D^2 \cap \mathcal{R}(K, n\pm1)=c$. The normal bundle, $\nu_{D^2}$, is isomorphic to $\mathbb{D}^2 \times \mathbb{D}^2$. Since the surface framing of $c$ is $n \pm1$, $\nu_{\mathbb{D}^2} \cap \mathcal{R}(K, n\pm1)$ is a Hopf annulus in the boundary of $\nu_ {D^2}$ with core curve $c$. By the Hopf annulus trick, $\tau_c$ is extendible and hence, $\mathcal{R}(K,n\pm1)$ is flexible.
	\end{proof}

	\begin{proof}[Proof of Theorem \ref{thm2}] Given two framed knots $(K_1, n_1)$ and $(K_2, n_2)$, let $R(K_1,n_1)$ and $R(K_2,n_2)$ be their corresponding ribbon surfaces respectively. Let $P((K_1,n_1), (K_2,n_2)) = R(K_1,n_1) \natural R(K_2,n_2)$ denote the surface obtained by plumbing $R(K_1,n_1)$ and $R(K_2,n_2)$. Let $\mathcal{R}((K_1,n_1), (K_2,n_2))$ denote the Seifert type embedding in $\s^3 \times [-1,1]$ corresponding to $P((K_1,n_1), (K_2,n_2))$. Observe that if $\mathcal{R}(K_1,n_1)$ and $\mathcal{R}(K_2,n_2)$ are flexible embeddings then so is the embedding $\mathcal{R}((K_1,n_1), (K_2,n_2))$. It can be seen in the following way. We can isotope $\mathcal{R}((K_1,n_1), (K_2,n_2))$ so that $P((K_1,n_1), (K_2,n_2))$ is contained at the level $\subset \s^3 \times \{0\}$ of the collar $\s^3 \times [-1,1]$. We first isotope a closed collar neighborhood $\mathcal{N}(K_1)\subset P((K_1,n_1), (K_2,n_2))$ of $K_1$ to the level $\s^3 \times \{ \frac{1}{2} \}$, relative to $\partial \mathcal{N}(K_1)$. Since the Dehn twist $\tau_{K_1}$ is extendible for $\mathcal{R}((K_1,n_1), (K_2,n_2))$, one can apply a diffeomorphism $\Phi_1$ of $W \setminus (\s^3 \times (0,1])$ that is identity near boundary and that induces $\tau_{K_1}$. Here, we think of $K_1$ as the core curve of the collar $\mathcal{N}(K_1)$. We then isotope $\mathcal{N}(K_1)$ back to its original position and repeat the same process for a closed collar neighborhood of $K_2$ in $P((K_1,n_1), (K_2,n_2))$ to get a diffeomorphism $\Phi_2$ that induces the Dehn twist $\tau_{K_2}$. Combining $\Phi_1$, $\Phi_2$ along with the ambient isotopies of $V$, we can induce any mapping class generated by $\tau_{K_1}$ and $\tau_{K_2}$ for the embedding $\mathcal{R}((K_1,n_1), (K_2,n_2))$. Since $\mathcal{MCG}(\Sigma_{1,1}, \partial \Sigma_{1,1})$ is generated by $\tau_{K_1}$ and $\tau_{K_2}$, $P((K_1,n_1), (K_2,n_2))$ is flexible. 
		
	\noindent We now prove the statements in Theorem \ref{thm2}.

		\begin{enumerate}
			\item[(a)] Say $\mathcal{R}(K,2m)$ is flexible in $W$. By plumbing two copies of $\mathcal{R}(K,2m)$, we get a flexible proper embedding $h$ of $\Sigma_{1,1}$ in $W$ such that $h$ is characteristic. From the examples in subsection \ref{ssec2}, one can see that $q_f([K]) = 0$. By Lemma \ref{mainlemma}, we get a contradiction. 
			
			\vspace{0.1cm}
			
			\item[(b)] Since $K$ is slice in $W$, $K$ bounds the core disk of a $0$-framed $2$-handle in $W^4$. The flexibility of $\mathcal{R}(K,\pm1)$ then follows from Lemma \ref{lemmai3}.
			
			\vspace{0.1cm}
			
			\item[(c)] Consider the set of curves $\mathcal{H} = \{a_1,c_1,a_2,c_2,...a_{g-1},c_{g-1},a_g,b_1,b_2\}$ on $\Sigma_{g,1}$, as shown in figure \ref{humphreygen}. Consider the surface $S$ in $\s^3$ obtained by consecutively plumbing $2g$ copies of a negative Hopf annulus, $R(U, -1)$ (see figure \ref{fig-12}). Let $f_S$ be a Hopf type embedding in $\s^3 \times [-1,1]$ (boundary embeds in $\s^3 \times \{1\}$) corresponding to $S$. Thus, $f_S$ gives an embedding of $\Sigma_{g,1}$ such that all the curves in $\mathcal{H}$, except $b_2$, has surface framing $-1$. The curve $b_2$ has surface framing $-2$. We now attach a $2$-handle to $\s^3 \times \{-1\}$ along an unknot with framing $-1$ and cap off the new boundary component (a copy of $\s^3$). This gives a proper embedding $\tilde{f}_S$ of  $\Sigma_{g,1}$ in $\CP^2 \setminus int(\D^4)$, with same surface framings for the curves in $\mathcal{H}$. By the Hopf annulus trick, $\tau_{\gamma}$ is $\tilde{f}_S$-extendible for all $\gamma \in \mathcal{H} \setminus \{b_2\}$. It is clear that the curve $b_2$ bounds the core disk of a $2$-handle with framing $-1$ in $\CP^2 \setminus int(\D^4)$. Therefore, by Lemma \ref{lemmai3}, $\tau_{b_2}$ is also $\tilde{f}_S$-extendible. Since Dehn twists along the curves in $\mathcal{H}$ generate $\mathcal{MCG}(\Sigma_{g,1}, \partial \Sigma_{g,1})$, $\tilde{f}_S$ is flexible.

			\begin{figure}[htbp] 
				
				\centering
				\def\svgwidth{14cm}
				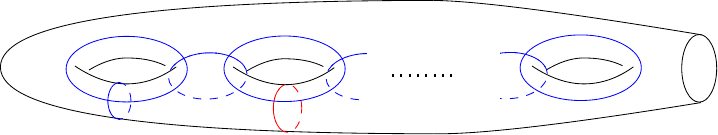
				\caption{ The Humphreys generating set $\mathcal{H} = \{a_1,c_1,a_2,c_2,...a_{g-1},c_{g-1},a_g,b_1,b_2\}$}
			\label{humphreygen}

			\end{figure}
		\end{enumerate}
		
	\end{proof}

	\noindent Our next theorem gives more examples of extendible Dehn twists along a curve on an embedded surface. 
	
	\begin{theorem} \label{thm3}
		Let $f: F \rightarrow \D^4$ be a Seifert type embedding, and let $\alpha, \beta \subset F$ be unknots (in $\s^3$) such that $|\alpha \cap \beta| = 1$.
		\begin{enumerate}
			\item[(a)] Say the surface framing of $\alpha$ is odd and the surface framing of $\beta$ is zero. If $\tau_\beta (\alpha)$ is unknot, then $\tau_{\alpha}$ is $f$-extendible.
			\item[(b)] Say $\alpha$ and $\beta$ have surface framings $2n+1$ and $-k(2n+1)$, respectively, for some $n,k \in \mathbb{Z}$ and $k$ odd. If $ \tau_\alpha$ is $f$-extendible and $\tau_\alpha(\beta)$ is unknot, then $\tau_\beta$ is $f$-extendible,
			\item[(c)] Say the surface framings of $\alpha$ and $\beta$ are $ 2n+1 $ and $-(2n+3)$, respectively. If $\tau_\alpha(\beta)$ (and $\tau_\beta(\alpha)$) is unknot, then $\tau_{\alpha}$ is $f$-extendible if and only if $\tau_{\beta}$ is $f$-extendible.
		\end{enumerate}
	\end{theorem}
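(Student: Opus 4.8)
The common engine for all three parts is the following. Recall from the excerpt that the $f$-extendible classes form a subgroup of $\mathcal{MCG}(F,\partial F)$, and that for any extendible $\psi$ one has $\psi\tau_\gamma\psi^{-1}=\tau_{\psi(\gamma)}$; hence $\tau_\gamma$ is extendible if and only if $\tau_{\psi(\gamma)}$ is. I would combine this with the two local extension moves available for a Seifert type embedding in $\D^4$: the \emph{Hopf annulus trick}, which makes $\tau_\gamma$ extendible whenever $\gamma\subset F$ is an unknot in $\s^3$ of surface framing $\pm1$ (it then bounds a disk pushed into $\D^4$ whose normal $\D^2\times\D^2$ meets $F$ in a Hopf annulus), and the \emph{tube trick} (Proposition \ref{tubetrick}), which makes $\tau_\gamma^2$ extendible when $\gamma$ is a $0$-framed unknot. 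The arithmetic is governed by the framing convention stated in the earlier Remark: when $|\alpha\cap\beta|=1$, the curve $\tau_\alpha^{\,j}(\beta)$ represents $[\beta]+j[\alpha]$ and has surface framing $\mathrm{fr}(\tau_\alpha^{\,j}(\beta))=\mathrm{fr}(\beta)+j^2\mathrm{fr}(\alpha)+j$. The uniform strategy is to use twists already known to be extendible to move the target curve onto a $\pm1$-framed unknot, apply the Hopf trick there, and transport extendibility back by conjugation.

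For \textbf{(c)} this is immediate. With $\mathrm{fr}(\alpha)=2n+1$ and $\mathrm{fr}(\beta)=-(2n+3)$ the framing formula gives $\mathrm{fr}(\tau_\alpha(\beta))=\mathrm{fr}(\tau_\beta(\alpha))=(2n+1)-(2n+3)+1=-1$, so both $\tau_\alpha(\beta)$ and $\tau_\beta(\alpha)$ are $(-1)$-framed unknots, and the Hopf annulus trick makes $\tau_{\tau_\alpha(\beta)}$ and $\tau_{\tau_\beta(\alpha)}$ extendible. Since $\tau_{\tau_\alpha(\beta)}=\tau_\alpha\tau_\beta\tau_\alpha^{-1}$, one has $\tau_\beta=\tau_\alpha^{-1}\,\tau_{\tau_\alpha(\beta)}\,\tau_\alpha$; if $\tau_\alpha$ is extendible this writes $\tau_\beta$ as a product of extendible classes, hence extendible. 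The symmetric identity $\tau_\alpha=\tau_\beta^{-1}\,\tau_{\tau_\beta(\alpha)}\,\tau_\beta$ gives the converse, proving the equivalence.

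For \textbf{(a)}, since $\mathrm{fr}(\beta)=0$ the tube trick makes $\tau_\beta^{2}$ extendible, hence every power $\tau_\beta^{2m}$ is extendible. The framing formula gives $\mathrm{fr}(\tau_\beta^{2m}(\alpha))=(2n+1)+2m$, and taking $m=-n$ yields surface framing $1$. If $\tau_\beta^{-2n}(\alpha)$ is an unknot, the Hopf trick makes $\tau_{\tau_\beta^{-2n}(\alpha)}$ extendible, and conjugating by the extendible $\tau_\beta^{-2n}$ returns $\tau_\alpha=\tau_\beta^{2n}\,\tau_{\tau_\beta^{-2n}(\alpha)}\,\tau_\beta^{-2n}$, which is therefore extendible. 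The subtle point, and the precise role of the hypothesis that $\tau_\beta(\alpha)$ is an unknot, is to guarantee that the intermediate curves $\tau_\beta^{2m}(\alpha)$ remain unknotted in $\s^3$; I would establish this by an isotopy argument in the standard twist region around the $0$-framed unknot $\beta$, propagating unknottedness from the hypothesized base case, and this is the step I would justify most carefully.

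For \textbf{(b)}, the same conjugation reduces the problem to showing $\tau_\gamma$ is extendible for the unknot $\gamma:=\tau_\alpha(\beta)$, since $\tau_\beta=\tau_\alpha^{-1}\tau_\gamma\tau_\alpha$ and $\tau_\alpha$ is extendible by hypothesis. Here $\mathrm{fr}(\gamma)=(2n+1)+\bigl(-k(2n+1)\bigr)+1=(2n+1)(1-k)+1$, which is odd exactly because $k$ is odd. When $k=1$ this equals $1$, so $\gamma$ is a $(+1)$-framed unknot and the Hopf trick finishes at once. The hard part will be general odd $k$: the framings reachable inside the $\tau_\alpha$-orbit are $\{(2n+1)(j^2-k)+j\}$, which need not contain $\pm1$, so re-twisting $\gamma$ by $\tau_\alpha$ alone does not suffice. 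My plan is to pass to the binary quadratic form $q(x,y)=(2n+1)x^2+xy-k(2n+1)y^2$ (for which $k$ odd forces all relevant framings to be odd, matching the target $\pm1$), locate a primitive class $v$ with $\mathrm{fr}(v)=\pm1$ carried by a simple closed curve, and bootstrap extendibility along a Euclidean/continued-fraction chain of $\pm1$-framed unknots joining $\alpha$ to $v$, applying the Hopf trick at each link and propagating through the braid relation $\tau_\alpha\tau_\beta\tau_\alpha=\tau_\beta\tau_\alpha\tau_\beta$. The main obstacle is to verify that every curve appearing in this chain is genuinely an unknot in $\s^3$, which is exactly where the unknottedness hypothesis on $\tau_\alpha(\beta)$ must be carried through the reduction.
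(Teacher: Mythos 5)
For parts (a) and (c) your proposal is correct and is essentially the paper's own argument: the paper likewise applies the tube trick (Proposition \ref{tubetrick}) to the $0$-framed unknot $\beta$, the Hopf annulus trick to a resulting $\pm1$-framed unknotted curve, and then transports extendibility back to $f$. The only cosmetic difference is in the transport step: the paper realizes the auxiliary twists by an ambient isotopy and invokes Lemma \ref{4.4}, whereas you stay with $f$ and use the conjugation identity $\tau_{\psi(\gamma)}=\psi\tau_\gamma\psi^{-1}$ inside the subgroup of extendible classes; these are interchangeable. Both treatments also share the same soft spot in (a): one needs the intermediate curve $\tau_\beta^{-2n}(\alpha)$ to be unknotted. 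The paper disposes of this by asserting that a tubular neighborhood of $f(\alpha)\cup f(\beta)$ is the standard plumbing $R(\alpha,2n+1)\natural R(\beta,0)$ (this is where the hypothesis that $\tau_\beta(\alpha)$ is unknotted is implicitly spent), while you defer it to an unexecuted isotopy argument; neither is airtight, but you have reproduced the paper's route.

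Part (b) is where you genuinely diverge, and the situation is the opposite of what you feared: the difficulty you flagged is real, and it is the paper's proof that does not survive your (correct) arithmetic. The paper's proof of (b) is exactly your $k=1$ argument run for arbitrary odd $k$: it claims that the curve $\tau_\alpha^k(\beta)$ has surface framing $1$ and applies the Hopf trick there. But the surface framing is the Seifert self-linking, which is quadratic in the twisting power --- precisely your formula $\operatorname{fr}(\tau_\alpha^j(\beta))=\operatorname{fr}(\beta)+j^2\operatorname{fr}(\alpha)+j$ --- so in fact $\operatorname{fr}(\tau_\alpha^k(\beta))=(2n+1)k(k-1)+k$, which equals $1$ only for $k=1$. (The paper's value would follow from the linear count $\operatorname{fr}(\beta)+k\operatorname{fr}(\alpha)+1$, which is inconsistent with iterating the paper's own framing Remark.) Your worry that the orbit framings $(2n+1)(j^2-k)+j$ may never hit $\pm1$ is also sharp: for $n=0$, $k=9$ the values $j^2+j-9$ miss $\pm1$ for every $j$, so no iterate of $\tau_\alpha$ applied to $\beta$ can be handled by the Hopf trick at all, and the paper supplies no alternative mechanism.

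So the honest summary for (b) is: your proposal proves the case $k=1$ (by the same method as the paper), correctly identifies that the method breaks for $k\geq3$, and offers only a speculative repair (the quadratic-form/chain scheme is not carried out, and it has its own problems --- e.g.\ for $n=0$, $k=9$ the form does represent $-1$, at the class $5\alpha+2\beta$, but that curve meets $\alpha$ twice, and nothing in the paper's toolkit converts extendibility of its twist into extendibility of $\tau_\beta$). That is a genuine gap in your proposal, but it mirrors a genuine gap in the paper: as written, the paper's proof of (b) is only valid for $k=1$.
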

	
	\begin{proof}
		\begin{enumerate}
			\item[(a)] Suppose that  the surface framing of $\alpha$ is $2m+1$, for some $m \in \mathbb{Z}.$ Then surface framing of the unknotted curve $\tau^{2k}_\beta(\alpha)$ is $2m + 1 + 2k$, and the surface framing of the unknotted curve $\tau^{-2k}_\beta(\alpha)$ is $2m+1-2k$. We assume that $2m+1 > 0$. Since, $|\alpha \cap \beta| = 1$, a tubular neighborhood of $f(\alpha) \cup f(\beta)$ is istopic to the $R(\alpha, 2m+1) \natural R(\beta, 0)$ (say in $\s^3 \times \{\frac{1}{2}\}$). By the tube trick, there is an isotopy of $\D^4$ (relative to boundary) $H_t$ ($t \in [0,1]$) such that $H_1$ induces the mapping class $\tau_{\beta}^{-2m}$ on the embedded surface. We note that the surface framing of the curve $\tau_{\beta}^{-2m}(\alpha)$ is $1$. Thus, a tubular neighborhood of $\tau_\beta^{-2m}(\alpha)$ on the surface is a positive Hopf annulus in $\mathbb{S}^3 \times \{\frac{1}{2}\}$. Thus, $\tau_\alpha$ is $H_1 \circ f$-extendible. Then, by Lemma \ref{4.4},  $\tau_\beta $ is also $f$-extendible. For $2m+1 < 0 $, we apply the tube trick to induce $ \tau_{\beta}^{2m}$ on the embedded surface and use a similar argument to isotope a tubular neighborhood of $\beta$ into a negative Hopf annulus.
			
			\vspace{0.1cm}
			
			\item[(b)] We assume that $2n+1 >0$ and $-k(2n+1) < 0$. Thus, surface framing of the unknotted curve $\tau_\alpha^k(\beta)$ is $1$. Since $\tau_\alpha$ is $f$-extendible, there exists a diffeomorphism $G$ of $\D^4$ (relative to boundary) such that $G \circ f = f \circ \tau_\alpha^k$. Since the tubular neighborhood of $\tau_{\alpha}^k(\beta)$ now is a Hopf annulus in the embedded surfaces, by the Hopf annuus trick, there is a relative isotopy $F_t$ of $\D^4$ such that $F_1 \circ G \circ f = G \circ f \circ \tau_\beta$. Thus, $G^{-1} \circ F_1 \circ G \circ f = f \circ \tau_\beta$. Hence, by Lemma \ref{4.4}, $\tau_\beta$ is $f$-extendible.
			
			\vspace{0.1cm}
			
			\item[(c)] Assume that $\tau_\alpha$ is $f$-extendible and $n > 0$. Then, the surface framing of the unknotted curves $\tau_\alpha(\beta)$ and $\tau_\beta(\alpha)$ is $-1$. The result then follows from similar arguments as in statement $(b)$ above.
		\end{enumerate}
	\end{proof}

	\begin{figure}[htbp] 
		
		\centering
		\def\svgwidth{14cm}
\begingroup%
  \makeatletter%
  \providecommand\color[2][]{%
    \errmessage{(Inkscape) Color is used for the text in Inkscape, but the package 'color.sty' is not loaded}%
    \renewcommand\color[2][]{}%
  }%
  \providecommand\transparent[1]{%
    \errmessage{(Inkscape) Transparency is used (non-zero) for the text in Inkscape, but the package 'transparent.sty' is not loaded}%
    \renewcommand\transparent[1]{}%
  }%
  \providecommand\rotatebox[2]{#2}%
  \newcommand*\fsize{\dimexpr\f@size pt\relax}%
  \newcommand*\lineheight[1]{\fontsize{\fsize}{#1\fsize}\selectfont}%
  \ifx\svgwidth\undefined%
    \setlength{\unitlength}{1068.13593111bp}%
    \ifx\svgscale\undefined%
      \relax%
    \else%
      \setlength{\unitlength}{\unitlength * \real{\svgscale}}%
    \fi%
  \else%
    \setlength{\unitlength}{\svgwidth}%
  \fi%
  \global\let\svgwidth\undefined%
  \global\let\svgscale\undefined%
  \makeatother%
  \begin{picture}(1,0.23977924)%
    \lineheight{1}%
    \setlength\tabcolsep{0pt}%
    \put(0,0){\includegraphics[width=\unitlength,page=1]{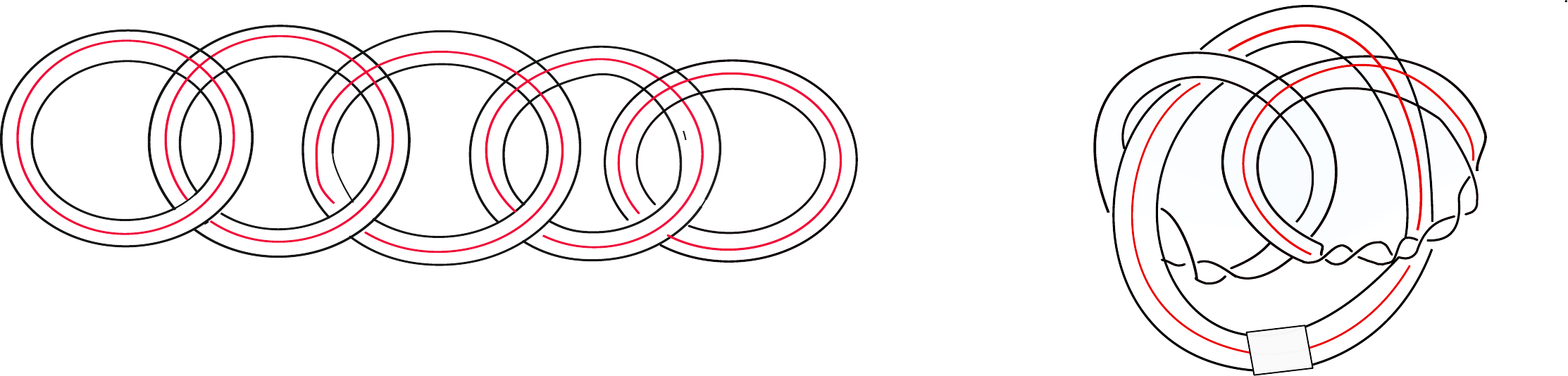}}%
    \put(0.81280024,0.01099976){\color[rgb]{0,0,0}\rotatebox{6.70261666}{\makebox(0,0)[lt]{\lineheight{1.25}\smash{\begin{tabular}[t]{l}$5$\end{tabular}}}}}%
    \put(0,0){\includegraphics[width=\unitlength,page=2]{links.pdf}}%
    \put(0.04393907,0.08431403){\color[rgb]{0,0,0}\rotatebox{1.7161115}{\makebox(0,0)[lt]{\lineheight{1.25}\smash{\begin{tabular}[t]{l}$-1$\end{tabular}}}}}%
    \put(0.16810947,0.08025424){\color[rgb]{0,0,0}\rotatebox{1.7161115}{\makebox(0,0)[lt]{\lineheight{1.25}\smash{\begin{tabular}[t]{l}$3$\end{tabular}}}}}%
    \put(0.26826427,0.07705399){\color[rgb]{0,0,0}\rotatebox{1.7161115}{\makebox(0,0)[lt]{\lineheight{1.25}\smash{\begin{tabular}[t]{l}$-5$\end{tabular}}}}}%
    \put(0.37140052,0.07864352){\color[rgb]{0,0,0}\rotatebox{1.7161115}{\makebox(0,0)[lt]{\lineheight{1.25}\smash{\begin{tabular}[t]{l}$7$\end{tabular}}}}}%
    \put(0.46269742,0.07684342){\color[rgb]{0,0,0}\rotatebox{1.7161115}{\makebox(0,0)[lt]{\lineheight{1.25}\smash{\begin{tabular}[t]{l}$-9$\end{tabular}}}}}%
  \end{picture}%
\endgroup%

		\caption{}
		\label{extexmp}
		
	\end{figure}

	\begin{exmp}
		
		Consider the genus $2$ Seifert surface bounding a $2$-component link, as shown on the left of figure \ref{extexmp} and the corresponding Seifert type embedding in $\s^3 \times [0,1]$. The integers inside the boxes denote the number of full twists in the plumbed annuli. Let $c_i$ ($1\leq i \leq 5$) denote the core curves of these annuli, from left to right. By Theorem \ref{thm3}, $\tau_{c_2}$ is extendible. We can then apply Theorem \ref{thm3} for the curves $c_2$ and $c_3$ to see that $\tau_{c_3}$ is also extendible. In this way, we can see that every $\tau_{c_i}$ is extendible.
	
	\end{exmp}
	
	\begin{exmp}
		
		Consider the genus $1$ Seifert surface bounding a $2$-component link, as shown on the right of figure \ref{extexmp}. Let $a,b,c$ denote the core curves of the three plumbed annuli with twist numbers $1, -3, 5$, respectively. Note that both $\tau_a(b)$ and $\tau_b(c)$ are unknotted. Therefore, by Theorem \ref{thm3}, $\tau_a$, $\tau_b$ and $\tau_c$ are extendible. 
		
	\end{exmp}

	\subsection{Embeddings of $\Sigma_{g,1}$ and Hammenst\"{a}dt generators }
	
	Hirose \cite{hirose1} showed that for the trivial embedding of $\Sigma_g$ in $\mathbb{S}^4$, $\phi \in \mathcal{MCG}(\Sigma_g)$ is extendible if and only if it preserves the Rokhlin quadratic form for $g \geq 1$. He also proved that the \emph{spin mapping class group} of a closed genus-$g$ surface is generated by finitely many Dehn twists. Later, Hammenst\"{a}dt \cite{ursula} provided two generating sets, $\SC_g = \{c_1, c_2, a_1, a_2, \ldots, a_g, b_1, b_2, \ldots, b_{g-1}\}$ and $\SV_g = \{c_1, c_2, c_3, c_4, a_1, a_2, \ldots, a_g, d_1, d_2, \ldots, d_{g-3}\}$, consisting of $2g + 1$ non-separating simple closed curves on a $\Sigma_g$ (see figure \ref{fig4} and figure \ref{fig5}). The curves in $SC_g$ generate the spin mapping class group for odd spin structures on $\Sigma_g$ for $g \geq 3$. The curves in $\SV_g$ generate the spin mapping class group for even spin structures on $\Sigma_g$ for $g\geq4$. We call these curves \emph{Hammenst\"{a}dt generators} or \emph{Hammenst\"{a}dt curves}. It seems natural to ask whether there exists an embedding of $\Sigma_g$ in $\s^4$ that realizes Dehn twists along the Hammenst\"{a}dt curves as extendible elements. We note that the method used in \cite{ursula} to find the Hammenst\"{a}dt curves does not involve codimension $2$ embedding of surfaces and therefore, does not give information about the realizability problem. While it is not clear to us whether such an embedding of $\Sigma_g$ exists in $\s^4$, we can construct a Seifert type embedding of $\Sigma_{g,1}$ in $\mathbb{D}^4$ such that the Hammenst\"{a}dt curves are extendible. It is enough to describe the corresponding Seifert surfaces in $\s^3$. 
	
		\begin{figure}[htbp]
		\centering
		\def\svgwidth{12cm}
		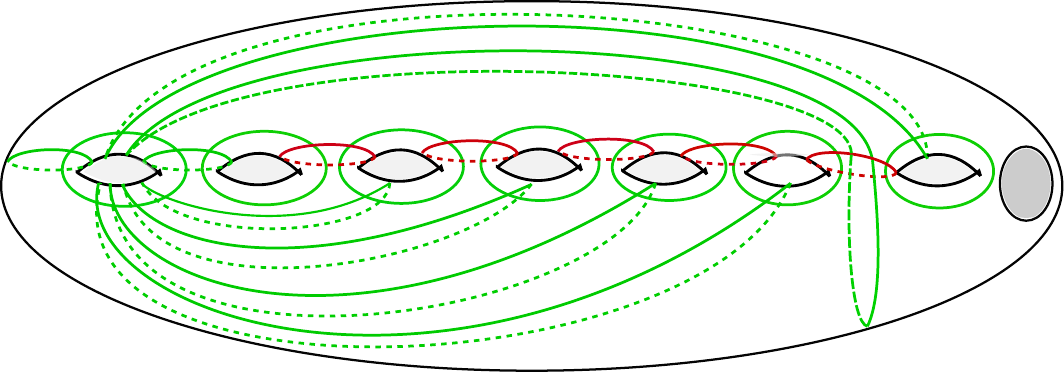
		\caption{Hammenst\(\ddot{a}\)dt curves in $\Sigma_7$ for an odd spin structure.}
		\label{fig4}
	\end{figure} 
	
	\begin{figure}[htbp]
		\centering
		\def\svgwidth{12cm}
		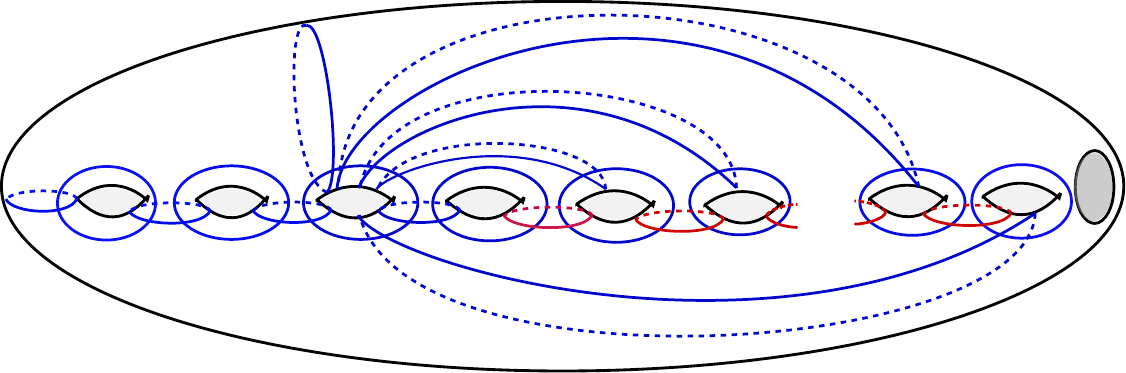
		\caption{Hammenst\(\ddot{a}\)dt curves in $\Sigma_g$ for an even spin structure.}
		\label{fig5}
	\end{figure}

	\smallskip
	\noindent
	\begin{theorem} \label{thmham}
		There exist proper embeddings $f_o$ and $f_e$ of $\Sigma_{g,1}$ in $\mathbb{D}^4$ such that:
		\begin{enumerate}
			\item[(a)] Dehn twists along Hamenst\"{a}dt curves for an odd spin structure on $\Sigma_{g,1}$ are $f_o$-extendible for $g \geq 3$.
			\item[(b)] Dehn twists along Hamenstädt curves for an even spin structure on $\Sigma_{g,1}$ are $f_e$-extendible for $g \geq 4$.
		\end{enumerate}
	\end{theorem}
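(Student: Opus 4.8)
The plan is to realize $f_o$ and $f_e$ as Seifert type embeddings (subsection~\ref{ssec2}) coming from explicit plumbed Seifert surfaces $F_o, F_e \subset \s^3$, in the spirit of the proof of Theorem~\ref{thm2}(c) and the examples following figure~\ref{extexmp}. The organizing principle is that for such an embedding the Rokhlin form of a core curve equals its surface framing mod $2$, and a Dehn twist can preserve a spin structure only along a curve with $q=1$; thus every Hammenst\"{a}dt curve must be realized with \emph{odd} surface framing, while the precise odd value (which does not affect $q_f$) is left free to exploit later. First I would record the intersection pattern of each generating set as a plumbing graph: the curves $a_i,c_i$ form a chain in which consecutive curves meet once, and the extra curves $b_i$ (odd case) or $d_i$ (even case) attach to this chain according to the pattern in figure~\ref{fig4} (resp.\ figure~\ref{fig5}). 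Plumbing one band per generator along this graph produces $F_o$ and $F_e$, and an Euler characteristic count shows the result is $\Sigma_{g,1}$ with connected boundary. Since we are inside $\D^4$, no $2$-handles are available, so---unlike in Theorem~\ref{thm2}(c)---every generator must ultimately be reached using only the Hopf annulus trick and Theorem~\ref{thm3}.

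I would then prove extendibility one generator at a time by propagation, seeding with the Hopf annulus trick. Assign one end of the chain surface framing $\pm1$, so that a neighborhood of that curve is a Hopf annulus and the corresponding Dehn twist is extendible by Proposition~\ref{pr1}. Choosing the remaining (odd) band framings so that each adjacent pair of chain curves satisfies one of the framing conditions of Theorem~\ref{thm3}(b),(c)---for instance framings $2n+1$ and $-(2n+3)$, or $2n+1$ and $-k(2n+1)$ with $k$ odd---lets extendibility be transported from each chain curve to the next, exactly as in the genus~$1$ and genus~$2$ examples before the theorem. The remaining curves $b_i$ (resp.\ $d_i$) can be arranged to meet an already-extendible chain curve once and to be unknots in $\s^3$; with framings satisfying the hypotheses of Theorem~\ref{thm3}(a) or (b), this forces $\tau_{b_i}$ (resp.\ $\tau_{d_i}$) to be extendible as well. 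Because the set of extendible classes is a subgroup of $\mathcal{MCG}(\Sigma_{g,1},\partial\Sigma_{g,1})$, extendibility of all $2g+1$ generators yields the theorem.

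The main obstacle I anticipate is the explicit framing bookkeeping that must simultaneously meet three competing constraints: the parities must match the intended spin structure (all generators odd, with the $b_i$-pattern realizing the odd Arf invariant and the $c_1,\dots,c_4,d_i$-pattern the even one, consistency checkable via the Arf/Rokhlin relation used in Lemma~\ref{mainlemma}); the composite curves $b_i,d_i$ must come out unknotted in $\s^3$ so that Theorem~\ref{thm3} and the Hopf trick apply; and the chosen odd framings must fit the precise arithmetic conditions of Theorem~\ref{thm3} along a spanning tree of the intersection graph. The odd and even cases must be treated separately precisely because the two spin structures force genuinely different framing patterns, and the even case requires the extra room of $g\ge 4$ to position the $d_i$ curves so that the propagation conditions can be met everywhere at once.
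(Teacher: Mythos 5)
There is a genuine gap, and it sits at the foundation of your construction. You propose to build $F_o$ and $F_e$ by ``plumbing one band per generator along this graph'' and assert that ``an Euler characteristic count shows the result is $\Sigma_{g,1}$.'' Done correctly, that count shows the opposite. A connected plumbing of $N$ annuli along a graph with $E$ plumbing points has $\chi = -E$, and connectivity of the intersection graph on $N = 2g+1$ vertices forces $E \geq 2g$; hence $\chi \leq -2g < 1-2g = \chi(\Sigma_{g,1})$. Equivalently, since the $2g+1$ Hammenst\"{a}dt curves fill the closed surface, a regular neighborhood of their union is $\Sigma_{g,b}$ with $b = E - 2g + 2 \geq 2$, never $\Sigma_{g,1}$. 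So no choice of bands, framings, or spanning subgraph realizes all $2g+1$ generators as cores of plumbing bands inside $\Sigma_{g,1}$, and the object your argument manipulates does not exist. This is not a bookkeeping issue one can repair downstream: the entire difficulty of the theorem lives in exactly the curves you converted into free bands. In the paper's proof, $\Sigma_{g,1}$ is realized as the plumbing of only the $2g$ chain annuli $A_1 \# C_1 \# \cdots \# A_g \# C_g$ (which does have $\chi = -(2g-1)$), the framings of those bands are freely chosen, and the remaining generators $b_j$ (resp.\ $d_j$) are \emph{not} bands: they are isotoped into the chain neighborhood, where their surface framings are forced, namely the sum of the framings of the bands they traverse. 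The content of the proof is choosing the chain framings ($\pm1$ on the generator curves $c_1,c_2,a_i$, resp.\ $c_1,\dots,c_4,a_i$, and $0$ on the non-generator $c_i$'s, with the twisted parts of the $A_i$ positioned away from the $b_j$/$d_j$) so that every induced framing comes out $\pm1$ simultaneously; then every generator has a Hopf annulus neighborhood and the Hopf annulus trick plus Lemma \ref{Leic} finishes the proof, with no propagation machinery at all.

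Two further problems would remain even if the surface issue were fixed. First, your plan to handle the $b_i$ via Theorem \ref{thm3}(a) requires the adjacent curve $\beta$ to have surface framing \emph{zero}, which contradicts your own (correct) observation that every generator must have odd framing by Lemma \ref{mainlemma}(a); only part (b) or (c) could be used, and those demand in addition that $\tau_\alpha(\beta)$ (or $\tau_\beta(\alpha)$) be unknotted, a condition you assert can be ``arranged'' but never verify along your spanning tree. Second, the propagation scheme is in any case not the right tool here: when all generators can be given framing $\pm1$ (which is what the theorem ultimately needs), the Hopf annulus trick applies to each of them directly, and Theorem \ref{thm3} buys nothing; when they cannot, Theorem \ref{thm3} does not help with the curves whose framings are not free parameters but are determined by the chain.
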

	
		\begin{figure}[htbp]
		\centering
		\def\svgwidth{9cm}
\begingroup%
  \makeatletter%
  \providecommand\color[2][]{%
    \errmessage{(Inkscape) Color is used for the text in Inkscape, but the package 'color.sty' is not loaded}%
    \renewcommand\color[2][]{}%
  }%
  \providecommand\transparent[1]{%
    \errmessage{(Inkscape) Transparency is used (non-zero) for the text in Inkscape, but the package 'transparent.sty' is not loaded}%
    \renewcommand\transparent[1]{}%
  }%
  \providecommand\rotatebox[2]{#2}%
  \newcommand*\fsize{\dimexpr\f@size pt\relax}%
  \newcommand*\lineheight[1]{\fontsize{\fsize}{#1\fsize}\selectfont}%
  \ifx\svgwidth\undefined%
    \setlength{\unitlength}{932.00478004bp}%
    \ifx\svgscale\undefined%
      \relax%
    \else%
      \setlength{\unitlength}{\unitlength * \real{\svgscale}}%
    \fi%
  \else%
    \setlength{\unitlength}{\svgwidth}%
  \fi%
  \global\let\svgwidth\undefined%
  \global\let\svgscale\undefined%
  \makeatother%
  \begin{picture}(1,0.21740718)%
    \lineheight{1}%
    \setlength\tabcolsep{0pt}%
    \put(0,0){\includegraphics[width=\unitlength,page=1]{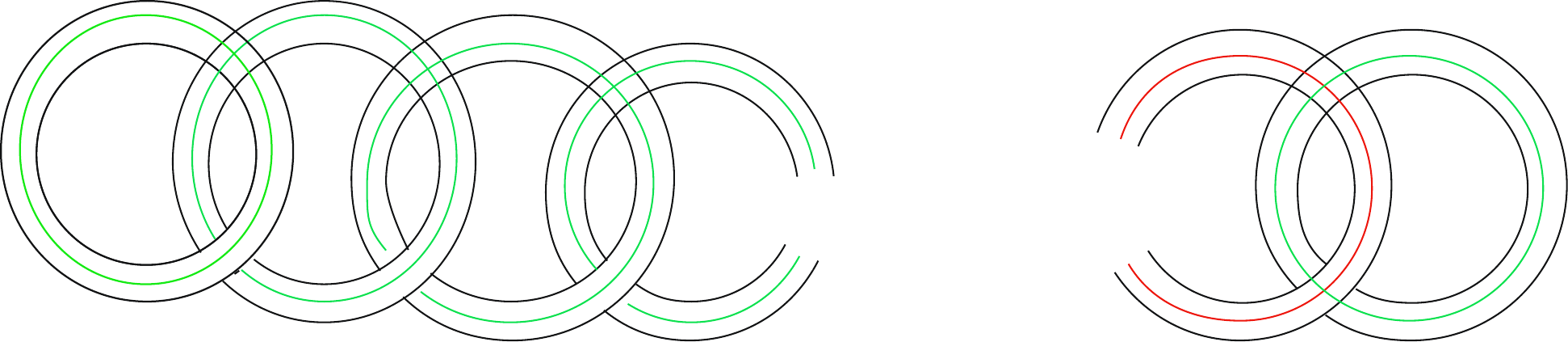}}%
    \put(0.07612454,0.06776067){\color[rgb]{0,0,0}\makebox(0,0)[lt]{\lineheight{1.25}\smash{\begin{tabular}[t]{l}$c_1$\end{tabular}}}}%
    \put(0.19693074,0.05503357){\color[rgb]{0,0,0}\makebox(0,0)[lt]{\lineheight{1.25}\smash{\begin{tabular}[t]{l}$a_1$\end{tabular}}}}%
    \put(0.31487697,0.04112853){\color[rgb]{0,0,0}\makebox(0,0)[lt]{\lineheight{1.25}\smash{\begin{tabular}[t]{l}$c_2$\end{tabular}}}}%
    \put(0.43913473,0.03698202){\color[rgb]{0,0,0}\makebox(0,0)[lt]{\lineheight{1.25}\smash{\begin{tabular}[t]{l}$a_2$\end{tabular}}}}%
    \put(0.76767876,0.041944){\color[rgb]{0,0,0}\makebox(0,0)[lt]{\lineheight{1.25}\smash{\begin{tabular}[t]{l}$c_g$\end{tabular}}}}%
    \put(0.89612715,0.04397207){\color[rgb]{0,0,0}\makebox(0,0)[lt]{\lineheight{1.25}\smash{\begin{tabular}[t]{l}$a_g$\end{tabular}}}}%
    \put(0.55997199,0.07309243){\color[rgb]{0,0,0}\makebox(0,0)[lt]{\lineheight{1.25}\smash{\begin{tabular}[t]{l}$\cdots$\end{tabular}}}}%
  \end{picture}%
\endgroup%

		\caption{ }
		\label{fig12}
	\end{figure}
	
	\begin{figure}[htbp]
		\centering
		\def\svgwidth{10cm}
		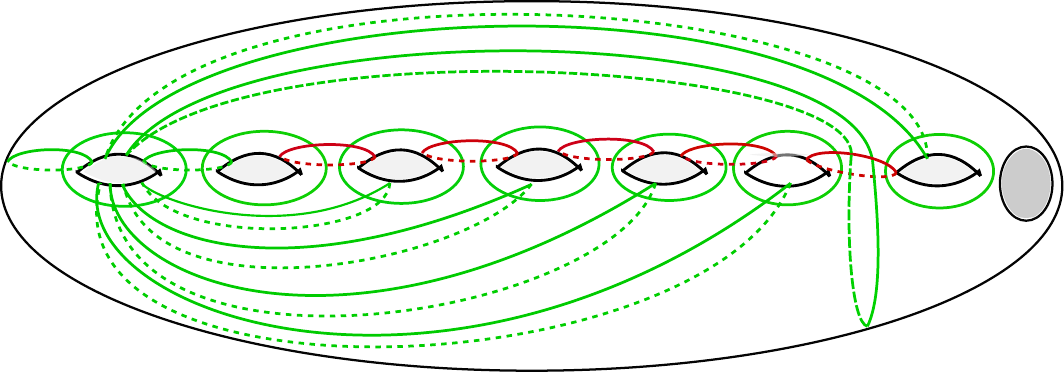
		\caption{The green curves denote the elements of $\SC_g$. }
		\label{oddham}
	\end{figure}

\begin{proof}
	Let $a_i $ and $c_i (1 \leq i \leq g )$ be the curves on $\Sigma_{g,1}$ as illustrated in the figure \ref{fig4} and figure \ref{fig5}. Let $ A_i$ and $C_i$ be the annular neighborhoods of $a_1$ and $c_i$, respectively. Note that $\Sigma_{g,1}$ is diffeomorphic to the plumbing $(A_1 \# C_1 \# A_2 \# C_2 \# \dots \# A_g \# C_g)$, as shown in figure \ref{fig12}.

\begin{enumerate}
	\item[(a)] Consider an embedding $f$ of these plumbed annuli as described in figure \ref{oddham}. Here, the twisted part in each $A_i$ is positioned such that $b_{g-1}$ does not pass through those twisted parts in the $A_i$s (see figure \ref{figb2}). We label the surface framings of the chain curves by $1$, $-1$ and $0$ as shown in figure \ref{oddham}.
	
	\begin{figure}[!htb]
		\centering
		\def\svgwidth{10cm}
		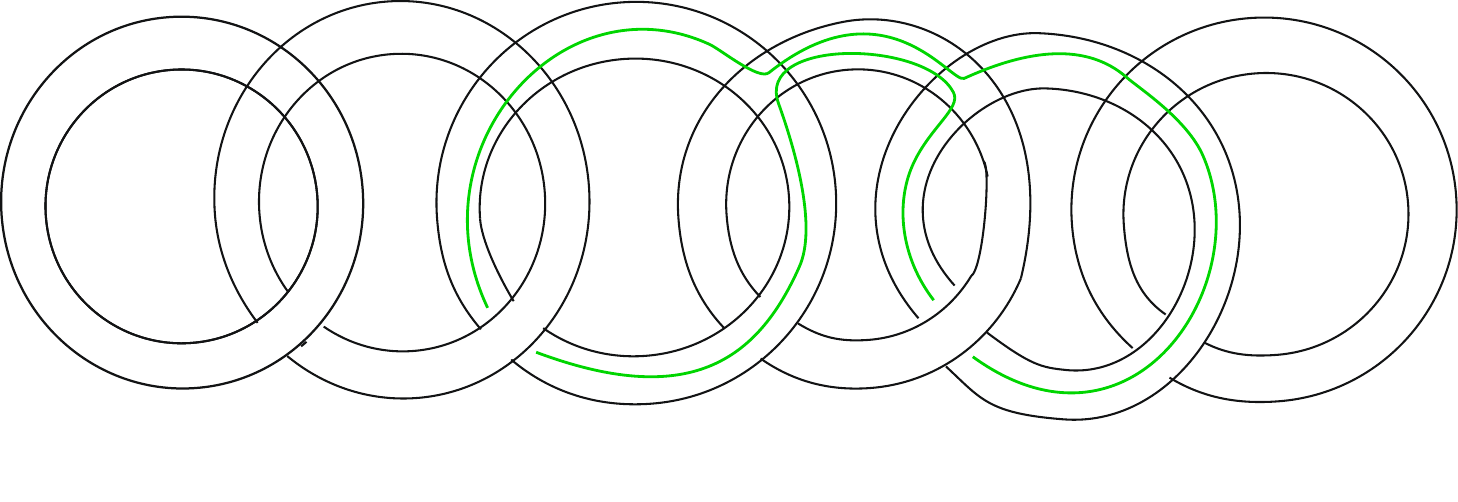
		\caption{Isotopic $b_2$ into a tubular neiborhood of the plumbing.}
		\label{figb2}
	\end{figure}
\begin{figure}[!htb]
	\centering
	\def\svgwidth{10cm}
\begingroup%
  \makeatletter%
  \providecommand\color[2][]{%
    \errmessage{(Inkscape) Color is used for the text in Inkscape, but the package 'color.sty' is not loaded}%
    \renewcommand\color[2][]{}%
  }%
  \providecommand\transparent[1]{%
    \errmessage{(Inkscape) Transparency is used (non-zero) for the text in Inkscape, but the package 'transparent.sty' is not loaded}%
    \renewcommand\transparent[1]{}%
  }%
  \providecommand\rotatebox[2]{#2}%
  \newcommand*\fsize{\dimexpr\f@size pt\relax}%
  \newcommand*\lineheight[1]{\fontsize{\fsize}{#1\fsize}\selectfont}%
  \ifx\svgwidth\undefined%
    \setlength{\unitlength}{373.63815712bp}%
    \ifx\svgscale\undefined%
      \relax%
    \else%
      \setlength{\unitlength}{\unitlength * \real{\svgscale}}%
    \fi%
  \else%
    \setlength{\unitlength}{\svgwidth}%
  \fi%
  \global\let\svgwidth\undefined%
  \global\let\svgscale\undefined%
  \makeatother%
  \begin{picture}(1,0.35831784)%
    \lineheight{1}%
    \setlength\tabcolsep{0pt}%
    \put(0,0){\includegraphics[width=\unitlength,page=1]{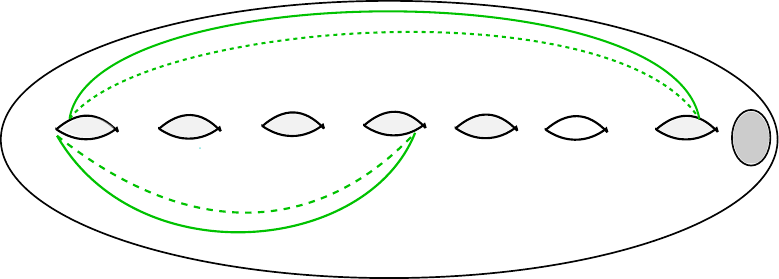}}%
    \put(0.39344617,0.31505446){\color[rgb]{0,0,0}\makebox(0,0)[lt]{\lineheight{1.25}\smash{\begin{tabular}[t]{l}$b_{g-1}$\end{tabular}}}}%
    \put(0.30176293,0.04050602){\color[rgb]{0,0,0}\makebox(0,0)[lt]{\lineheight{1.25}\smash{\begin{tabular}[t]{l}$b_2$\end{tabular}}}}%
    \put(0.28866654,0.1160143){\color[rgb]{0,0,0}\makebox(0,0)[lt]{\lineheight{1.25}\smash{\begin{tabular}[t]{l}$b'_2$\end{tabular}}}}%
    \put(0.46011685,0.23603733){\color[rgb]{0,0,0}\makebox(0,0)[lt]{\lineheight{1.25}\smash{\begin{tabular}[t]{l}$b'_{g-1}$\end{tabular}}}}%
    \put(0,0){\includegraphics[width=\unitlength,page=2]{Isotopy_of_bi.pdf}}%
  \end{picture}%
\endgroup%

	\caption{}
	\label{fig7}
\end{figure}
	
	It is clear that the Dehn twists along the curves $a_i$s and $c_i$s are $f$-extendible. Consider the curve $b_{g-1}'$ (see figure \ref{fig7}) on $\Sigma_{g,1}$ such that $b_{g-1}'$ passes through $C_2$ once, $A_2$ twice, $C_3$ once and so on. It is not hard to see that $b_{g-1}'$ has surface framing $-1$. Thus, $\tau_{b_{g-1}'}$ is $f$-extendible. Since $b_{g-1}'$ is , by lemma \ref{Leic}, $\tau_{b_{g-1}}$ is $f$-extendible. 
	
	\noindent Similarly, we can consider the curve $b_j'$ ($3 \leq j \leq g-1$) isotopic to $b_j$ on $\Sigma_{g,1}$. The curve $b_j'$ passes through $C_2$ once, $A_2$ twice, $C_3$ once, $A_3$ twice, and so on, finally passing through $A_{j-1}$ twice and $C_j$ once. Again, by adding up the framings, we see that the surface framing of $b_j'$ is $1$ for $j$ odd, and $-1$ for $j$ even. Thus, $\tau_{b_j}$ is $f$-extendible for $3 \leq j \leq g-1$. The curve $b_1$ is isotopic to the curve $b_1'$, shown in the figure \ref{fig8} and a similar analysis shows that $b_1'$ has surface framing $1$ for $g$ odd, and $-1$ for for $g$ even. Thus, $\tau_{b_1}$ is $f$-extendible. 
		
	\begin{figure}[!htb]
		\centering
		\vspace{-5cm}
		\def\svgwidth{11cm}
\begingroup%
  \makeatletter%
  \providecommand\color[2][]{%
    \errmessage{(Inkscape) Color is used for the text in Inkscape, but the package 'color.sty' is not loaded}%
    \renewcommand\color[2][]{}%
  }%
  \providecommand\transparent[1]{%
    \errmessage{(Inkscape) Transparency is used (non-zero) for the text in Inkscape, but the package 'transparent.sty' is not loaded}%
    \renewcommand\transparent[1]{}%
  }%
  \providecommand\rotatebox[2]{#2}%
  \newcommand*\fsize{\dimexpr\f@size pt\relax}%
  \newcommand*\lineheight[1]{\fontsize{\fsize}{#1\fsize}\selectfont}%
  \ifx\svgwidth\undefined%
    \setlength{\unitlength}{420.70706898bp}%
    \ifx\svgscale\undefined%
      \relax%
    \else%
      \setlength{\unitlength}{\unitlength * \real{\svgscale}}%
    \fi%
  \else%
    \setlength{\unitlength}{\svgwidth}%
  \fi%
  \global\let\svgwidth\undefined%
  \global\let\svgscale\undefined%
  \makeatother%
  \begin{picture}(1,1.26357948)%
    \lineheight{1}%
    \setlength\tabcolsep{0pt}%
    \put(0,0){\includegraphics[width=\unitlength,page=1]{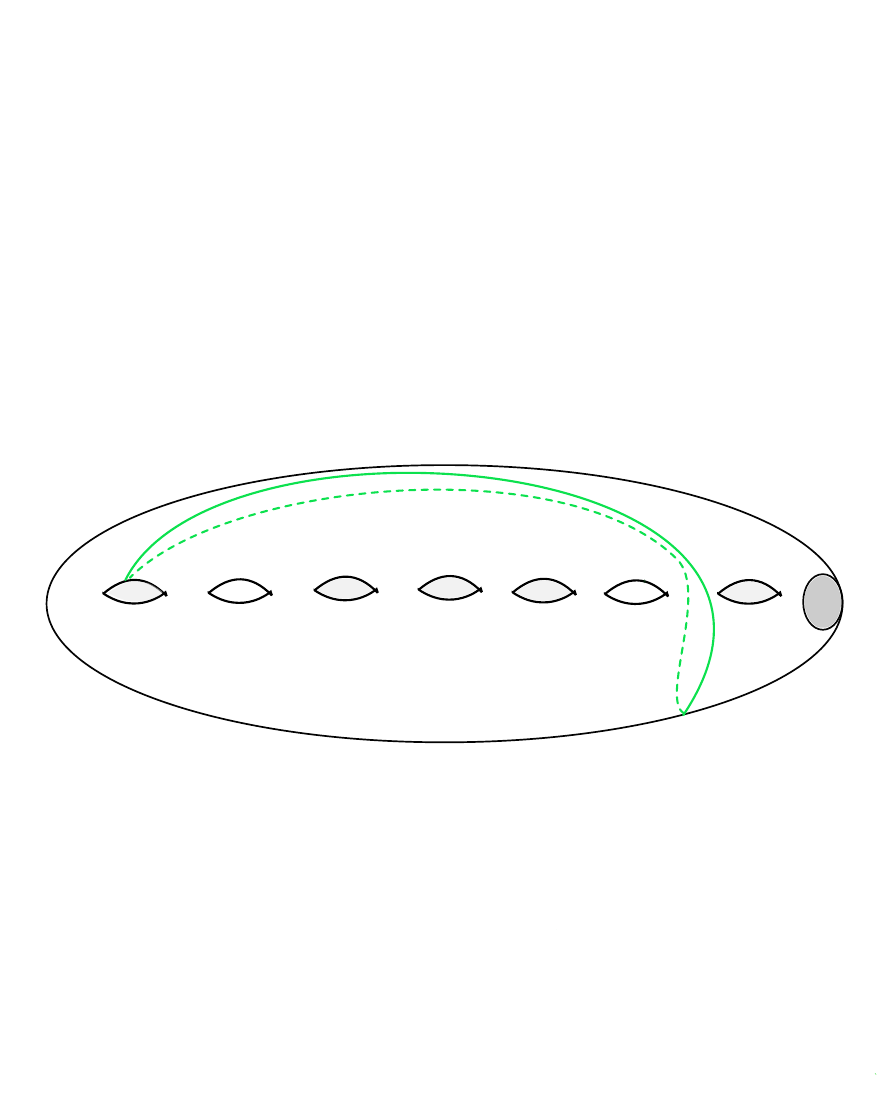}}%
    \put(0.44255412,0.51985539){\color[rgb]{0,0,0}\makebox(0,0)[lt]{\lineheight{1.25}\smash{\begin{tabular}[t]{l}$b_1'$\end{tabular}}}}%
    \put(0.46512605,0.69117543){\color[rgb]{0,0,0}\makebox(0,0)[lt]{\lineheight{1.25}\smash{\begin{tabular}[t]{l}$b_1$\end{tabular}}}}%
    \put(0,0){\includegraphics[width=\unitlength,page=2]{Isotopy_of_b2.pdf}}%
  \end{picture}%
\endgroup%

		\vspace{-4.4cm}
		\caption{}
		\label{fig8}
	\end{figure}
	
	\begin{figure}[!htb]
		\centering
		\def\svgwidth{11cm}
		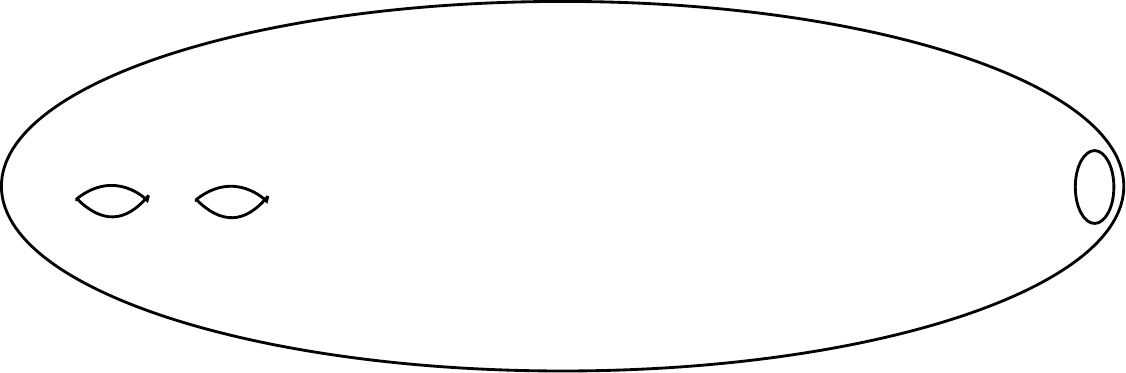
		\caption{}
		\label{fig9}
	\end{figure}
	
	\vspace{0.1cm}
	
	\item[(b)] Consider the embedding $h$ of  $\Sigma_{g,1}$ as shown in figure \ref{fig9}.

	As before, $h$ is described by embedding the chain of plumbed annuli such that the twists in each $A_i$ is positioned such that $d_{g-4}$ does not pass through the twisted regions in the $A_i$s. It is clear that the Dehn twists along the curves $c_1, c_2, c_3, c_4$ and $a_i$ ($1\leq i\leq g$) are $h$-extendible. The curve $d_1$ can be isotopic to the curve $d_1'$ (See figure \ref{fig9}. We see that $d_1'$ passes through $C_3$ once, $A_2$ twice, $C_2$ once, $A_1$ twice, and $C_1$ once. Therefore, $d_1'$ has surface framing $1$. Hence, $\tau_{d_1}$ is $h$-extendible.
	
	\noindent For $2 \leq j \leq g-4$, consider a curve $d_j'$ isotopic to $d_j$ on $\Sigma_{g,1}$ such that $d_j$ passes through $C_4$ once, $A_4$ twice, and continuing like this finally passes through $A_{g-j}$ twice and $C_{g-(j-1)}$ once. By adding up the framings, we see that $d_j'$ has surface framing $-1$. Therefore, $\tau_{d_j}$ ($2 \leq j \leq g-4$) is $h$-extendible.
	
	\noindent Similarly, we isotope $d_{g-3}$ to a curve $d_{g-3}'$ such that $d_{g-3}$ passes through $C_4$ once, $A_4$ twice, $C_5$ once, $A_5$ twice, and so on. Thus, $d_{g-3}'$ has surface framing : $-1$ for $g$ even, $1$ for $g$ odd. Hence, $\tau_{d_{g-3}}$ is $h$-extendible too. 
	 
	\begin{figure}[!htb]\label{}
		\centering
		\def\svgwidth{10cm}
\begingroup%
  \makeatletter%
  \providecommand\color[2][]{%
    \errmessage{(Inkscape) Color is used for the text in Inkscape, but the package 'color.sty' is not loaded}%
    \renewcommand\color[2][]{}%
  }%
  \providecommand\transparent[1]{%
    \errmessage{(Inkscape) Transparency is used (non-zero) for the text in Inkscape, but the package 'transparent.sty' is not loaded}%
    \renewcommand\transparent[1]{}%
  }%
  \providecommand\rotatebox[2]{#2}%
  \newcommand*\fsize{\dimexpr\f@size pt\relax}%
  \newcommand*\lineheight[1]{\fontsize{\fsize}{#1\fsize}\selectfont}%
  \ifx\svgwidth\undefined%
    \setlength{\unitlength}{405.13610248bp}%
    \ifx\svgscale\undefined%
      \relax%
    \else%
      \setlength{\unitlength}{\unitlength * \real{\svgscale}}%
    \fi%
  \else%
    \setlength{\unitlength}{\svgwidth}%
  \fi%
  \global\let\svgwidth\undefined%
  \global\let\svgscale\undefined%
  \makeatother%
  \begin{picture}(1,0.33097113)%
    \lineheight{1}%
    \setlength\tabcolsep{0pt}%
    \put(0,0){\includegraphics[width=\unitlength,page=1]{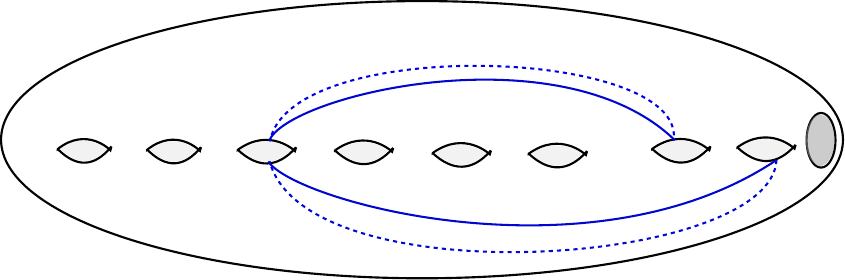}}%
    \put(0.70108725,0.14256989){\color[rgb]{0,0,0}\makebox(0,0)[lt]{\lineheight{1.25}\smash{\begin{tabular}[t]{l}$\cdots$\end{tabular}}}}%
    \put(0,0){\includegraphics[width=\unitlength,page=2]{Isotopy_of_di.pdf}}%
    \put(0.54983943,0.26167989){\color[rgb]{0,0,0}\makebox(0,0)[lt]{\lineheight{1.25}\smash{\begin{tabular}[t]{l}$d_2$\end{tabular}}}}%
    \put(0.5488145,0.19404998){\color[rgb]{0,0,0}\makebox(0,0)[lt]{\lineheight{1.25}\smash{\begin{tabular}[t]{l}$d_2'$\end{tabular}}}}%
    \put(0.59878371,0.0926764){\color[rgb]{0,0,0}\makebox(0,0)[lt]{\lineheight{1.25}\smash{\begin{tabular}[t]{l}$d_{g-3}'$\end{tabular}}}}%
    \put(0.52067569,0.02050887){\color[rgb]{0,0,0}\makebox(0,0)[lt]{\lineheight{1.25}\smash{\begin{tabular}[t]{l}$d_{g-3}$\end{tabular}}}}%
  \end{picture}%
\endgroup%

		\caption{}
		\label{fig10}
	\end{figure}

\end{enumerate}

\end{proof}		

\begin{exmp}
	Let $f:\Sigma_{g,1} \rightarrow \mathbb{D}^4$ be a Seifert type embedding and let $a_i$ ($1\leq i \leq g$), $b_j$ ($1\leq j \leq g-1$) and $c_k$ ($1\leq k \leq g$) be simple closed curves on $\Sigma_{g,q}$ (seefigure \ref{fig11}) such that $a_i$, $b_j$ and $c_k$ have surface framings $(-)^{i-1}$, $(-1)^j$ and $(-1)^{k-1}$ respectively. Then, we claim that the Dehn twists $\tau_{a_i}$ ($1 \leq i \leq g$), $\tau_{c_i}$ ($1\leq i \leq g-1$), $\tau_{b_{2l-1}}$ ($1 \leq 2l-1 \leq g$) are $f$-extendible.	
	
	\noindent By the Hopf annulus trick, $\tau_{a_i}$ ($1 \leq i \leq g$), $\tau_{c_k}$ ($1 \leq k \leq g-1$) and $\tau_{b_1}$ are $f$-extendible. The curve $b_{2k+1}$, when isotoped into a tubular neighborhood of $b_1 \cup a_1 \cup  a_2 \cup \dots \cup a_g \cup c_1 \cup c_2 \cup \dots \cup c_{g-1}$, would pass through the Hopf annulus neighbourhoods of the curve $c_{2k}, a_{2k}, a_{2k-1}, c_{2k-1},\cdots, c_1, a_1,$ and $b_1$. Since the twists in the Hopf annulus neighbourhoods of $a_i$ ($1 \leq i \leq g$) are chosen not to be on the side of $b_{2k+1}$ and the curves $c_i$s have alternating framing signs, $b_{2k+1}$s will have surface framing $-1$. Hence, $\tau_{b_{2k+1}}$s are $f$-extendible too.
	
\end{exmp}	

 \begin{figure}[!htb]\label{}
			\centering
			\def\svgwidth{12cm}
			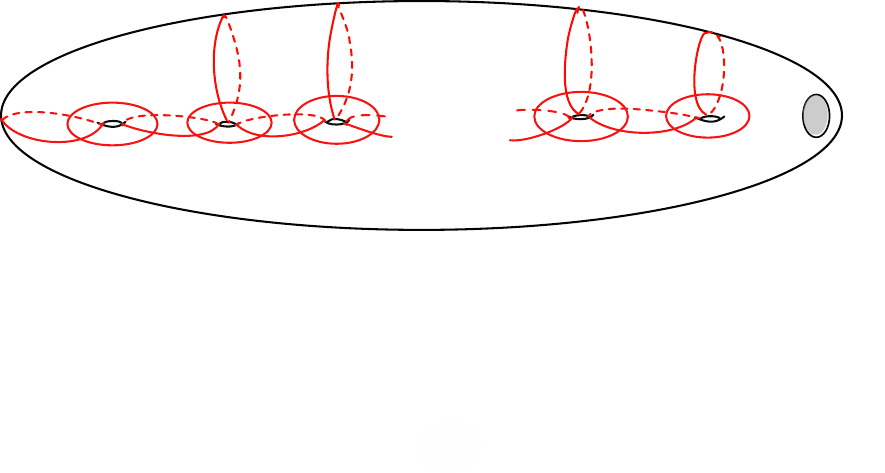
			\vspace{-3.5cm}
			\caption{ }
			\label{fig11}
\end{figure}

		\section{Flexible embeddings and sliceness of links}\label{sec5}
	
	Consider a system of simple closed curves on $\Sigma_{g,b}$, denoted by $\mathcal{H}$, as shown in figure \ref{humphreygen}. It is well known that Dehn twists along the curves in $\mathcal{H}$ generate $\mathcal{MCG}(\Sigma_{g,0})$. $\mathcal{H}$ is called the Humphreys generating set. Recall, that an $m$-component link $L$ in the boundary of a $4$-manifold $V$ is called \emph{slice}, if there exist $m$ properly embedded disjoint $2$-disks in $V$ with boundary $L$. As an application of Theorem \ref{thm1} we obtain the following result.

	\begin{theorem} \label{thm4}
		Let $W$ be a $4$-manifold with nonempty boundary such that $W$ is an integral homology $4$-ball and $\partial W$ is an integral homology $3$-sphere. Let $\mathcal{L}$ be an $m$-component link in $\partial W$ that bounds a proper embedding $f$ of $ \Sigma_{g,m}$ in $W$ such that for any curve $\alpha$ in $\mathcal{H}$, $\tau_{\alpha}$ is $f$-extendible. Then, $\mathcal{L}$ is not slice in any $4$-manifold $V$ bounding $\partial W$ such that $V$is an integral homology $4$-ball. 
	\end{theorem}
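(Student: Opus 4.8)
The plan is to argue by contradiction. Suppose that $\mathcal{L}$ is slice in some integral homology $4$-ball $V$ with $\partial V = \partial W$, and let $D_1, \dots, D_m \subset V$ be the corresponding disjoint properly embedded slice disks. I would form the closed oriented $4$-manifold $X = W \cup_{\partial W} (-V)$ by gluing $W$ to $V$ along their common boundary (reversing orientation on $V$) and smoothing the corner. The first step is a Mayer--Vietoris computation showing that $X$ is an integral homology $4$-sphere: since $H_*(W) \cong H_*(V) \cong H_*(\mathrm{pt})$ and $\partial W$ is an integral homology $3$-sphere, the sequence forces $H_i(X; \mathbb{Z}) \cong H_i(S^4; \mathbb{Z})$ for all $i$.

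Next I would build a closed embedded surface in $X$. The surface $f(\Sigma_{g,m})$ lies in $W$ and meets $\partial W = \partial V$ exactly in $\mathcal{L}$, while the slice disks $D_j$ lie in $V$ and meet $\partial V$ exactly in the components of $\mathcal{L}$. Capping off the $m$ boundary circles of $f(\Sigma_{g,m})$ with the $m$ disjoint disks $D_j$ therefore produces a smoothly embedded closed genus $g$ surface $\hat\Sigma_g$ in $X$; denote the resulting embedding by $\hat f : \Sigma_{g,0} \to X$. Here I assume $g \geq 1$, which is the relevant case since the Humphreys system $\mathcal{H}$ generates $\mathcal{MCG}(\Sigma_{g,0})$ precisely in that range.

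The heart of the argument is to promote $f$-extendibility in $W$ to $\hat f$-extendibility in $X$. For each curve $\alpha$ in $\mathcal{H}$ (which lies in the interior of $\Sigma_{g,m}$, away from $\mathcal{L}$), the hypothesis gives a diffeomorphism $H_\alpha$ of $W$, equal to the identity near $\partial W$, with $H_\alpha \circ f = f \circ \tau_\alpha$ up to isotopy. Since $H_\alpha$ is the identity near $\partial W$, I extend it by the identity over $V$ to a diffeomorphism $\hat H_\alpha$ of $X$; capping off carries the interior Humphreys curves on $\Sigma_{g,m}$ to the standard Humphreys curves on $\Sigma_{g,0}$, so $\hat H_\alpha$ realizes $\tau_\alpha$ as an $\hat f$-extendible class on $\hat\Sigma_g$. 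Because the $\hat f$-extendible classes form a subgroup of $\mathcal{MCG}(\Sigma_{g,0})$ and the Humphreys Dehn twists generate all of $\mathcal{MCG}(\Sigma_{g,0})$, every mapping class of $\hat\Sigma_g$ is $\hat f$-extendible; that is, $\hat f$ is a flexible embedding of a closed genus $g$ surface in the integral homology $4$-sphere $X$.

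Finally, since $g \geq 1$ and the closed surface has $b = 0 \neq 1$, this flexible embedding directly contradicts statement $(1)$ of Theorem \ref{thm1}, completing the proof. The step I expect to be the main obstacle is the passage from $f$-extendibility to $\hat f$-extendibility: one must check carefully that extending $H_\alpha$ by the identity over $V$ yields a genuine diffeomorphism of the smoothed $X$ inducing exactly the closed-surface twist $\tau_\alpha$ on $\hat\Sigma_g$, and that the capping disks do not interfere with the supports of the $H_\alpha$. This can be arranged because each $H_\alpha$ is supported away from $\partial W$ while the disks lie entirely in $V$, and because extendibility is invariant under isotopy of the embedding by Lemma \ref{4.4}.
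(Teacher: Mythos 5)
Your proposal is correct and follows essentially the same route as the paper's proof: cap off $\Sigma_{g,m}$ with the slice disks to get a closed surface in the homology $4$-sphere $X = W \cup_{\partial W} V$, transfer $f$-extendibility of the Humphreys twists to the capped-off embedding, conclude flexibility, and contradict Theorem \ref{thm1}. Your write-up simply makes explicit two points the paper leaves implicit (the Mayer--Vietoris computation for $X$ and the extension-by-identity argument for promoting extendibility), which is fine.
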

	
	\begin{proof}
		
		Let us assume that $\mathcal{L}$ in $\partial W$ is smoothly slice in some $4$-manifold $V$ bounding $\partial W$ such that $V$ is an integral homology $4$-ball. Let $\mathcal{D} = D_1 \sqcup D_2 \sqcup \cdots \sqcup D_m$ be the union of $m$ disjoint disks in $V$ bounding $\mathcal{L}$. Capping off $\Sigma_{g,m}$ by $\mathcal{D}$ gives an embedding $F$ of $\Sigma_{g,0}$ in $X = \frac{V \sqcup W}{\partial V \sim_{id} \partial W}$. Since $V$ and $W$ are integral homology $4$-balls with an integral homology $3$-sphere boundary, $X$ is an integral homology $4$-sphere. Thus, $F$ is characteristic. 
		
		\noindent Note that any element $\phi \in \mathcal{MCG}(\Sigma_{g,0})$ that is $f$-extendible, is also $F$-extendible. Thus, $\tau_\gamma$ is $F$-extendible for all $\gamma \in \mathcal{H}$. Since Dehn twists along the curves in $\mathcal{H}$ generate $\mathcal{MCG}(\Sigma_{g,0})$, $F$ is a flexible embedding. By Theorem \ref{thm1}, this gives a contradiction.
		
	\end{proof}
	
	\begin{exmp}
		
			Consider the case when $\partial W = \s^3$. One can construct examples of seifert surfaces of links satisfying the conditions in Theroem \ref{thm4}. Figure \ref{nl} gives two such examples of a $2$-componentlink. The picture on the left of figure \ref{nl} is an embedding of $\Sigma_{3,2}$, and the picture on the right is an embedding of $\Sigma_{2,2}$. Note that every core curve of an annulus (the red curves) in these two embeddings has surface framing $\pm1$. Therefore, Dehn twists along them are extendible. Also, for both the embeddings, the set of core curves is the Humphreys generating set. Thus, by Theorem \ref{thm4}, none of these $2$-links is slice in any integral homology $4$-ball bounding $\s^3$.   
		
		\begin{figure}[htbp] 
			
			\centering
			\def\svgwidth{14cm}
\begingroup%
  \makeatletter%
  \providecommand\color[2][]{%
    \errmessage{(Inkscape) Color is used for the text in Inkscape, but the package 'color.sty' is not loaded}%
    \renewcommand\color[2][]{}%
  }%
  \providecommand\transparent[1]{%
    \errmessage{(Inkscape) Transparency is used (non-zero) for the text in Inkscape, but the package 'transparent.sty' is not loaded}%
    \renewcommand\transparent[1]{}%
  }%
  \providecommand\rotatebox[2]{#2}%
  \newcommand*\fsize{\dimexpr\f@size pt\relax}%
  \newcommand*\lineheight[1]{\fontsize{\fsize}{#1\fsize}\selectfont}%
  \ifx\svgwidth\undefined%
    \setlength{\unitlength}{510.97460827bp}%
    \ifx\svgscale\undefined%
      \relax%
    \else%
      \setlength{\unitlength}{\unitlength * \real{\svgscale}}%
    \fi%
  \else%
    \setlength{\unitlength}{\svgwidth}%
  \fi%
  \global\let\svgwidth\undefined%
  \global\let\svgscale\undefined%
  \makeatother%
  \begin{picture}(1,0.36009056)%
    \lineheight{1}%
    \setlength\tabcolsep{0pt}%
    \put(0,0){\includegraphics[width=\unitlength,page=1]{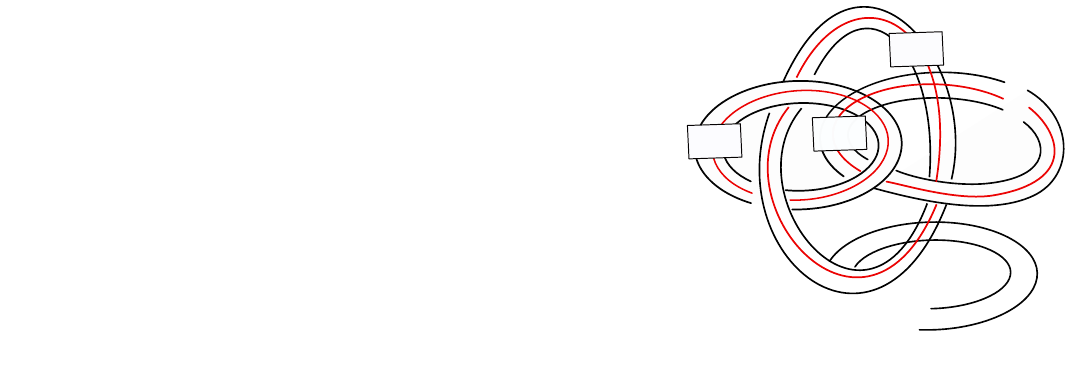}}%
    \put(0.64916802,0.21615959){\color[rgb]{0,0,0}\rotatebox{1.89175556}{\makebox(0,0)[lt]{\lineheight{1.25}\smash{\begin{tabular}[t]{l}$-1$\end{tabular}}}}}%
    \put(0.77860542,0.22530026){\color[rgb]{0,0,0}\rotatebox{1.89175627}{\makebox(0,0)[lt]{\lineheight{1.25}\smash{\begin{tabular}[t]{l}$1$\end{tabular}}}}}%
    \put(0.84076087,0.30554188){\color[rgb]{0,0,0}\rotatebox{1.89175556}{\makebox(0,0)[lt]{\lineheight{1.25}\smash{\begin{tabular}[t]{l}$-1$\end{tabular}}}}}%
    \put(0,0){\includegraphics[width=\unitlength,page=2]{non_slice_link.pdf}}%
    \put(0.87532403,0.0085937){\color[rgb]{0,0,0}\makebox(0,0)[lt]{\lineheight{1.25}\smash{\begin{tabular}[t]{l}$1$\end{tabular}}}}%
    \put(0.94628049,0.09338161){\color[rgb]{0,0,0}\makebox(0,0)[lt]{\lineheight{1.25}\smash{\begin{tabular}[t]{l}$-1$\end{tabular}}}}%
  \end{picture}%
\endgroup%

			\caption{}
			\label{nl}
			
		\end{figure}
		
	\end{exmp}

	\noindent There exist infinitely many (of different homotopy type) examples of a $4$-manifold that is an integral homology $4$-ball and has boundary $\s^3$. In particular, Ratcliffe and Tschantz \cite{RT} have constructed an infinite family of aspherical closed $4$-manifolds which are also integral homology $4$-spheres. Removing an open $4$-disk from each of those manifolds gives us an infinite family of integral homology $4$-balls with boundary $\s^3$.

	\section{Extendible mapping classes induced via Fibered Dehn twists on $\s^4$}\label{sec6}

	\subsection{A circle action on $\s^3$ via open book and $\textrm{T}_{\s^3}$} Consider the trivial open book of $\s^3$ given by $\textrm{OB}(\Sigma = \D^2, id)$. The open book induces an $\s^1$-action on $\s^3$ via the flow of a vector field $X$ on $\s^3$ whose time-$1$ map takes a page to itself and applies the identity map as monodromy on it. In a neighborhood of the binding, $(\partial \Sigma \times \D^2, (x,r,\theta))$, $X$ is given by $r \cdot \frac{\partial}{\partial \theta}$. In particular, the flow of $X$ fixes the binding unknot $U_0 = \partial \Sigma$ pointwise.  Let $p$ be a point in the interior of the collar neihghborhood $U_0 \times [1-\epsilon,1]$. Then the orbit of $p$ under this $\s^1$-action gives another unknot $U_1$ in $\s^3$. See the picture on the left of figure \ref{fig0}. By the definition of open book, $U_1$ naturally bounds a $2$-disk in $S^3$ that intersects the binding at a single point. Thus, we have an $\s^1$-action on $\s^3$ that fixes the binding $U_0$ and rotates $U_1$. Let $\textrm{T}_{\s^3}$ denote the corresponding fibered Dehn twist on $\s^3 \times [0,1]$.

	\begin{figure}[htbp] 
		
		\centering
		\def\svgwidth{11cm}
		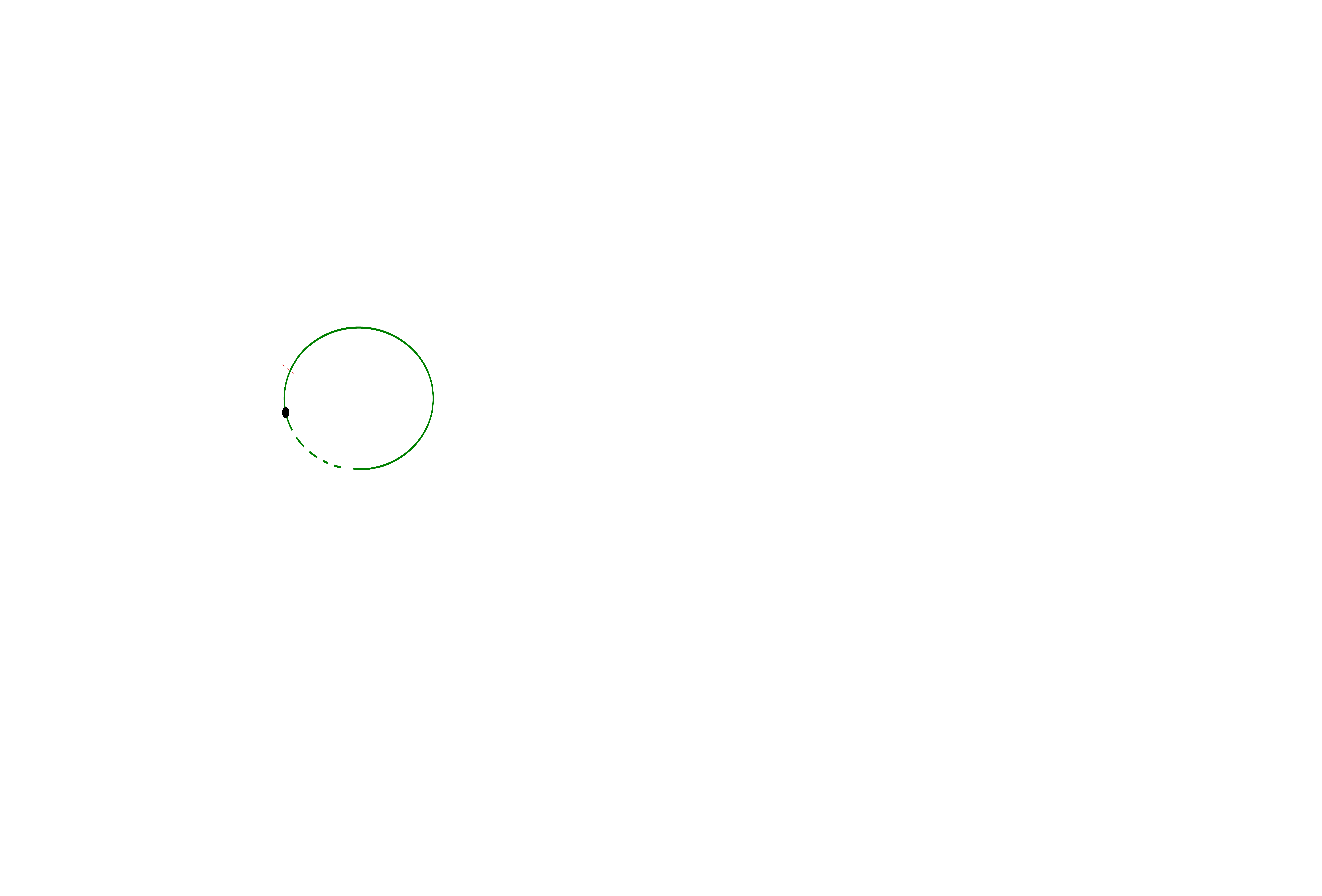
		\caption{}
		\label{fig0}
		
	\end{figure}

	\begin{exmp}

    To see how $\textrm{T}_{\s^3}$ induces a diffeomorphism on an embedded surface, consider an embedding of $\Sigma_{g-1}$ in $\s^4$ as in the picture on the right of figure \ref{fig0}. Here, we embed $\Sigma_{g-1}$ in a tubular neighborhood of an $\s^3 \subset \s^4$, diffeomorphic to $\s^3 \times [-1,2]$. We think of $\sigma_{g-1}$ as a union of three partss : two planar surfaces with $g$ boundary components $U_1, U_2, \dots, U_g$ and $g$ copies of cylinders $U_1 \times [0,1] \sqcup \dots \sqcup U_g \times [-1,2]$. We embed one of the planar parts in $\s^3 \times \{2\}$ (the upper shaded region on the right of figure \ref{fig0}) and the other one in $\s^3 \times \{-1\}$ (the lower shaded region on the left of figure \ref{fig0}). The middle part is embedded such that $U_i \times \{t\}$ embeds in $\s^3 \times \{t\}$ ($t \in [-1,2]$), for $1 \leq i \leq g$. We can arrange these embeddings so that for the trivial open book $\textrm{OB}(\D^2,id)$ of $\s^3$, the loops $U_i \times \{t\}$ become orbits of $g$ disjoint points in the interior of a page $D$ under the flow of the open book. Applying $\textrm{T}_{\s^3}$ with support $\s^3 \times [0,1]$ then induces the diffeomorphism $\tau_{U_1} \circ \tau_{U_2}\circ \dots \circ \tau_{U_g}$ on the embedded $\Sigma_{g-1}$.   		
		
			
			
		
    \end{exmp}
	
	\subsection{Generating the subgroup of extendible mapping classes}
	
	 Consider the subgroup of all extendible mapping classes for the trivial embedding $e_0 : \Sigma_g \rightarrow \s^4$. Let $SP_g[q_{st}]$ denote the subgroup of all $e_0$-extendible mapping classes in $\mathcal{MCG}(\Sigma_g)$. Hirose \cite{hirose1} proved that $SP_g[q_{st}]$ is isomorphic to the spin mapping class group $SP_g$ and gave a set of generators for $SP_g[q_{st}]$ as follows.
	
	$$X_i = \tau_{c_{i+1}} \circ \tau_{c_i} \circ \tau^{-1}_{c_{i+1}} \   \ (1 \leq i \leq 2g),$$ 
	$$ Y_{2j} = \tau_{c_{2j}} \circ \tau_{b_{2j}} \circ \tau^{-1}_{c_{2j}} \   \ (2 \leq j \leq g-1),$$
	$$ D_i = \tau^2_{c_i} \   \ (1 \leq i \leq 2g+1),$$
	$$ DB_{2j} = \tau^2_{b_{2j}} \   \ (2 \leq j \leq g-1),$$
	$$ Z_1 = \tau_{c_1} \circ \tau_{c_3} \circ \tau_{B_4}, Z_2 = \tau_{b_4} \circ \tau_{c_5} \circ \tau_{c_7}\circ \cdots \circ \tau_{c_{2g+1}}$$.
	
	\noindent Here, $c_i$s and $b_j$s are simple closed curves as shown in figure \ref{hg}.
	
	\begin{figure}[htbp] 
		
		\centering
		\def\svgwidth{12cm}
		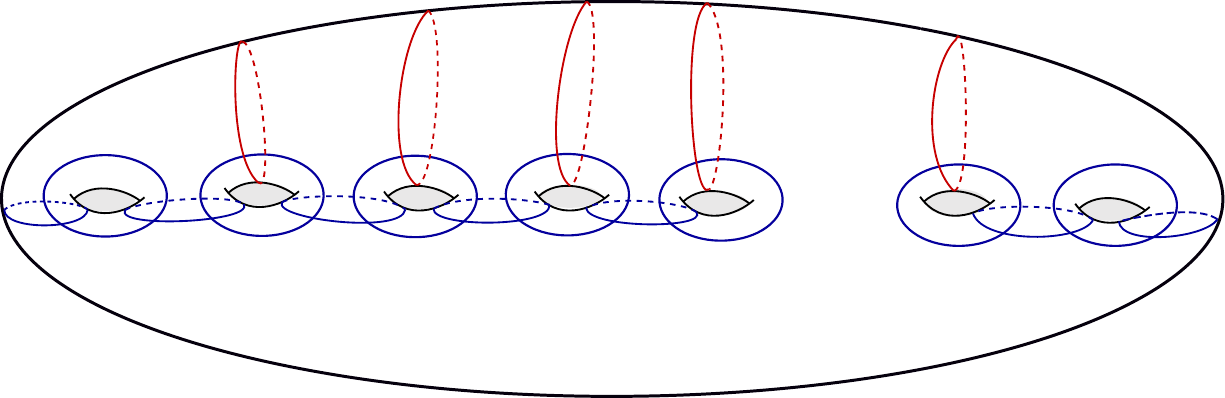
		\caption{}
		\label{hg}
		
	\end{figure}
	
	\vspace{0.1cm}
	
	\noindent We note that $X_i = \tau_{\tau_{c_{i+1}}(c_i)}$, and for the trivial embedding , the curve $\tau_{c_{i+1}}(c_i)$ is an unknot and has surface framing $\pm1$. Therefore, the $X_i$s can be induced by an isotopy of $\s^4$. Similarly, the $Y_{2j}$s can be induced by an isotopy of $\s^4$. The mapping classes $D_i$ and $DB_2j$ can also be induced via an isotopy of $\s^4$, by applying the tube trick.

	\begin{theorem} \label{thm5}
		For $g \geq2$, 
		
		\begin{enumerate}
			\item[(a)] $\textrm{T}_{\s^3}$ induces the mapping classes : $\beta_j=\tau_{b_{2j}} \circ \tau_{c_{2j+1}}\circ \tau_{b_{2j+1}} (2 \leq j \leq g-1), \beta_1= \tau_{c_1}\circ \tau_{c_3}\circ \tau_{b_4}, \beta_g=\tau_{b_{2g-2}}\circ \tau_{c_{2g-1}}\circ \tau_{c_{2g+1}}, \zeta=\tau_{b_4}\circ \tau_{c_5} \circ \tau_{c_7}\circ \cdots \circ \tau_{c_{2g+1}}$.
			
			\vspace{0.25cm}
			
			\item[(b)] $SP_g[q_{st}]$ is generated by the following mapping classes.  
			$$\alpha_i=\tau_{c_{2i+1}} \circ \tau_{c_{2i}} \circ \tau^{-1}_{c_{2i+1}} \ ( 1 \leq i \leq g \ ),$$
			$$\beta_j=\tau_{b_{2j}} \circ \tau_{c_{2j+1}}\circ \tau_{b_{2j+1}} \ ( 2 \leq j \leq g-1 \ ),$$
			$$ \beta_1= \tau_{c_1}\circ \tau_{c_3}\circ \tau_{b_4},~ \ \beta_g=\tau_{b_{2g-2}}\circ \tau_{c_{2g-1}}\circ \tau_{c_{2g+1}},$$
			$$ \gamma_1= \tau^2_{c_1}, \ \gamma_k= \tau^2_{b_{2k}} \ (2 \leq k \leq g-1),$$
			$$ \zeta=\tau_{b_4}\circ \tau_{c_5} \circ \tau_{c_7}\circ \cdots \circ \tau_{c_{2g+1}}.$$
		\end{enumerate}

	\end{theorem}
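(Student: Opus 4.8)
The plan is to handle the two parts by quite different means: part (a) geometrically, through the flow of the open book, and part (b) group-theoretically, by comparison with Hirose's generating set.

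For part (a) the key mechanism is exactly the one illustrated in the Example preceding the statement. If a collection of pairwise disjoint simple closed curves $\gamma_1,\dots,\gamma_k$ on $\Sigma_g$ can be simultaneously realized, after an isotopy of the trivial embedding $e_0$, as the orbits of $k$ distinct interior points of a single page $D$ of the trivial open book $\textrm{OB}(\D^2,id)$ of an $\s^3\subset\s^4$, then applying $\textrm{T}_{\s^3}$ with support $\s^3\times[0,1]$ induces the product $\tau_{\gamma_1}\circ\cdots\circ\tau_{\gamma_k}$ on the embedded surface. Since the $\gamma_i$ are disjoint, the resulting Dehn twists commute, so the product is well defined independently of ordering, and by Lemma \ref{4.4} it is $e_0$-extendible. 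First I would check that the curves appearing in each of the four words $\beta_1,\beta_j,\beta_g,\zeta$ are pairwise disjoint (as one reads off from Figure \ref{hg}); this is forced anyway, since a fibered Dehn twist along distinct orbit circles can only produce commuting twists. Then, for each word, I would exhibit an isotopy of $e_0$ placing precisely those curves as orbit circles on one page, built as in the Example from planar pieces joined by cylinders whose feet are the chosen curves. The induced map is then the stated product.

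For part (b) I would first recall that by Hirose \cite{hirose1} the subgroup $SP_g[q_{st}]$ is generated by $\{X_i, Y_{2j}, D_i, DB_{2j}, Z_1, Z_2\}$, and that (as noted just before the theorem) each of $X_i, Y_{2j}, D_i, DB_{2j}$ is extendible via the Hopf annulus or tube trick. The argument then splits into two inclusions. For $\langle\text{new set}\rangle\subseteq SP_g[q_{st}]$ it suffices that each new generator is extendible: $\alpha_i=\tau_{\tau_{c_{2i+1}}(c_{2i})}$ is a single twist along a $\pm1$-framed unknot (Hopf annulus trick, Proposition \ref{pr1}), $\gamma_1=\tau^2_{c_1}$ and $\gamma_k=\tau^2_{b_{2k}}$ follow from the tube trick (Proposition \ref{tubetrick}), and $\beta_1,\beta_j,\beta_g,\zeta$ are extendible by part (a). For the reverse inclusion I would show every Hirose generator lies in the subgroup generated by the new set. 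The direct matches $\alpha_i=X_{2i}$, $\gamma_1=D_1$, $\gamma_k=DB_{2k}$, $\beta_1=Z_1$, and $\zeta=Z_2$ dispose of a large portion; the remaining generators — the odd-indexed $X_i$, the $Y_{2j}$, and the missing $D_i$ — I would recover by combining $\beta_j$ and $\beta_g$ with the already available elements, extracting individual squared twists $\tau^2_{c_i}$ and conjugated single twists through the braid and commutation relations among the chain twists $\tau_{c_i}$ and the identity $\tau_{\phi(c)}=\phi\,\tau_c\,\phi^{-1}$.

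The hard part will be this reverse inclusion in part (b): the reduction from $6g-1$ to $3g$ generators is possible precisely because Hirose's set is far from minimal, so the real content is to disentangle each individual (or conjugated single) Dehn twist from the packaged products $\beta_j$, $\beta_g$, and $\zeta$ using relations in $\mathcal{MCG}(\Sigma_g)$ — there is no shortcut through a dimension or abelianization count. A secondary but still delicate issue is the geometric bookkeeping in part (a): one must verify that the curves in each product genuinely bound disks meeting the binding once and lie unlinked in the page, so that the simultaneous orbit-circle configuration on a single page really exists; this is where the specific planar-plus-cylinder embeddings, and Lemma \ref{Leic} for replacing each curve by the isotopic orbit circle, do the work.
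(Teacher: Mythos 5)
Your strategy coincides with the paper's in both parts. For (a) the paper does exactly what you describe: it isotopes the trivial embedding so that the curves of each word $\beta_1,\beta_j,\beta_g,\zeta$ become cylinders over orbit circles of disjoint interior points of a single page of $\textrm{OB}(\D^2,id)$ (planar pieces at the levels $\s^3\times\{-1\}$ and $\s^3\times\{2\}$, cylinders in between), so that $\textrm{T}_{\s^3}$ supported on $\s^3\times[0,1]$ induces the product of the corresponding twists. For (b) the paper also argues via the two inclusions against Hirose's set, and your direct matches $\alpha_i=X_{2i}$, $\gamma_1=D_1$, $\gamma_k=DB_{2k}$, $\beta_1=Z_1$, $\zeta=Z_2$ are the right ones (the paper misprints the first as $X_{2i+1}=\alpha_i$; your indexing is the correct reading of Hirose's definition).

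The gap is that the reverse inclusion of (b), which you yourself flag as the real content, is only promised, and it is where essentially all of the paper's proof takes place. Some of the missing elements do fall out in one line from your toolkit: since the curves in each $\beta$ are pairwise disjoint, $\beta_j^2$ splits as a product of squared twists, giving for instance $D_3=\tau^2_{c_3}=\beta_1^2\gamma_1^{-1}\tau^{-2}_{b_4}$, and conjugation gives $Y_{2j}=\beta_j^{-1}\circ\alpha_j\circ\beta_j$. But the remaining generators --- $D_{2i}$ for $2\le i\le g-1$, $D_2$, $D_{2g}$, and the odd-indexed $X_{2i-1}=\tau_{c_{2i}}\circ\tau_{c_{2i-1}}\circ\tau^{-1}_{c_{2i}}$ --- require several-step chains of braid relations; for example the paper first reduces $D_{2i}$ to membership of $\tau_{b_{2i}}\circ\tau_{c_{2i}}\circ\tau^{-1}_{b_{2i}}$ in the subgroup, then rewrites that conjugate by inserting $\tau_{b_{2i+2}}\circ\tau^{-1}_{c_{2i+1}}\circ\tau_{c_{2i+1}}$ and regrouping around $\alpha_i$, $\beta_i$ and the available squared twists, and $D_2$ in turn needs $D_3$ and $\beta_1$ first. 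Nothing in your plan would fail --- the paper confirms each of these words exists --- but as written the proposal stops exactly where the proof starts, and since (as you note) there is no abelianization or counting shortcut, a complete argument must exhibit these explicit factorizations.
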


	\begin{proof} Let $G$ be a group generated by $\alpha_i$ ($1 \leq i \leq g$), $\beta_j$ ($1 \leq j \leq g$), $\gamma_k$ ($\leq k \leq g-1$),  and $\zeta$. 
			
			\begin{enumerate}
				
				\item[(a)]  By the Hopf annulus trick, the maps $\alpha_i$ ($1 \leq i \leq g$) can be induced by isotopies of $\s^4$. Similarly, by the tube trick, the maps $\gamma_1$ and $\gamma_k$ ($2 \leq k \leq g-1$) can be induced by an isotopy of $\s^4$. To show that the remaining geneartors of $G$ can be induced by $\textrm{T}_{\s^3}$, we describe an embedding isotopic to the trivial embedding, for each of those generators. We take a tubular neighborhood of the standard $\s^3 \subset \s^4$, diffeomorphic to $\s^3 \times [-1, 2]$.

				\begin{figure}[htbp] 
					
					\centering
					\vspace{-4.5cm}
					\def\svgwidth{13cm}
					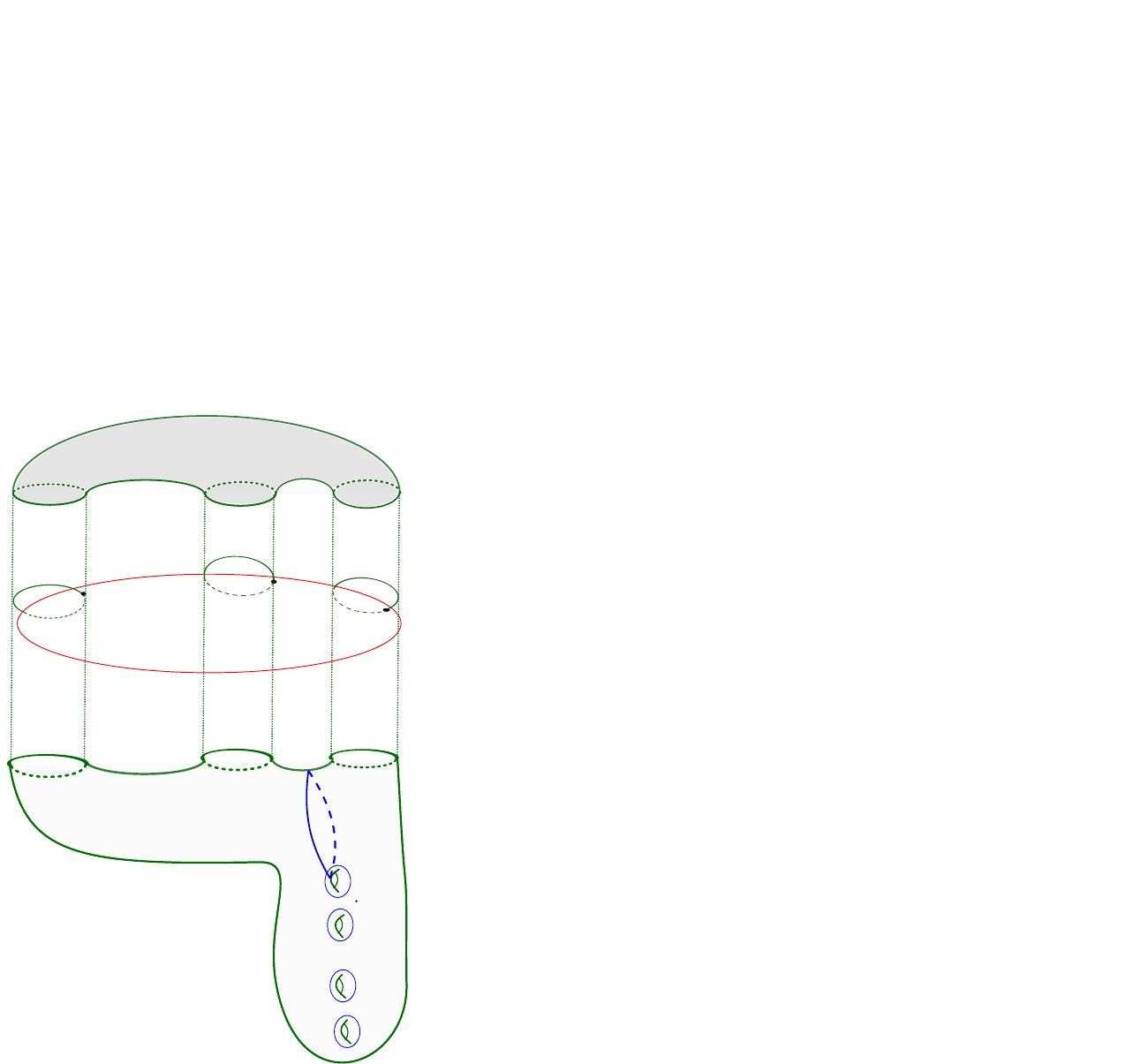
					\caption{}
					\label{figbeta1g}
					
				\end{figure}

				\noindent To induce the map $\beta_1$, we isotope the trivial embedding of $\Sigma_g$ to an embedding as shown on the left of figure \ref{figbeta1g}. Here, the upper shaded region of the surface is contained in the level $\s^3 \times \{-1\}$ and the lower shaded region is contained in the level $\s^3 \times \{2\}$. The middle portion is given by $c_1 \times [-1,2] \sqcup c_3 \times [-1,2] \sqcup b_4 \times [-1,2]$ in $\s^3 \times [-1,2]$. Now consider a trivial open book on $\s^3 \times \{t\}$ ($t \in [0,1]$) such that the curves $c_1 \times \{t\}, c_3 \times \{t\}$ and $b_4 \times \{t\}$ are orbits of three disjoint points in the interior of a $2$-disk page $D$, under the flow of the open book. The binding $\partial D$ is denoted by the curve. Clearly, applying $\textrm{T}_{\s^3}$ (supported on $\s^3 \times [0,1]$) then induces $\beta_1$ on the embedded surface. A similar embedding for inducing $\beta_g$ is shown on the left of figure \ref{figbeta1g}.

				\begin{figure}[htbp] 
					
					\centering
					\def\svgwidth{12cm}
					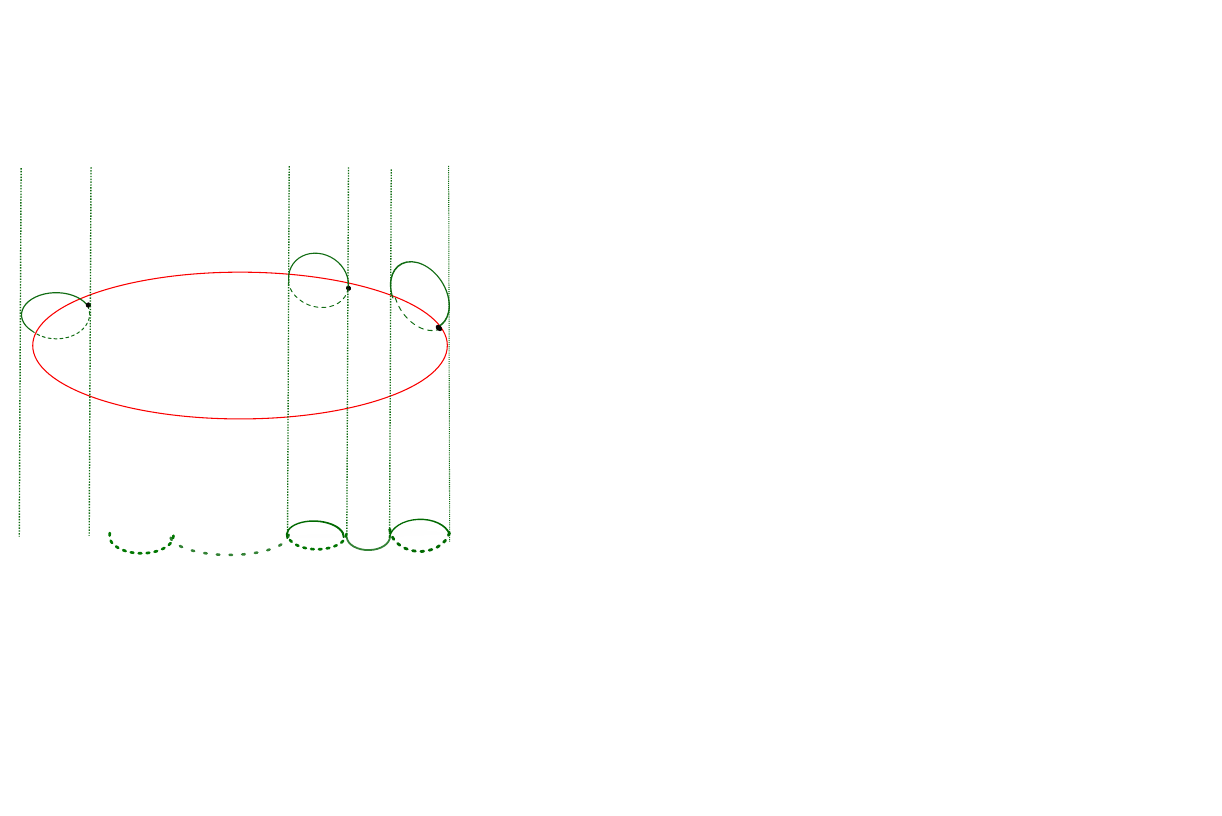
					\caption{}
					\label{figzj}
					
				\end{figure}
				
				\noindent To induce the map $\zeta$, we isotope the trivial embedding of $\Sigma_g$ to an embedding as shown on the left of figure \ref{figzj}. In a similar fashion, the upper (lower) shaded region of $\Sigma_g$ is contained in the level $\s^3 \times \{-1\}$ ($\s^3 \times \{2\}$), and the middle part consists of disjoint union of cylinders : $(b_4 \sqcup c_5 \sqcup c_7 \sqcup \dots \sqcup c_{2g-1} \sqcup c_{2g+1}) \times [-2,1]$. We again arrange these curves so that they become orbits of $g-1$ disjoint points in the interior of a $2$-disk page $D$, under the flow of the open book. Therefore, applying $\textrm{T}_{\s^3}$ will induce $\zeta$ on the embedded surface.

				\noindent Finally, to induces $\beta_j$ ($2 \leq j \leq g-1$), we isotope the trivial embedding of $\Sigma_g$ to an embedding as shown on the right of figure \ref{figzj}
				
				\vspace{0.1cm}
				
				\item[(b)] By statement $(a)$ above, $G \subset SP_g[q_{st}]$. Thus, we need to show that $SP_g[q_{st}] \subset G$. It is enough to show that $ X_i, Y_{2j}, D_i, DB_{2j} \in G$. Note that $X_{2i+1}=\alpha_i \ (1\leq \i \leq g )$, $DB_{2k}=\gamma_k(2\leq g-1)$ and $\tau^2_{c_1}=\gamma_1 $ are already elements in $G$. 
				
				\begin{align*}\mathrm{For ~2 \leq i \leq g-1,~}
					D_{2i+1} & = \tau^2_{c_{2i+1}}\\
					& = \tau^2_{c_{2i+1}} \circ (\tau_{b_{2i}}\circ\tau_{b_{2i+2}} \circ \tau^{-1}_{b_{2i+2}}\circ \tau^{-1}_{b_{2i}})^2\\
					& = (\tau_{b_{2i}}\circ\tau_{c_{2i+1}} \circ \tau_{b_{2i+2}})^2\circ \tau^{-2}_{b_{2i}}\circ \tau^{-2}_{b_{2i+2}} \in G.
				\end{align*}
				
				For $i=1,$	$D_3=\tau^2_{c_3}=(\tau_{c_1} \circ \tau_{c_3} \circ \tau_{b_4})^2 \circ \tau^{-2}_{c_1}\circ \tau^{-2}_{b_4} \in G.$ 
				\begin{align*}
					\mathrm{For ~i=g,~}
					D_{2g+1} & = \tau^2_{c_{2g+1}}\\
					& = \tau^2_{c_{2g+1}}\circ (\tau_{b_{2g-2}}\circ \tau_{c_{2g-1}})^2 \circ (\tau_{b_{2g-2}}\circ \tau_{c_{2g-1}})^{-2}\\
					& = (\tau_{b_{2g-2}}\circ \tau_{c_{2g-1}}\circ \tau_{c_{2g+1}})^2\circ \tau^{-2}_{c_{2g-1}}\circ \tau^{-2}_{b_{2g-2}} \in G.
				\end{align*}

				\begin{align*} \mathrm{For ~2 \leq j \leq g-1,~}
					Y_{2j} & = \tau_{c_{2j}} \circ \tau_{b_{2j}} \circ \tau^{-1}_{c_{2j}}\\
					\begin{split}
						& = (\tau_{c_{2j+1}} \circ \tau_{b_{2j}}\circ \tau_{b_{2j+2}})^{-1} \circ (\tau_{c_{2j+1}} \circ \tau_{b_{2j}}\circ \tau_{b_{2j+2}}) \circ (\tau_{c_{2j}} \circ \tau_{b_{2j}} \circ \tau^{-1}_{c_{2j}}) \circ\\
						& \qquad (\tau_{c_{2j+1}} \circ \tau_{b_{2j}}\circ \tau_{b_{2j+2}})^{-1} \circ (\tau_{c_{2j+1} }\circ \tau_{b_{2j}}\circ \tau_{b_{2j+2}})\\
						& = (\tau_{c_{2j+1}} \circ \tau_{b_{2j}}\circ \tau_{b_{2j+2}})^{-1} \circ \tau_{b_{2j+2}}\circ \tau_{c_{2j+1}} \circ (\tau_{b_{2j}}\circ \tau_{c_{2j}} \circ \tau_{b_{2j}} \circ \tau^{-1}_{c_{2j}} \circ\tau^{-1}_{b_{2j}})\\
						& \qquad \circ \tau^{-1}_{c_{2j+1}} \circ \tau^{-1}_{b_{2j+2}}(\tau_{c_{2j+1} }\circ \tau_{b_{2j}}\circ \tau_{b_{2j+2}})\\
						& = (\tau_{c_{2j+1}} \circ \tau_{b_{2j}}\circ \tau_{b_{2j+2}})^{-1} \circ \tau_{b_{2j+2}}\circ \tau_{c_{2j+1}} \circ (\tau_{c_{2j}}) \circ \tau^{-1}_{c_{2j+1}} \circ  \tau^{-1}_{b_{2j+2}}\circ \\
						& \quad (\tau_{c_{2j+1} }\circ \tau_{b_{2j}}\circ \tau_{b_{2j+2}})\\
						& = (\tau_{c_{2j+1}} \circ \tau_{b_{2j}}\circ \tau_{b_{2j+2}})^{-1}\circ (\tau_{c_{2j+1}}\circ \tau_{c_{2j}}\circ \tau^{-1}_{c_{2j+1}})\circ(\tau_{c_{2j+1} }\circ \tau_{b_{2j}}\circ \tau_{b_{2j+2}})\in G\\
					\end{split}
				\end{align*}	
				\noindent Since $b_{2i}$ and $c_{2i}$ intersect only at one point,
				\begin{equation}\label{eq1}
					\tau_{b_{2i}}=\tau_{c_{2i}} \circ \tau_{b_{2i}}\circ \tau_{c_{2i}}\circ \tau^{-1}_{b_{2i}}\circ \tau^{-1}_{c_{2i}}.
				\end{equation}
				(\ref{eq1}) gives $\tau_{b_{2i}}\circ \tau_{c_{2i}}=\tau^{-1}_{c_{2i}} \circ \tau_{b_{2i}}\circ \tau_{c_{2i}}\circ \tau_{b_{2i}} (2\leq i \leq g-1).$
				\noindent Now 
				\begin{align*}
					\tau^{-1}_{c_{2i}}\circ \tau_{b_{2i}}\circ \tau_{c_{2i}} & = \tau^{-1}_{c_{2i}}\circ\tau^{-1}_{c_{2i}} \circ \tau_{b_{2i}}\circ \tau_{c_{2i}}\circ \tau_{b_{2i}}\\
					& =\tau^{-2}_{c_{2i}} \circ (\tau_{b_{2i}}\circ \tau_{c_{2i}}\circ \tau_{b_{2i}})\\
					& =\tau^{-2}_{c_{2i}} \circ (\tau_{b_{2i}}\circ \tau_{c_{2i}}\circ \tau^{-1}_{b_{2i}}) \circ \tau^2_{b_{2i}}.\\
				\end{align*}
				\noindent Thus for $(2 \leq i \leq g-1),$ $\tau^2_{c_{2i}}= (\tau_{b_{2i}}\circ \tau_{c_{2i}}\circ \tau^{-1}_{b_{2i}}) \circ \tau^2_{b_{2i}}\circ (\tau^{-1}_{c_{2i}}\circ \tau^{-1}_{b_{2i}}\circ \tau_{c_{2i}})$. 
				From (\ref{eq1}), $\tau^{-1}_{c_{2i}}\circ \tau^{-1}_{b_{2i}}\circ \tau_{c_{2i}}= \tau_{b_{2i}}\circ \tau^{-1}_{c_{2i}}\circ \tau^{-1}_{b_{2i}}=(\tau_{b_{2i}}\circ \tau_{c_{2i}}\circ \tau^{-1}_{b_{2i}})^{-1}.$ Therefore, to show that $D_{2i}= \tau^2_{c_{2i}} \in G$ it is enough to show that $\tau_{b_{2i}}\circ \tau_{c_{2i}}\circ \tau^{-1}_{b_{2i}} \in G$. 
				
				\begin{align*}
					\tau_{b_{2i}}\circ \tau_{c_{2i}}\circ \tau^{-1}_{b_{2i}} & = \tau_{b_{2i}}\circ \tau_{c_{2i}}\circ \tau^{-1}_{b_{2i}}\\
					& = \tau_{b_{2i}}\circ (\tau_{b_{2i+2}} \circ \tau^{-1}_{c_{2i+1}}\circ\tau_{c_{2i+1}} )\circ\tau_{c_{2i}}\circ(\tau^{-1}_{c_{2i+1}}\circ\tau_{c_{2i+1}} \circ\tau^{-1}_{b_{2i+2}})\circ \tau^{-1}_{b_{2i}}\\
					& = \tau_{b_{2i}}\circ \tau_{b_{2i+2}} \circ \tau^{-1}_{c_{2i+1}}\circ(\tau_{c_{2i+1}} \circ\tau_{c_{2i}}\circ\tau^{-1}_{c_{2i+1}})\circ\tau_{c_{2i+1}} \circ\tau^{-1}_{b_{2i+2}}\circ \tau^{-1}_{b_{2i}}\\
					\begin{split}
					& = (\tau_{b_{2i}}\circ \tau_{b_{2i+2}} \circ\tau_{c_{2i+1}})\circ \tau^{-2}_{c_{2i+1}}\circ(\tau_{c_{2i+1}} \circ\tau_{c_{2i}}\circ\tau^{-1}_{c_{2i+1}})\circ\tau^2_{c_{2i+1}} \circ\\
					& \quad (\tau^{-1}_{c_{2i+1}}\circ\tau^{-1}_{b_{2i+2}}\circ \tau^{-1}_{b_{2i}})\in G
					\end{split}
					\end{align*}
	\noindent Hence, for $(2 \leq i \leq g-1),~ D_{2i} \in G.$ 
				\begin{align*}
					\mathrm{For ~i=1,~}
					D_2 & = \tau^2_{c_2}\\
					& = \tau_{c_2} \circ (\tau_{c_1}\circ \tau_{c_2} \circ \tau^{-1}_{c_2} \circ \tau^{-1}_{c_1})\circ \tau_{c_2}\\
					& = (\tau_{c_2} \circ \tau_{c_1}\circ \tau_{c_2}) \circ (\tau^{-1}_{c_2} \circ \tau^{-1}_{c_1})\circ \tau_{c_2})\\
					& = (\tau_{c_1} \circ \tau_{c_2}\circ \tau_{c_1}) \circ (\tau^{-1}_{c_2} \circ \tau^{-1}_{c_1})\circ \tau_{c_2})\\
					& = (\tau_{c_1} \circ \tau_{c_2}\circ \tau^{-1}_{c_1}) \circ \tau^2_{c_1}\circ (\tau^{-1}_{c_2} \circ \tau^{-1}_{c_1}\circ \tau_{c_2})\\
					& = (\tau_{c_1} \circ \tau_{c_2}\circ \tau^{-1}_{c_1}) \circ \tau^2_{c_1}\circ (\tau_{c_1} \circ \tau_{c_2}\circ \tau^{-1}_{c_1})^{-1}\\
				\end{align*}
				
				\noindent Since, $\tau_{c_1} \circ \tau_{c_2}\circ \tau^{-1}_{c_1}=(\tau_{c_1} \circ \tau_{c_3} \circ \tau_{b_4}) \circ \tau^{-2}_{c_3}\circ (\tau_{c_3} \circ \tau_{c_2}\circ \tau^{-1}_{c_3})\circ (\tau^{-1}_{c_1}\circ \tau^{-1}_{c_3} \circ \tau^{-1}_{b_4}) \circ \tau^2_{c_3} \in G$, $D_2$ is in $G.$
				
				\begin{align*}
					\mathrm{For~ i=g,~}
					D_{2g} & = \tau^2_{c_{2g}}\\
					& =  \tau^2_{c_{2g}}\circ (\tau_{c_{2g+1}} \circ \tau_{c_{2g}})\circ (\tau_{c_{2g+1}} \circ \tau_{c_{2g}})^{-1}\\
					& =  \tau_{c_{2g}}\circ (\tau_{c_{2g}}\circ \tau_{c_{2g+1}} \circ \tau_{c_{2g}})\circ (\tau_{c_{2g+1}} \circ \tau_{c_{2g}})^{-1}\\
					& =  \tau_{c_{2g}}\circ (\tau_{c_{2g+1}}\circ \tau_{c_{2g}} \circ \tau_{c_{2g+1}})\circ (\tau_{c_{2g+1}} \circ \tau_{c_{2g}})^{-1}\\
					& =  \tau_{c_{2g}}\circ (\tau_{c_{2g+1}}\circ \tau_{c_{2g}} \circ \tau_{c_{2g+1}})\circ (\tau_{c_{2g+1}} \circ \tau_{c_{2g}})^{-1}\\
					& =  \tau_{c_{2g}}\circ \tau_{c_{2g+1}}\circ \tau_{c_{2g}} \circ \tau_{c_{2g+1}}\circ \tau^{-1}_{c_{2g}} \circ \tau^{-1}_{c_{2g+1}}\\
					& =  (\tau_{c_{2g+1}}\circ \tau_{c_{2g}}\circ \tau_{c_{2g+1}}) \circ \tau_{c_{2g+1}}\circ \tau^{-1}_{c_{2g}} \circ \tau^{-1}_{c_{2g+1}}\\
					& =  (\tau_{c_{2g+1}}\circ \tau_{c_{2g}}\circ \tau^{-1}_{c_{2g+1}}) \circ\tau^2_{c_{2g+1}}  \circ (\tau_{c_{2g+1}}\circ \tau^{-1}_{c_{2g}} \circ \tau^{-1}_{c_{2g+1}}) \in G\\
				\end{align*}
				
				\begin{align*}
					\tau_{c_{2i}}\circ \tau_{c_{2i-1}} \circ \tau^{-1}_{c_{2i}} & =\tau_{b_{2i}} \circ \tau_{c_{2i}}\circ \tau_{b_{2i}}\circ \tau^{-1}_{c_{2i}}\circ \tau^{-1}_{b_{2i}}\circ \tau_{c_{2i-1}} \circ \tau^{-1}_{c_{2i}}\\ 
					& =(\tau_{b_{2i}} \circ \tau_{c_{2i}}\circ \tau_{b_{2i}})\circ \tau^{-1}_{c_{2i}}\circ \tau_{c_{2i-1}} \circ \tau^{-1}_{b_{2i}}\circ \tau^{-1}_{c_{2i}}\\
					& =(\tau_{c_{2i}} \circ \tau_{b_{2i}}\circ \tau_{c_{2i}})\circ \tau^{-1}_{c_{2i}}\circ \tau_{c_{2i-1}} \circ \tau^{-1}_{b_{2i}}\circ \tau^{-1}_{c_{2i}}\\
					& =(\tau_{c_{2i}} \circ \tau_{b_{2i}}\circ \tau^{-1}_{c_{2i}})\circ \tau_{c_{2i}}\circ \tau_{c_{2i-1}} \circ \tau^{-1}_{b_{2i}}\circ \tau^{-1}_{c_{2i}}\\
					& =(\tau_{c_{2i}} \circ \tau_{b_{2i}}\circ \tau^{-1}_{c_{2i}})\circ \tau_{c_{2i}}\circ (\tau_{c_{2i}} \circ \tau_{c_{2i-1}} \circ  \tau_{c_{2i}} \circ  \tau^{-1}_{c_{2i-1}}\circ \tau^{-1}_{c_{2i}} )\circ \tau^{-1}_{b_{2i}}\circ \tau^{-1}_{c_{2i}}\\
					& =(\tau_{c_{2i}} \circ \tau_{b_{2i}}\circ \tau^{-1}_{c_{2i}})\circ \tau^2_{c_{2i}}\circ (\tau_{c_{2i-1}} \circ  \tau_{c_{2i}} \circ  \tau^{-1}_{c_{2i-1}})\circ (\tau^{-1}_{c_{2i}} \circ \tau^{-1}_{b_{2i}}\circ \tau^{-1}_{c_{2i}})\\
					& =(\tau_{c_{2i}} \circ \tau_{b_{2i}}\circ \tau^{-1}_{c_{2i}})\circ \tau^2_{c_{2i}}\circ (\tau_{c_{2i-1}} \circ  \tau_{c_{2i}} \circ  \tau^{-1}_{c_{2i-1}})\circ \tau^{-2}_{c_{2i}} \circ(\tau_{c_{2i}} \tau^{-1}_{b_{2i}}\circ \tau^{-1}_{c_{2i}})\\
				\end{align*}
				\begin{align*}
					\tau_{c_{2i-1}} \circ  \tau_{c_{2i}} \circ  \tau^{-1}_{c_{2i-1}} & = \tau_{c_{2i-1}} \circ (\tau_{b_{2i-2}}\circ\tau^{-1}_{b_{2i}} \circ \tau_{b_{2i}} ) \circ \tau_{c_{2i}} \circ (\tau^{-1}_{b_{2i}} \circ \tau_{b_{2i}} \circ\tau^{-1}_{b_{2i-2}})\circ \tau^{-1}_{c_{2i-1}} \\
					\begin{split}
					& = (\tau_{c_{2i-1}} \circ \tau_{b_{2i-2}}\circ \tau_{b_{2i}})\circ\tau^{-2}_{b_{2i}} \circ( \tau_{b_{2i}}  \circ \tau_{c_{2i}} \circ \tau^{-1}_{b_{2i}}) \circ \tau^2_{b_{2i}} \circ\\
					& \quad (\tau^{-1}_{b_{2i-2}}\circ \tau^{-1}_{c_{2i-1}}\circ \tau^{-1}_{b_{2i}}) \\
					& = (\tau_{c_{2i-1}} \circ \tau_{b_{2i-2}}\circ \tau_{b_{2i}})\circ\tau^{-2}_{b_{2i}} \circ( \tau_{b_{2i}}  \circ \tau_{c_{2i}} \circ \tau^{-1}_{b_{2i}}) \circ \tau^2_{b_{2i}} \circ\\
					& \quad (\tau_{c_{2i-1}} \circ\tau_{b_{2i-2}}\circ \tau_{b_{2i}})^{-1} \in G
					\end{split}
					\end{align*}

			\end{enumerate}

\end{proof}

	\noindent As mentioned in section \ref{sec5}, Hamenst\"{a}dt \cite{ursula} has given a set of $2g+1$ generators for the group $SP_g$. It will be interesting to know an answer to the following question.
	
	\begin{question}
		Can one construct an embedding $f : \Sigma_g \rightarrow \s^4$ such that the subgroup of $f$-extendible mapping classes $SP_g[q_f]$ is isomorphic to $SP_g$, and $SP_g[q_f]$ is generated by a set of $(2g+1)$ $f$-extendible elements?
	\end{question}

\end{document}